\newcommand{\D}{\displaystyle}
\newcommand{\bx}{\mathbf{x}}
\newcommand{\bn}{\mathbf{n}}
\newcommand{\by}{\mathbf{y}}
\newcommand{\p}{\partial}
\newcommand{\al}{\alpha}
\newcommand{\rhkpipj}{\bar{R}_t({\bf p}_i, {\bf p}_j)}
\newcommand{\bfu}{{\bf u}}
\newcommand{\bfp}{{\bf p}}
\newcommand{\M}{{\mathcal M}}
\newcommand{\V}{{\mathcal V}}
\newcommand{\bV}{\mathbf{V}}
\newcommand{\mathd}{\mathrm{d}}
\newtheorem{theorem}{\textbf{Theorem}}[section]
\newtheorem{lemma}{\textbf{Lemma}}[section]
\newtheorem{proposition}{\textbf{Proposition}}[section]
\newtheorem{assumption}{\textbf{Assumption}}[section]
\newtheorem{corollary}{\textbf{Corollary}}[section]
\newcommand{\R}{\mathbb{R}}
\numberwithin{equation}{section}
\begin{document}

\title{Enforce the Dirichlet boundary condition by volume constraint in Point Integral method}

\author{
Zuoqiang Shi%
\thanks{Mathematical Sciences Center, Tsinghua University, Beijing, China,
100084. \textit{Email: zqshi@math.tsinghua.edu.cn.}%
}
}

 \maketitle

\begin{abstract}
Recently, Shi and Sun proposed Point Integral method (PIM) to discretize Laplace-Beltrami operator on point cloud \cite{LSS,SS14}. 
In PIM, Neumann boundary is nature, but Dirichlet boundary needs some special treatment. In our previous work, we use 
Robin boundary to approximate Dirichlet boundary. In this paper, we introduce another approach to deal with the Dirichlet boundary condition 
in point integral method
using the volume constraint proposed by Du et.al. \cite{Du-SIAM}.  
\end{abstract}


\section{Introduction}
\label{sec:intro}

Partial differential equations on manifold appear in
a wide range of applications such as material science \cite{CFP97,EE08}, fluid flow \cite{GT09,JL04}, 
biology and biophysics \cite{BEM11,ES10,NMWI11,WD08} and machine learning and data analysis \cite{belkin2003led, Coifman05geometricdiffusions}. 
Due to the complicate geometrical structure of the manifold, it is very chanlleging to 
solve PDEs on manifold.
In recent years, it attracts more and more attentions to develop efficient numerical method to solve 
PDEs on manifold. In case of that the manifold is a 2D surface embedding in $\mathbb{R}^3$, 
many methods were proposed include level set methods \cite{AS03,XZ03},
surface finite elements \cite{DE-Acta}, finite volume methods \cite{LNR11}, diffuse interface methods \cite{ESSW11} and local mesh methods \cite{Lai13}.

In this paper, we focus on following Poisson equation with Dirichlet boundary condition
\begin{align} \label{eq:dirichlet}
\left \{
\begin{array}{rlll}
    -\Delta_{\mathcal{M}} u(\bx)&=& f(\bx), \quad & \bx \in \mathcal{M} \\
    u(\bx) & = & 0,\quad & \bx \in \p\mathcal{M} \\
\end{array}
\right.
\end{align}
where $\M$ is a smooth manifold isometrically embedded in $\R^d$ with the standard Euclidean metric 
and $\p\M$ is the boundary. $\Delta_\mathcal{M}$ is the Laplace-Beltrami operator on manifold $\mathcal{M}$.  
Let $g$ be the Riemannian metric tensor of $\mathcal{M}$. 
Given a local coordinate system $(x^1, x^2, \cdots, x^k)$,
the metric tensor $g$ can be represented by a matrix $[g_{ij}]_{k\times k}$,
\begin{eqnarray}
  g_{ij}=<\frac{\p}{\p x^i},\frac{\p}{\p x^j}>,\quad i,j=1,\cdots,k.\nonumber
\end{eqnarray}
Let $[g^{ij}]_{k\times k}$ is the inverse matrix of $[g_{ij}]_{k\times k}$, then it is well known that
the Laplace-Beltrami operator is
\begin{equation}
\Delta_{\mathcal{M}}  = \frac{1}{\sqrt{\det g}}\frac{\p}{\p x^i}(g^{ij}\sqrt{\det g} \frac{\p}{\p x^j}).\nonumber
\end{equation}
In this paper, the metric tensor $g$ is assumed to be inherited from the ambient space $\R^d$, that is, 
$\M$ isometrically embedded in $\R^d$ with the standard Euclidean metric.
If $ \mathcal{M}$ is an open set in $\R^d$, 
then $\Delta_\mathcal{M}$ becomes standard Laplace operator, i.e., $\Delta_{ \mathcal{M}} = \sum_{i=1}^d \frac{\p^2 }{\p {x^i}^2}$.

In our previous papers, \cite{LSS,SS14},  Point Integral method was developed to solve Poisson equation in point   
cloud. The main observation of the Point Integral method is that the solution of the Poisson equation can be approximated
by an integral equation,
\begin{equation}
  \label{eq:integral-pim}
  \frac{1}{t}\int_{\M}R_t(\bx,\by)(u(\bx)-u(\by))\mathd\by-2\int_{\p\M}\bar{R}_{t}(\bx,\by)\frac{\p u}{\p \bn}(\by)\mathd\mu_\by=
\int_{\M}\bar{R}_t(\bx,\by)f(\by)\mathd\by
\end{equation}
where $\bn$ is the out normal of $\M$ at $\p\M$. The kernel functions
\begin{eqnarray}
  \label{eq:kernel}
 R_t(\bx,\by)=\frac{1}{(4\pi t)^{k/2}}R\left(\frac{\|\bx-\by\|^2}{4t}\right),\quad  \bar{R}_t(\bx,\by)=\frac{1}{(4\pi t)^{k/2}}\bar{R}\left(\frac{\|\bx-\by\|^2}{4t}\right)
\end{eqnarray}
and $\bar{R}(r)=\int_{r}^{+\infty}R(s)\mathd s$. $t$ is a parameter, which is determined by the desensity of the point cloud in the real computations. 

The kernel function $R(r): \R^+ \rightarrow \R^+ $ is assumed to be $C^2$ smooth and satisfies some mild conditions (see Section \ref{sec:PIM}).

The integral approximation \eqref{eq:integral-pim} is natural to solve the Poisson equation with Neumann boundary condition. To enforce the Dirichlet boundary 
condition, in our previous work \cite{LSS,SS14}, we used Robin boundary condition to approximate the Dirichlet boundary condition. More specifically, we solve 
following problem instead of \eqref{eq:dirichlet} with $0<\beta\ll 1$,
\begin{align} \label{eq:robin}
\left \{
\begin{array}{rlll}
    -\Delta_{\mathcal{M}} u(\bx)&=& f(\bx), \quad & \bx \in \mathcal{M}, \\
    u(\bx)+\beta\frac{\p u}{\p \bn} & = & 0,\quad & \bx \in \p\mathcal{M}. \\
\end{array}
\right.
\end{align}
Using \eqref{eq:integral-pim}, we have an integral equation to approximate the above Robin problem,
\begin{equation}
  \label{eq:integral-pim-robin}
  \frac{1}{t}\int_{\M}R_t(\bx,\by)(u(\bx)-u(\by))\mathd\by+\frac{2}{\beta}\int_{\p\M}\bar{R}_{t}(\bx,\by)u(\by)\mathd\mu_\by=
\int_{\M}\bar{R}_t(\bx,\by)f(\by)\mathd\by.
\end{equation}
We can prove that this approach converge to the original Dirichlet problem \cite{SS14}. In the real computations, small $\beta$ may give some trouble. 
The overcome this problem, we also introduced an itegrative method to enforce the Dirichlet boundary condition based on the Augmented Lagrangian Multiplier 
(ALM) method. However, we can not prove the convergence of this iterative method, although it always converges in the numerical tests.

Recently, Du et.al. \cite{Du-SIAM} proposed volume constraint to deal with the boundary condition in the nonlocal diffusion problem. They found that in the 
nonlocal diffusion problem, 
since the operator is nonlocal, only enforce the boundary condition on the boundary is not enough, we have to extend the boundary condition to 
a small region close to the boundary. 
Borrowing this idea, in nonlocal diffusion problem  to handle the Dirichlet boundary. This idea gives us following integral equation with volume constraint:
\begin{align} \label{eq:integral}
\left \{
\begin{array}{rlll}
 \D   \frac{1}{t}\int_{\M}R_t(\bx,\by)(u(\bx)-u(\by))\mathd\by&=&\D\int_{\M}\bar{R}_t(\bx,\by)f(\by)\mathd\by, 
\quad & \bx \in \mathcal{M}'_t \\
    u(\bx) & = & 0,\quad & \bx \in \mathcal{V}_t \\
\end{array}
\right.
\end{align}
Here, $\M'_t$ and $\V_t$ are subsets of $\M$ which are defined as
\begin{eqnarray}
  \label{eq:domain}
  \M'_t=\left\{\bx\in \M: B\left(\bx,2\sqrt{t}\right)\cap\p\M=\emptyset\right\},\quad \mathcal{V}_t=\M\backslash\M'_t.
\end{eqnarray}
The thickness of $\mathcal{V}_t$ is $2\sqrt{t}$ which implies that $|\mathcal{V}_t|=O(\sqrt{t})$. The relation of $\M$, $\p\M$, $\M'_t$ and $\V_t$ are sketched in 
Fig. \ref{fig:domain}.
\begin{figure}
  \centering
  \includegraphics[width=0.6\textwidth]{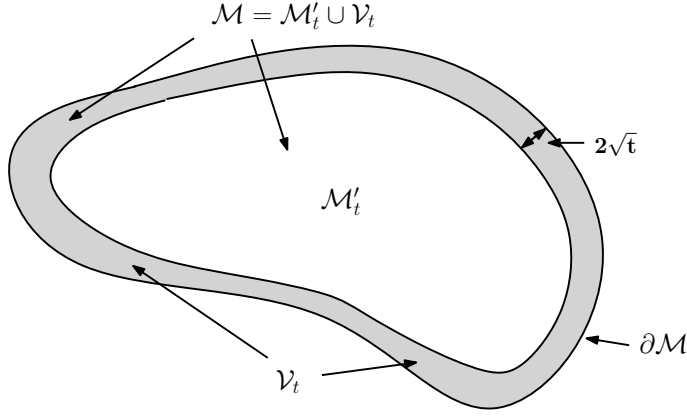}
  \caption{Computational domain for volume constraint}
  \label{fig:domain}
\end{figure}

The main advantage of the integral equation \eqref{eq:integral} is that there is not any differential operator in the integral equation. Then it is easy 
to discretized on point cloud. 
Assume we are given
a set of sample points $P=\{\bfp_i:\bfp_i\in \M,\;i=1,\cdots,n\}$ sampling the submanifold $\M$ and
one vector $\bV = (V_1, \cdots, V_n)^t$ where $V_i$ is the volume weight of $\bfp_i$ in $\M$. In addition, we assume that the 
point set $P$ is a good sample of manifold $\M$ in the sense that the integral on $\M$ can be well approximated by the summation over $P$, see
Section \ref{sec:PIM}. 

Then, \eqref{eq:integral} can be easily discretized to get following linear system
\begin{eqnarray}
  \label{eq:dis}
\left\{\begin{array}{rclc}
  \D\frac{1}{t}\sum_{\bfp_j\in\M}R_t(\bfp_i,\bfp_j)(u_i-u_j)V_j
&=&\D\sum_{\bfp_j\in \M}\bar{R}_t(\bfp_i,\bfp_j)f(\bfp_j)V_j,&  
\bfp_i \in \mathcal{M}'_t,\\ 
u_i&=&0,&\bfp_i\in \V_t.
\end{array}
\right.
\end{eqnarray}
This is the discretization of the Poisson equation \eqref{eq:dirichlet} given by Point Integral Method with volume constraint on point cloud. 

Similarly, the eigenvalue problem
\begin{align} \label{eq:eigen}
\left \{
\begin{array}{rlll}
    -\Delta_{\mathcal{M}} u(\bx)&=& \lambda u, \quad & \bx \in \mathcal{M} \\
    u(\bx) & = & 0,\quad & \bx \in \p\mathcal{M} \\
\end{array}
\right.
\end{align}
can be approximated by an integral eigenvalue problem
\begin{align} \label{eq:integral-eigen}
\left \{
\begin{array}{rlll}
 \D   \frac{1}{t}\int_{\M}R_t(\bx,\by)(u(\bx)-u(\by))\mathd\by&=&\D\lambda \int_{\M}\bar{R}_t(\bx,\by)u(\by)\mathd\by, 
\quad & \bx \in \mathcal{M}'_t \\
    u(\bx) & = & 0,\quad & \bx \in \mathcal{V}_t \\
\end{array}
\right.
\end{align}
And corresponding discretization is given as following
\begin{eqnarray}
  \label{eq:dis-eigen}
\quad\quad\quad\left\{\begin{array}{rclc}
  \D\frac{1}{t}\sum_{\bfp_j\in\M}R_t(\bfp_i,\bfp_j)(u_i-u_j)V_j
&=&\D\lambda \sum_{\bfp_j\in \M}\bar{R}_t(\bfp_i,\bfp_j)u_jV_j,&  
\bfp_i \in \mathcal{M}'_t,\\ 
u_i&=&0,&\bfp_i\in \V_t.
\end{array}
\right.
\end{eqnarray}

\subsection{Assumptions and main results}
\label{sec:PIM}
One of the main contribution of this paper is that, under some assumptions, 
we prove that the solution of the discrete system \eqref{eq:dis} converges to the solution of the Poisson equation 
\eqref{eq:dirichlet} and the spectra of the eigen problem \eqref{eq:dis-eigen} converge to the spectra of the Laplace-Beltrami operator with 
Dirichlet boundary \eqref{eq:eigen}. 

The assumptions we used are listed as following.
\begin{assumption} 
\begin{itemize}
\item Assumptions on the manifold: $\M, \p\M$ are both compact and $C^\infty$ smooth.
\item Assumptions on the sample points $(P,\mathbf{V})$: $(P,\mathbf{V})$ is $h$-integrable approximation of $\M$, i.e.
  \begin{itemize}
  \item[]  For any function $f\in C^1(\M)$, there is a constant $C$ 
independent of $h$ and $f$ so that
\begin{equation*}
\left|\int_\M f(\by) \mathd\by - \sum_{\bfp_i\in\M} f(\bfp_i)V_i\right| < Ch|\text{supp}(f)|\|f\|_{C^1(\M)}.
\end{equation*}
  \end{itemize}
\item Assumptions on the kernel function $R(r)$:
\begin{itemize}
\item[(a)] $R\in C^2(\mathbb{R}^+)$;
\item[(b)]
$R(r)\ge 0$ and $R(r) = 0$ for $\forall r >1$; 
\item[(c)]
 $\exists \delta_0>0$ so that $R(r)\ge\delta_0$ for $0\le r\le\frac{1}{2}$.
\end{itemize}
\end{itemize}
\end{assumption}
These assumptions are default in this paper and they are omitted in the statement of the theoretical results. And in the analysis, we always assume that 
$t$ and $h/\sqrt{t}$ are small enough. Here, "small enough" means that they are less than a generic constant which only depends on $\M$.

Under above assumptions, we have two theorems regarding the convergence of the Poisson equation and corresponding eigenvaule problem.
\begin{theorem}
  \label{thm:converge}
Let $u(\bx)$ be solution of \eqref{eq:dirichlet} and $\bfu=[u_1,\cdots,u_n]^t$ be solution of \eqref{eq:dis} and $f\in C^1(\M)$ in both problems. 
There exists $C>0$ only depends on $\M$ and $\p\M$, such that
\begin{eqnarray*}
  \|u-u_{t,h}\|_{H^1(\M'_t)}\le C\left(t^{1/4}+\frac{h}{t^{3/2}}\right)\|f\|_{C^1(\M)}
\end{eqnarray*}
where 
\begin{equation}
  \label{eq:interpolation-intro}
  u_{t,h}(\bx)=\left\{\begin{array}{cc}\D
\frac{1}{w_{t,h}(\bx)}\left(\sum_{\bfp_j\in\M}R_t(\bx,\bfp_j)u_jV_j
 +t\sum_{\bfp_j\in\M}\bar{R}_t(\bx,\bfp_j)f(\bfp_j)V_j\right),&\bx\in \M'_t,\\
0,&\bx\in \V_t.
\end{array}\right.
\end{equation}
and $w_{t,h}(\bx)=\sum_{\bfp_j\in \M}R_t(\bx,\bfp_j)V_j$.
\end{theorem}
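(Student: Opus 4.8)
The estimate compares the continuous solution $u$ to the interpolated discrete solution $u_{t,h}$, so I would split the error through an intermediate object: the solution $u_t$ of the *continuous* integral equation \eqref{eq:integral} with volume constraint. The overall error then decomposes as
\begin{equation*}
\|u-u_{t,h}\|_{H^1(\M'_t)}\le \|u-u_t\|_{H^1(\M'_t)}+\|u_t-u_{t,h}\|_{H^1(\M'_t)},
\end{equation*}
and I expect the first term to contribute the $t^{1/4}$ and the second the $h/t^{3/2}$. For the first (consistency of the integral model), I would start from the identity \eqref{eq:integral-pim}, which holds exactly for the true $u$ up to a remainder that is $O(t)$ in $L^2(\M'_t)$ (this is the standard point-integral consistency estimate, using $\Delta_\M u=-f$ and the moment conditions on $R$); on $\M'_t$ the boundary integral in \eqref{eq:integral-pim} vanishes because $\bar R_t(\bx,\by)=0$ once $\|\bx-\by\|\ge 2\sqrt t$ and $\bx\in\M'_t$ is $2\sqrt t$-far from $\p\M$. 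So $u$ satisfies \eqref{eq:integral} on $\M'_t$ with an $O(t)$ right-hand-side perturbation, while $u$ also satisfies $u=0$ on $\p\M$ but only $u=O(\sqrt t)$ on $\V_t$ (since $u$ is Lipschitz and $\V_t$ has thickness $2\sqrt t$). Feeding this mismatched boundary data into a stability estimate for the integral operator yields $\|u-u_t\|_{H^1(\M'_t)}\lesssim (t+\sqrt t\,)^{1/2}\lesssim t^{1/4}$ — the square root loss is exactly why $t^{1/4}$ and not $t^{1/2}$ appears.

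**Stability of the integral equation.** The workhorse is a coercivity/stability bound for the bilinear form
$\langle L_t v,v\rangle=\frac{1}{2t}\int_\M\int_\M R_t(\bx,\by)(v(\bx)-v(\by))^2\,\mathd\bx\,\mathd\by$,
controlled below by $\|v\|_{H^1}^2$ on $\M'_t$ modulo the Dirichlet constraint, using condition (c) on $R$ (which gives a lower bound $R\ge\delta_0$ on a ball of radius $\sqrt{t/2}$) together with a nonlocal Poincaré inequality. This is where the volume constraint is essential: without pinning $v$ to zero on the collar $\V_t$, $L_t$ has a nontrivial kernel (constants), so I would prove a Poincaré-type inequality of the form $\|v\|_{L^2(\M)}\le C\langle L_t v,v\rangle$ valid for $v$ vanishing on $\V_t$, with $C$ independent of $t$. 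From this, the continuous problem \eqref{eq:integral} is well-posed with a stability constant independent of $t$, and a perturbation in the forcing of size $\e$ in the appropriate norm produces an $H^1(\M'_t)$ error $\lesssim\sqrt\e$.

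**Discretization error.** For $\|u_t-u_{t,h}\|_{H^1(\M'_t)}$ I would use the $h$-integrability assumption to control, for fixed smooth test functions, the difference between $\int_\M(\cdots)\mathd\by$ and $\sum_{\bfp_j}(\cdots)V_j$: each replacement of an integral by a Riemann-type sum costs $Ch\|\cdot\|_{C^1}$, and since the integrands involve $R_t$ and $\bar R_t$ whose $C^1$ norms scale like $t^{-k/2-1}$ and whose supports have volume $O(t^{k/2})$, the net consistency error of the quadrature is $O(h/t)$ in the relevant (weak) norm. Running this through the same stability estimate — now a discrete stability estimate, which requires re-deriving the nonlocal Poincaré inequality for the sampled measure, again leaning on the $h$-integrability to compare discrete and continuous energies provided $h/\sqrt t$ is small — gives $\|u_t-u_{t,h}\|_{H^1(\M'_t)}\lesssim (h/t)^{?}$; tracking the powers carefully, and accounting for the extra $t^{-1}$ from the $1/t$ prefactor in \eqref{eq:dis} and a further half-power loss from passing through the square-root stability bound, produces the stated $h/t^{3/2}$. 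The interpolation formula \eqref{eq:interpolation-intro} is precisely the continuous-kernel reconstruction of the discrete data that makes $u_{t,h}$ an admissible competitor in the $H^1$ estimate and lets the gradient be estimated by differentiating $R_t$ under the sum.

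**Main obstacle.** The delicate point is the discrete nonlocal Poincaré inequality with a $t$-independent constant: one must show that the sampled energy $\frac{1}{t}\sum_{i,j}R_t(\bfp_i,\bfp_j)(u_i-u_j)^2V_iV_j$ still dominates $\sum_i u_i^2 V_i$ (for $u$ vanishing on $\V_t$) uniformly as $t\to0$, and that the $H^1(\M'_t)$ norm of the interpolant $u_{t,h}$ is in turn controlled by this discrete energy. Connectivity of the sample at scale $\sqrt t$ — ensured by $h/\sqrt t$ small via the $h$-integrability hypothesis — is what makes this go through, but quantifying the constants so that the final bound is genuinely $t^{1/4}+h/t^{3/2}$ rather than something weaker is the technical heart of the proof.
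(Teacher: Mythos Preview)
Your decomposition through the intermediate solution $u_t$ of \eqref{eq:integral} is exactly the paper's route (Theorems~\ref{thm:converge-integral} and~\ref{thm:converge-dis}), and your reading of the $t^{1/4}$ is correct in spirit: since $u_t$ vanishes on $\V_t$ but $u$ does not, the error $e_t=u-u_t$ equals $u$ on $\V_t$, and the resulting boundary term $\tfrac{1}{t}\int_{\V_t}|u|^2\lesssim \sqrt{t}\,\|f\|_{H^1}^2$ (Lemma~\ref{lem:u-boundary}) enters the energy identity additively; taking the square root of the energy is what produces $t^{1/4}$.

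Your account of the discretization step, however, contains a concrete error. There is \emph{no} square-root stability loss in bounding $\|u_t-u_{t,h}\|_{H^1(\M'_t)}$. Both $u_t$ and $u_{t,h}$ vanish on $\V_t$ by construction, so their difference satisfies the homogeneous volume constraint and the stability estimate of Theorem~\ref{thm:stability} applies \emph{linearly}:
\[
\|u_t-u_{t,h}\|_{H^1(\M'_t)}\le C\|L_t(u_{t,h}-u_t)\|_{L^2(\M'_t)}+Ct\|\nabla L_t(u_{t,h}-u_t)\|_{L^2(\M'_t)}.
\]
The entire $h/t^{3/2}$ therefore comes from the consistency bound $\|L_t(u_{t,h}-u_t)\|_{L^2}\le Ch/t^{3/2}$ (Theorem~\ref{thm:dis_error}), not from a square-root mechanism. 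Your quadrature scaling is also off by half a power: $|\nabla_\bx R_t|\lesssim t^{-k/2-1/2}$ (the chain rule brings down $(\bx-\by)/t$ with $|\bx-\by|\le 2\sqrt t$), so the raw quadrature error for a single $R_t$-integral is $O(h/\sqrt t)$, and the $1/t$ prefactor in $L_t$ then gives $O(h/t^{3/2})$ directly. The discrete Poincar\'e inequality you flag (Lemma~\ref{lem:elliptic_dis}) is indeed needed, but its role is to furnish the a~priori bound $\sum_i u_i^2V_i\le C\|f\|_\infty^2$ (Corollary~\ref{thm:bound_solution_bfu}) that feeds into the consistency estimate, not to supply a nonlinear stability step.
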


\begin{theorem}
Let $\lambda_i$ be the $i$th largest eigenvalue of eigenvalue problem \eqref{eq:eigen}. And
let $\lambda_{i}^{t,h}$ be the $i$th largest eigenvalue of
discrete eigenvalue problem \eqref{eq:dis-eigen}, then there exists a constant $C$ such that
$$|\lambda_i^{t,h} - \lambda_i|\le C \lambda_i^2 \left(t^{1/4}+\frac{h}{t^{d/4+3}}\right) , $$
and there exist another constant $C$ such that, for any $\phi\in E(\lambda_i,T)X$ and $X=H^1(\M'_t)$,
$$\|\phi-E(\sigma_{i}^{t,h},T_{t,h})\phi\|_{H^1(\M'_t)} \le C \left(t^{1/4}+\frac{h}{t^{d/4+2}}\right) .$$
where $\sigma_i^{t,h}=\{\lambda_j^{t,h}\in \sigma(T_{t,h}): j\in I_i\}$ and $I_i=\{j\in \mathbb{N}: \lambda_j=\lambda_i\}$,
$E(\lambda,T)$ is the Riesz spectral projection associated with
$\lambda$.
\label{thm:eigen_converge}
\end{theorem}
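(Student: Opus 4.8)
The plan is to reduce the eigenvalue convergence statement to the source-problem error bound of Theorem~\ref{thm:converge} via the standard framework for collectively compact (or, more precisely, norm-resolvent-type) approximation of compact operators. First I would set up the operator-theoretic picture: let $T:X\to X$ with $X=H^1(\M'_t)$ denote the solution operator of the continuous integral eigenvalue problem \eqref{eq:integral-eigen}, i.e. $u=Tf$ solves the volume-constrained integral equation with right-hand side $\int_\M\bar R_t(\bx,\by)f(\by)\mathd\by$, and let $T_{t,h}$ denote the discrete solution operator defined through \eqref{eq:dis} and the interpolation formula \eqref{eq:interpolation-intro}. Both operators are self-adjoint and compact with respect to a suitable (weighted $L^2$) inner product, so their spectra are real, discrete, and accumulate only at $0$; the eigenvalues $\lambda_i^{t,h}$ of \eqref{eq:dis-eigen} are the reciprocals of the nonzero eigenvalues of $T_{t,h}$, and similarly for the continuous problem. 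The statement about $E(\lambda_i,T)$, $E(\sigma_i^{t,h},T_{t,h})$ and Riesz projections is then exactly the conclusion one gets from perturbation theory of spectral projections (Kato, or Osborn's classical results on spectral approximation) once one controls $\|T-T_{t,h}\|$ in the appropriate operator norm.

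The key quantitative input is to show $\|(T-T_{t,h})f\|_{H^1(\M'_t)}\le C(t^{1/4}+h\,t^{-3/2})\|f\|_{C^1(\M)}$, which is precisely Theorem~\ref{thm:converge} (with $f$ in place of the source term). To convert this into the stated eigenvalue rates one has to be careful about norms: $C^1$ control of $f$ is stronger than $H^1$, so I would interpolate/absorb the loss using elliptic-type regularity for the integral operator — namely that $Tf$ and its discrete analogue map $L^2$ (or $H^1$) into a space with enough smoothness that the $C^1$ norm of an eigenfunction iterate is controlled by powers of $t^{-1}$ and $h/t$. This is where the extra negative powers of $t$ in the theorem (the $t^{d/4+3}$ and $t^{d/4+2}$ exponents, versus $t^{3/2}$ in Theorem~\ref{thm:converge}) come from: each conversion from a weak norm to $C^1$, and each application of the $h$-integrable approximation bound to a function whose $C^1$ norm itself involves derivatives of the kernel $R_t$ (which scale like $t^{-1/2}$ per derivative, and $t^{-d/2}$ from the normalization $\nhk$), costs additional powers of $t$. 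I would track these carefully through: (i) a bound $\|T_{t,h}u\|_{C^1}\le C t^{-\alpha}\|u\|_{L^2}$ for eigenfunction iterates, using that $u=\lambda_i^{t,h}T_{t,h}u$ so $\|u\|_{C^1}\le C\lambda_i^{t,h}t^{-\alpha}\|u\|_{L^2}$, and (ii) the analogous continuous estimate, which together with the factor $\lambda_i$ (appearing once for the eigenvalue bound squared because the eigenvalue error in Osborn's theorem is quadratic in the projection error when $T$ is self-adjoint) produce the $\lambda_i^2$ and the stated $t$-powers.

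The concrete steps, in order, are: (1) fix the functional-analytic setting, verify self-adjointness and compactness of $T$ and $T_{t,h}$ on the weighted space, and state the two ingredients of Osborn's spectral approximation theorem (eigenvalue gap $\sim\|(T-T_{t,h})|_{E(\lambda_i,T)X}\|\cdot\|(T-T_{t,h})^*|_{E(\lambda_i,T)X}\|$ giving the quadratic bound, and eigenprojection gap $\lesssim\|(T-T_{t,h})|_{E(\lambda_i,T)X}\|$); (2) invoke Theorem~\ref{thm:converge} to bound $(T-T_{t,h})$ applied to any $f$, being explicit that the right-hand side of the theorem applies to data in $C^1$; (3) establish the smoothing/regularity estimates bounding $\|\phi\|_{C^1(\M)}$ for $\phi\in E(\lambda_i,T)X$ in terms of $\lambda_i$ and, for the discrete operator, the analogous $C^1$ bound for functions in the range of $E(\sigma_i^{t,h},T_{t,h})$ in terms of negative powers of $t$ and of $h/\sqrt t$; (4) combine (2) and (3), carefully collecting the $t$-powers, to get $\|(T-T_{t,h})|_{E(\lambda_i,T)X}\|\le C\lambda_i(t^{1/4}+h\,t^{-d/4-2})$ and hence both displayed estimates. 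The main obstacle I expect is step (3): getting sharp $C^1$ regularity of the continuous integral-equation solution near the transition region $\partial\M'_t$, and a matching \emph{discrete} regularity estimate for eigenfunction iterates of $T_{t,h}$ — the latter requires showing that summation against $R_t$ and $\bar R_t$ and their gradients, over an $h$-integrable point set, does not blow up faster than the stated $t$-powers, and that the interpolation operator \eqref{eq:interpolation-intro} together with the division by $w_{t,h}$ is well-conditioned on $\M'_t$. Controlling the boundary-layer contributions of $\V_t$ (whose measure is $O(\sqrt t)$) to the $H^1(\M'_t)$ norm is the source of the $t^{1/4}$ term and must be handled with care so that it does not interact badly with the $h$-dependent term.
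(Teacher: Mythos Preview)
Your overall strategy---reduce to Osborn/Kato-type spectral perturbation for compact operators, control $(T-T_{t,h})$ restricted to eigenspaces, and explain the extra negative powers of $t$ via $C^1$ regularity of eigenfunction iterates---is correct and is essentially what the paper does. The paper likewise defers the abstract spectral-approximation machinery to \cite{SS-rate} and concentrates on the operator-norm inputs.

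There is one organizational difference worth noting. You propose to compare the Laplace--Beltrami solution operator and the discrete operator $T_{t,h}$ \emph{directly}, invoking Theorem~\ref{thm:converge} and then bootstrapping $\|\phi\|_{C^1}$ for eigenfunctions to absorb the $C^1$ requirement on the data. The paper instead inserts the intermediate solution operator $T_t$ of the continuous integral equation \eqref{eq:integral} and proves two separate operator-norm estimates: $\|T-T_t\|_{H^1}\le Ct^{1/4}$ (Theorem~\ref{thm:converge_h1}, from Theorem~\ref{thm:converge-integral}) and $\|T_t-T_{t,h}\|_{C^1}\le Ch\,t^{-d/4-2}$ (Theorem~\ref{thm:converge_c1}), together with the boundedness results $\|T_t\|_{H^1}\le C$, $\|T_{t,h}\|_{C^1}\le Ct^{-(d+2)/4}$ (Theorem~\ref{thm:bound}). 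This two-step split isolates the $d$-dependent blow-up entirely in the discretization step $T_t\to T_{t,h}$, where it arises transparently from the kernel normalization $C_t\sim t^{-k/2}$ and the $h$-integrable approximation bound; the $t^{1/4}$ piece lives purely in $H^1$ and never touches $C^1$. Your route would arrive at the same place, but the bookkeeping of which powers of $t$ come from which source is cleaner with the intermediate operator. Also, a small slip in your setup: your $T$ should be the solution operator of the PDE \eqref{eq:dirichlet}, not of the integral problem \eqref{eq:integral-eigen}; the latter is the paper's $T_t$.
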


\section{Stability analysis}
\label{sec:stability}
To prove the convergence, we need some stability results which are listed in this section. 
The first lemma is about the coercivity of the integral operator and the proof can be found in \cite{SS14}. 
\begin{lemma}
\label{lem:coercivity}
 For any function $u\in L^2(\mathcal{M})$, 
there exists a constant $C>0$ only depends on $\M$, such that
  \begin{eqnarray}
    \int_{\mathcal{M}}\int_{\mathcal{M}} R_t(\bx,\by)(u(\bx)-u(\by))^2\mathd\bx\mathd\by \ge C\int_\mathcal{M} |\nabla v|^2\mathd \bx,\nonumber
  \end{eqnarray}
where 
\begin{eqnarray}
v(\bx)=\frac{1}{w_t(\bx)}\int_{\mathcal{M}}R_t(\bx,\by)u(\by)\mathd \by, \nonumber
\end{eqnarray}
and $\D w_t(\bx) = \int_{\mathcal{M}}R_t\left(\bx,\by\right)\mathd \by$.
\end{lemma}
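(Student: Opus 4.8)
The plan is to control the Dirichlet energy of the mollification $v$ by the nonlocal energy of $u$ in three stages: differentiate the defining relation of $v$ to get a pointwise bound on $\nabla v$; re-expand $v$ to reduce to a triple integral involving only differences of $u$; and finally dominate the auxiliary ``gradient kernel'' produced in the first stage by $R_t$ itself, using the lower bound (c). \textbf{Step 1 (a pointwise bound on $\nabla v$).} Differentiating $w_t(\bx)v(\bx)=\int_\M R_t(\bx,\by)u(\by)\,\mathd\by$ along $\M$ and using $\nabla w_t(\bx)=\int_\M\nabla_\bx R_t(\bx,\by)\,\mathd\by$ gives
\begin{equation}
w_t(\bx)\,\nabla v(\bx)=\int_\M\nabla_\bx R_t(\bx,\by)\,(u(\by)-v(\bx))\,\mathd\by. \nonumber
\end{equation}
Since $R\in C^2(\R^+)$ vanishes on $(1,\infty)$, on the support of $\nabla_\bx R_t(\bx,\cdot)$ we have $\|\bx-\by\|\le 2\sqrt t$, and from $|\nabla_\bx\|\bx-\by\|^2|\le 2\|\bx-\by\|$ one obtains $|\nabla_\bx R_t(\bx,\by)|\le C t^{-1/2}G_t(\bx,\by)$, where $G_t(\bx,\by)=(4\pi t)^{-k/2}G(\|\bx-\by\|^2/4t)$ for a fixed nonnegative bounded profile $G$ majorizing $|R'|$ and supported in $[0,1]$. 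Using (b), (c) and the compactness and smoothness of $\M,\p\M$, the quantities $w_t(\bx)$ and $\int_\M G_t(\bx,\by)\,\mathd\by$ are bounded above and below by positive constants uniformly in $\bx\in\M$ for $t$ small (a ball $B(\bx,r)\cap\M$ has volume comparable to $r^k$). Cauchy--Schwarz then yields $|\nabla v(\bx)|^2\le C t^{-1}\int_\M G_t(\bx,\by)(u(\by)-v(\bx))^2\,\mathd\by$.

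\textbf{Step 2 (eliminating $v$).} Since $u(\by)-v(\bx)=w_t(\bx)^{-1}\int_\M R_t(\bx,\bz)(u(\by)-u(\bz))\,\mathd\bz$, Jensen's inequality gives $(u(\by)-v(\bx))^2\le C\int_\M R_t(\bx,\bz)(u(\by)-u(\bz))^2\,\mathd\bz$. Substituting into the bound from Step 1, integrating in $\bx$, applying $(u(\by)-u(\bz))^2\le 2(u(\by)-u(\bx))^2+2(u(\bx)-u(\bz))^2$, and integrating out the free variable using $\int_\M R_t\le C$ and $\int_\M G_t\le C$, we arrive at
\begin{equation}
\int_\M|\nabla v|^2\,\mathd\bx\le \frac{C}{t}\left(\int_\M\!\int_\M G_t(\bx,\by)(u(\bx)-u(\by))^2\,\mathd\bx\mathd\by+\int_\M\!\int_\M R_t(\bx,\by)(u(\bx)-u(\by))^2\,\mathd\bx\mathd\by\right). \nonumber
\end{equation}

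\textbf{Step 3 (kernel domination) and conclusion.} It remains to show $\int\!\int G_t(u(\bx)-u(\by))^2\le C\int\!\int R_t(u(\bx)-u(\by))^2$, which is where assumption (c) is essential. For $\bx,\by\in\M$ with $\|\bx-\by\|\le 2\sqrt t$ there is a set $\Omega_{\bx,\by}\subset\M$ of volume $\gtrsim t^{k/2}$ on which $\|\bx-\bz\|^2/4t\le 1/2$ and $\|\bz-\by\|^2/4t\le 1/2$ simultaneously (take $\bz$ near the projection onto $\M$ of the Euclidean midpoint of $\bx,\by$; the chord deviates from $\M$ by $O(\|\bx-\by\|^2)$ by smoothness), so $R_t(\bx,\bz),R_t(\bz,\by)\ge\delta_0(4\pi t)^{-k/2}$ there. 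Averaging $(u(\bx)-u(\by))^2\le 2(u(\bx)-u(\bz))^2+2(u(\bz)-u(\by))^2$ over $\bz\in\Omega_{\bx,\by}$, multiplying by $G_t(\bx,\by)$, integrating and changing the order of integration (again with $\int_\M G_t\le C$) gives the comparison. Combining the three steps yields $\int_\M|\nabla v|^2\le C t^{-1}\int_\M\!\int_\M R_t(\bx,\by)(u(\bx)-u(\by))^2\,\mathd\bx\mathd\by$, i.e. the asserted coercivity estimate in the $t^{-1}$-normalization of the integral formulation \eqref{eq:integral-pim}.

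The main obstacle is the geometric input in Step 3 (and, to a lesser extent, in Step 1): one must make the intermediate-point construction uniform \emph{up to $\p\M$}, i.e. verify that $|\Omega_{\bx,\by}|\ge c\,t^{k/2}$ with $c$ independent of $\bx,\by$ even when both points lie within $O(\sqrt t)$ of the boundary, and likewise that $w_t(\bx)$ and $\int_\M G_t(\bx,\by)\,\mathd\by$ stay between fixed positive constants near $\p\M$. These are precisely the estimates that use the $C^\infty$ regularity of $\M$ and $\p\M$ and the comparison between Euclidean and geodesic distance near the diagonal; the details are carried out in \cite{SS14}.
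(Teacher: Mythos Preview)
Your argument is correct and is essentially the standard proof of this coercivity estimate; the paper itself gives no proof here but simply defers to \cite{SS14}, and the argument there follows exactly the three stages you outline (differentiate $w_t v=\int R_t u$ to get the difference form, reduce to the nonlocal energy via Jensen and the elementary inequality, then compare kernels using the midpoint/chaining trick based on assumption~(c)). Your identification of the geometric input --- uniform lower bounds on $w_t$, $\int G_t$, and $|\Omega_{\bx,\by}|$ up to the boundary --- is also accurate and is precisely what is supplied in \cite{SS14}.

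One remark worth making explicit: as written in the paper, the inequality is missing a factor of $t$ (the left-hand side should carry $\frac{1}{t}$, or equivalently the right-hand side should be $Ct\int|\nabla v|^2$). Indeed, for smooth $u$ one has $\int\!\int R_t(u(\bx)-u(\by))^2\sim t$ while $\int|\nabla v|^2\to\int|\nabla u|^2>0$, so the stated form with a $t$-independent constant cannot hold. You prove the version with the $t^{-1}$ normalization, which is the correct one and is exactly how the lemma is actually applied throughout the paper (see the proofs of Corollary~\ref{cor:coercivity-inner} and Lemma~\ref{lem:elliptic_L_t}, where the authors write $\int|\nabla v|^2\le \frac{C}{t}\int\!\int R_t(\cdot)^2$). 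So your conclusion is right and your caveat about the normalization is well placed.
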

Next corollary directly follows from Lemma \ref{lem:coercivity}.
\begin{corollary}
\label{cor:coercivity-inner}
For any function $u\in L_2(\M_t')$, 
there exists a constant $C>0$ only depneds on $\M$, such that
 \begin{eqnarray}
    \frac{1}{t}\int_{\mathcal{M}'_t}\int_{\mathcal{M}'_t} R_t(\bx,\by)(u(\bx)-u(\by))^2\mathd\bx\mathd\by
+\frac{1}{t}\int_{\M'_t}u^2(\bx)\left(\int_{\V_t} R_t(\bx,\by)\mathd \by\right)\mathd\bx\ge C\int_{\mathcal{M}'_t} |\nabla v|^2\mathd \bx,\nonumber
  \end{eqnarray}
where 
\begin{eqnarray}
v(\bx)=\frac{1}{w_t(\bx)}\int_{\mathcal{M}'_t}R_t(\bx,\by)u(\by)\mathd \by, \nonumber
\end{eqnarray}
and $\D w_t(\bx) = \int_{\mathcal{M}}R_t\left(\bx,\by\right)\mathd \by$.
\end{corollary}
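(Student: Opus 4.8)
The plan is to deduce the Corollary from Lemma~\ref{lem:coercivity} by extending $u$ by zero outside $\M'_t$. Define $\tilde u\in L^2(\M)$ by $\tilde u=u$ on $\M'_t$ and $\tilde u=0$ on $\V_t$, and let $\tilde v(\bx)=\frac{1}{w_t(\bx)}\int_\M R_t(\bx,\by)\tilde u(\by)\mathd\by$ be the smoothed function that Lemma~\ref{lem:coercivity} associates to $\tilde u$ (well defined since $w_t(\bx)>0$ everywhere, $R$ being positive near the origin). Splitting $\M\times\M=(\M'_t\cup\V_t)\times(\M'_t\cup\V_t)$ into its four blocks and using the symmetry $R_t(\bx,\by)=R_t(\by,\bx)$ together with $\tilde u\equiv0$ on $\V_t$ --- so that the $\V_t\times\V_t$ block vanishes and the two mixed blocks contribute equally --- one records
\begin{eqnarray*}
\int_\M\int_\M R_t(\bx,\by)(\tilde u(\bx)-\tilde u(\by))^2\mathd\bx\mathd\by
&=&\int_{\M'_t}\int_{\M'_t} R_t(\bx,\by)(u(\bx)-u(\by))^2\mathd\bx\mathd\by\\
&&{}+2\int_{\M'_t} u^2(\bx)\left(\int_{\V_t} R_t(\bx,\by)\mathd\by\right)\mathd\bx .
\end{eqnarray*}

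Next, since $w_t(\bx)=\int_\M R_t(\bx,\by)\mathd\by$ and $\tilde u$ is supported in $\M'_t$, the smoothed function reduces to the one in the statement: $\tilde v(\bx)=\frac{1}{w_t(\bx)}\int_{\M'_t} R_t(\bx,\by)u(\by)\mathd\by=v(\bx)$ for every $\bx\in\M'_t$, whence $\int_\M|\nabla\tilde v|^2\mathd\bx\ge\int_{\M'_t}|\nabla v|^2\mathd\bx$ because $\M'_t\subset\M$ and the integrand is nonnegative. Applying Lemma~\ref{lem:coercivity} to $\tilde u$, dividing by $t$, substituting the identity above, and using $1/t\ge1$ (valid since $t$ is assumed small) then gives
\begin{eqnarray*}
\lefteqn{\frac{1}{t}\int_{\M'_t}\int_{\M'_t} R_t(\bx,\by)(u(\bx)-u(\by))^2\mathd\bx\mathd\by
+\frac{2}{t}\int_{\M'_t} u^2(\bx)\left(\int_{\V_t} R_t(\bx,\by)\mathd\by\right)\mathd\bx}\\
&\ge& C\int_{\M'_t}|\nabla v|^2\mathd\bx .
\end{eqnarray*}
The left-hand side here is at most twice the left-hand side of the Corollary --- its two terms are nonnegative and the second appears with coefficient $2/t$ rather than $1/t$ --- so dividing by $2$ yields the claim with constant $C/2$.

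I do not expect a genuine obstacle: the whole argument is elementary bookkeeping on top of Lemma~\ref{lem:coercivity}. The one point that wants care --- and the structural reason the Corollary carries the extra ``near-boundary mass'' term $\frac{1}{t}\int_{\M'_t}u^2\big(\int_{\V_t}R_t\big)$ --- is the domain splitting: zero-extension of $u$ turns that term into exactly the off-diagonal $\M'_t\times\V_t$ and $\V_t\times\M'_t$ contributions that are absent from the truncated nonlocal Dirichlet energy, which is precisely what makes the global coercivity bound applicable verbatim to $\tilde u$ (whose smoothing $\tilde v$ coincides with $v$ on $\M'_t$).
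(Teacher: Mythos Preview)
Your proof is correct and follows essentially the same approach as the paper's own proof: extend $u$ by zero to $\tilde u$ on $\M$, apply Lemma~\ref{lem:coercivity} to $\tilde u$, and split the resulting double integral over $\M\times\M$ into its $\M'_t$ and $\V_t$ blocks. You are simply more explicit than the paper about the factor of $2$ on the cross term and about why $\tilde v$ coincides with $v$ on $\M'_t$; the paper absorbs these details into the generic constant $C$.
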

\begin{proof}
  Let
  \begin{eqnarray*}
    \tilde{u}(\bx)=\left\{\begin{array}{cc}
u(\bx),&\bx\in \M'_t,\\
0,& \bx\in \V_t.
\end{array}\right.
  \end{eqnarray*}
Using Lemma \ref{lem:coercivity},
\begin{eqnarray}
 && \int_{\mathcal{M}'_t} |\nabla v|^2\mathd \bx\le \int_{\mathcal{M}} |\nabla v|^2\mathd \bx\nonumber\\
&\le& \frac{C}{t}\int_{\mathcal{M}}\int_{\mathcal{M}} R_t(\bx,\by)(\tilde{u}(\bx)-\tilde{u}(\by))^2\mathd\bx\mathd\by\nonumber\\
&=& \frac{C}{t}\int_{\mathcal{M}'_t}\int_{\mathcal{M}'_t} R_t(\bx,\by)(u(\bx)-u(\by))^2\mathd\bx\mathd\by
+\frac{C}{t}\int_{\M'_t}u^2(\bx)\left(\int_{\V_t} R_t(\bx,\by)\mathd \by\right)\mathd\bx.\nonumber
\end{eqnarray}
\end{proof}

Using Lemma \ref{lem:coercivity}, we can also get following lemma regarding the stability in $L^2(\M)$.
\begin{lemma}
For any function $u\in L_2(\M)$ with $ u(\bx) = 0$ in $\V_t$, 
there exists a constant $C>0$ independent on $t$
\begin{eqnarray}
\frac{1}{t}\int_{\M}\int_{\M}R_t(\bx,\by)(u(\bx)-u(\by))^2\mathd\bx\mathd\by
\geq C\|u\|_{L_2(\M)}^2,\nonumber
\end{eqnarray}
as long as $t$ small enough.
\label{lem:elliptic_L_t}
\end{lemma}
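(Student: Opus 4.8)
The plan is to combine the coercivity estimate of Lemma~\ref{lem:coercivity} with a Poincaré-type inequality adapted to the volume constraint $u|_{\V_t}=0$. The left-hand side controls $\int_\M|\nabla v|^2\,\mathd\bx$ where $v(\bx)=\frac{1}{w_t(\bx)}\int_\M R_t(\bx,\by)u(\by)\,\mathd\by$ is the mollified version of $u$. Since $u$ vanishes on the collar $\V_t$ of thickness $2\sqrt t$ around $\p\M$, and $v$ is a local average of $u$ on a ball of radius $2\sqrt t$, the function $v$ will itself vanish on the slightly thinner set $\M''_t=\{\bx\in\M: B(\bx,2\sqrt t)\subset\M'_t\}$ complement — more precisely, $v$ is small near $\p\M$. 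Because $v$ is (essentially) zero near the boundary, a standard Poincaré inequality on $\M$ gives $\|v\|_{L^2(\M)}^2\le C\|\nabla v\|_{L^2(\M)}^2$ with $C$ depending only on $\M$ and $\p\M$, not on $t$. So after Lemma~\ref{lem:coercivity} it remains only to compare $\|u\|_{L^2(\M)}$ with $\|v\|_{L^2(\M)}$.

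The key steps, in order, are: (i) apply Lemma~\ref{lem:coercivity} to bound the left-hand side below by $C\|\nabla v\|_{L^2(\M)}^2$; (ii) establish that $v$ vanishes (or is controllably small) in a neighborhood of $\p\M$ using $u|_{\V_t}=0$ and the finite support of $R_t$ — here one uses that $R(r)=0$ for $r>1$, so $R_t(\bx,\by)=0$ once $\|\bx-\by\|>2\sqrt t$; (iii) invoke the Poincaré inequality for functions vanishing near $\p\M$ to get $\|\nabla v\|_{L^2(\M)}^2\ge C\|v\|_{L^2(\M)}^2$; (iv) show $\|v\|_{L^2(\M)}^2\ge \|u\|_{L^2(\M)}^2 - (\text{error})$, where the error is itself controlled by $t$ times the left-hand side. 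For step (iv) the natural route is to write $u-v = u - \frac{1}{w_t}\int R_t u$ and estimate $\|u-v\|_{L^2(\M)}^2 \le Ct\cdot\frac{1}{t}\int_\M\int_\M R_t(\bx,\by)(u(\bx)-u(\by))^2\,\mathd\bx\,\mathd\by$, i.e.\ the mollification error is of order $t$ relative to the nonlocal Dirichlet energy; then $\|u\|_{L^2}\le\|v\|_{L^2}+\|u-v\|_{L^2}$ and one absorbs the $\|u-v\|$ term into the left-hand side provided $t$ is small enough.

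Combining: the left-hand side $\mathcal E_t(u):=\frac1t\int_\M\int_\M R_t(u(\bx)-u(\by))^2$ satisfies $\mathcal E_t(u)\ge C\|\nabla v\|^2\ge C\|v\|^2\ge C(\|u\|-\|u-v\|)^2\ge C(\|u\|^2 - 2\|u\|\,\|u-v\|)$, and since $\|u-v\|^2\le C t\,\mathcal E_t(u)$ we get $\mathcal E_t(u)\ge C\|u\|^2 - C\sqrt t\,\|u\|\sqrt{\mathcal E_t(u)}$; a Young's inequality $\sqrt t\,\|u\|\sqrt{\mathcal E_t(u)}\le \frac12\epsilon^{-1}t\|u\|^2 + \frac{\epsilon}{2}\mathcal E_t(u)$ with suitable $\epsilon$, then for $t$ small enough, yields $\mathcal E_t(u)\ge C\|u\|_{L^2(\M)}^2$.

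I expect step (ii)–(iii), the Poincaré inequality "with a margin," to be the main obstacle: one must be careful that $v$ genuinely inherits the vanishing of $u$ near $\p\M$ (the averaging radius $2\sqrt t$ exactly matches the thickness of $\V_t$, which is surely why that thickness was chosen), and that the Poincaré constant is uniform in $t$. The mollification error bound in step (iv) is essentially the Cauchy–Schwarz / Jensen argument that $\left(u(\bx)-\frac{1}{w_t(\bx)}\int R_t(\bx,\by)u(\by)\,\mathd\by\right)^2\le\frac{1}{w_t(\bx)}\int R_t(\bx,\by)(u(\bx)-u(\by))^2\,\mathd\by$, integrated over $\bx$, together with the lower bound $w_t(\bx)\ge c$ coming from assumption (c) on $R$; this is routine given the earlier machinery.
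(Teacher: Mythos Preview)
Your proof is correct and shares its opening moves with the paper: define the mollification $v$, observe that $v|_{\partial\M}=0$ because the support radius $2\sqrt t$ of $R_t$ matches the thickness of $\V_t$, and then combine Lemma~\ref{lem:coercivity} with the standard Poincar\'e inequality on $\M$ to get $\mathcal E_t(u)\ge C\|v\|_{L^2(\M)}^2$. Your concern about steps (ii)--(iii) being the ``main obstacle'' is misplaced: since $v$ vanishes \emph{exactly} on $\partial\M$, the ordinary Poincar\'e inequality on the fixed manifold $\M$ applies with constant independent of $t$; no ``Poincar\'e with a margin'' is needed.

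Where you and the paper diverge is in passing from $\|v\|$ back to $\|u\|$. You bound $\|u-v\|_{L^2}^2\le Ct\,\mathcal E_t(u)$ via Jensen and then absorb using Young's inequality, which is clean and works once $t$ is small. The paper instead performs a case split on whether $\|v\|_{L^2}\ge\delta\|u\|_{L^2}$ for the explicit constant $\delta=\frac{w_{\min}}{2w_{\max}+w_{\min}}$: if so, one is done at once; if not, then $\|u-v\|_{L^2}>(1-\delta)\|u\|_{L^2}>\frac{2w_{\max}}{w_{\min}}\|v\|_{L^2}$, and the algebraic identity
\[
\frac{1}{t}\int_\M\int_\M R_t(\bx,\by)(u(\bx)-u(\by))^2\,\mathd\bx\,\mathd\by
=\frac{2}{t}\int_\M u(\bx)\bigl(u(\bx)-v(\bx)\bigr)w_t(\bx)\,\mathd\bx
\]
together with Cauchy--Schwarz yields $\mathcal E_t(u)\ge \frac{w_{\min}(1-\delta)^2}{t}\|u\|_{L^2}^2$ directly. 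Thus in the paper's second case one even gets a $1/t$ on the right and no smallness of $t$ is used there. Your absorption route is more mechanical and arguably more transparent; the paper's dichotomy avoids the absorption step at the cost of a slightly more delicate computation. Both are short and valid.
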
 

\begin{proof}
Let
  \begin{eqnarray}
    v(\bx)=\frac{1}{w_t(\bx)}\int_\M R_t(\bx,\by)u(\by)\mathd \by.\nonumber
  \end{eqnarray}
Since $u(\bx)=0,\; \bx\in \V_t$, we have
\begin{eqnarray}
  \label{eq:v-boundary}
  v(\bx)=0,\quad \bx\in \p\M.\nonumber
\end{eqnarray}
By Lemma~\ref{lem:coercivity} and the Poincare inequality, there exists a constant $C>0$, such that 
  \begin{eqnarray}
    \int_{\mathcal{M}} |v(\bx)|^2\mathd \bx \le  \int_{\mathcal{M}} |\nabla v(\bx)|^2\mathd \bx&\le& \frac{C}{t}\int_{\mathcal{M}}\int_{\mathcal{M}} R_t\left(\bx,\by\right)  (u(\bx)-u(\by))^2\mathd \mu_\bx \mathd \mu_\by \nonumber
  \end{eqnarray}

Let $\delta=\frac{w_{\min}}{2w_{\max}+w_{\min}}$. If $u$ is smooth and close to its smoothed version $v$, in particular, 
\begin{eqnarray}
  \int_\mathcal{M} v^2(\bx)\mathd \mu_\bx\ge \delta^2 \int_\mathcal{M} u^2(\bx)\mathd \mu_\bx,
\label{eqn:smooth_u}
\end{eqnarray}
then the proof is completed. 

Now consider the case where $\eqref{eqn:smooth_u}$ does not hold.  Note that we now have
\begin{eqnarray}
  \|u-v\|_{L^2(\M)} &\ge& \|u\|_{L_2(\M)}-\|v\|_{L_2(\M)}> (1-\delta)\|u\|_{L_2(\M)}\nonumber\\
  &>&\frac{1-\delta}{\delta}\|v\|_{L_2(\M)}=\frac{2w_{\max}}{w_{\min}}\|v\|_{L_2(\M)}.\nonumber 
\end{eqnarray}
Then we have  
\begin{eqnarray}
  &&\frac{C_t}{t}\int_\mathcal{M}\int_{\mathcal{M}} R\left(\frac{|\bx-\by|^2}{4t}\right) (u(\bx)-u(\by))^2\mathd \mu_\bx \mathd \mu_\by\nonumber\\
&=&\frac{2C_t }{t}\int_\mathcal{M} u(\bx)\int_{\mathcal{M}} R\left(\frac{|\bx-\by|^2}{4t}\right) (u(\bx)-u(\by))\mathd \mu_\by \mathd \mu_\bx\nonumber\\
&=&\frac{2 }{t}\left(\int_\mathcal{M} u^2(\bx)w_t(\bx)\mathd \mu_\bx-\int_{\mathcal{M}}u(\bx)v(\bx)w_t(\bx)\mathd \mu_\bx\right)\nonumber\\
&=&\frac{2 }{t}\left(\int_\mathcal{M} (u(\bx)-v(\bx))^2w_t(\bx)\mathd \mu_\bx+
\int_{\mathcal{M}}(u(\bx)-v(\bx))v(\bx)w_t(\bx)\mathd \mu_\bx\right)\nonumber\\
&\ge & \frac{2 }{t}\int_\mathcal{M} (u(\bx)-v(\bx))^2w_t(\bx)\mathd \mu_\bx-\frac{2 }{t}
\left(\int_{\mathcal{M}}v^2(\bx)w_t(\bx)\mathd \mu_\bx\right)^{1/2}
\left(\int_{\mathcal{M}}(u(\bx)-v(\bx))^2w_t(\bx)\mathd \mu_\bx\right)^{1/2}\nonumber\\
&\ge& \frac{2 w_{\min}}{t}\int_\mathcal{M} (u(\bx)-v(\bx))^2\mathd \mu_\bx-\frac{2 w_{\max}}{t}
\left(\int_{\mathcal{M}}v^2(\bx)\mathd \mu_\bx\right)^{1/2}
\left(\int_{\mathcal{M}}(u(\bx)-v(\bx))^2\mathd \mu_\bx\right)^{1/2}\nonumber\\
&\ge&\frac{ w_{\min}}{t}\int_\mathcal{M} (u(\bx)-v(\bx))^2\mathd \mu_\bx\ge \frac{ w_{\min}}{t}(1-\delta)^2\int_\mathcal{M} u^2(\bx)\mathd \mu_\bx.\nonumber
\end{eqnarray} 
This completes the proof for the theorem.  
\end{proof}

\begin{corollary}
  \label{cor:l2-inner}
 For any function $u\in L_2(\M_t')$, 
there exists a constant $C>0$ independent on $t$, such that
\begin{eqnarray}
\frac{1}{t}\int_{\M'_t}\int_{\M'_t}R_t(\bx,\by)(u(\bx)-u(\by))^2\mathd\bx\mathd\by
+\int_{\M'_t}u^2(\bx)\left(\int_{\V_t}R_t(\bx,\by)\mathd \by\right)\mathd\bx 
\geq C\|u\|_{L_2(\M'_t)}^2,\nonumber
\end{eqnarray}
as long as $t$ small enough.
\end{corollary}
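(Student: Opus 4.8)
The plan is to reduce the statement to Lemma~\ref{lem:elliptic_L_t} by extending $u$ by zero. Given $u\in L_2(\M'_t)$, define $\tilde u$ on all of $\M$ by $\tilde u(\bx)=u(\bx)$ for $\bx\in\M'_t$ and $\tilde u(\bx)=0$ for $\bx\in\V_t$. Then $\tilde u\in L_2(\M)$ and $\tilde u\equiv 0$ on $\V_t$, so Lemma~\ref{lem:elliptic_L_t} applies directly and yields, for $t$ small enough, $\frac1t\int_\M\int_\M R_t(\bx,\by)(\tilde u(\bx)-\tilde u(\by))^2\,\mathd\bx\,\mathd\by \ge C\|\tilde u\|_{L_2(\M)}^2 = C\|u\|_{L_2(\M'_t)}^2$, since $\tilde u$ and $u$ differ only on the set $\V_t$ where $\tilde u$ vanishes.

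The next step is to re-express the double integral over $\M\times\M$ in terms of $\M'_t$ and $\V_t$. Splitting $\M=\M'_t\cup\V_t$ produces four product regions. On $\V_t\times\V_t$ the integrand vanishes because $\tilde u\equiv 0$ there; on $\M'_t\times\M'_t$ it equals $R_t(\bx,\by)(u(\bx)-u(\by))^2$; and on each mixed region only the $\M'_t$-variable survives inside the square, so the contribution is $\int_{\M'_t}u^2(\bx)\big(\int_{\V_t}R_t(\bx,\by)\,\mathd\by\big)\mathd\bx$, where for the $\V_t\times\M'_t$ block one uses the symmetry $R_t(\bx,\by)=R_t(\by,\bx)$ together with Fubini's theorem to bring it to this form. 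Collecting the pieces gives the identity $\frac1t\int_\M\int_\M R_t(\tilde u(\bx)-\tilde u(\by))^2\,\mathd\bx\,\mathd\by = \frac1t\int_{\M'_t}\int_{\M'_t}R_t(\bx,\by)(u(\bx)-u(\by))^2\,\mathd\bx\,\mathd\by + \frac2t\int_{\M'_t}u^2(\bx)\big(\int_{\V_t}R_t(\bx,\by)\,\mathd\by\big)\mathd\bx$.

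Combining the two displays gives $\frac1t\int_{\M'_t}\int_{\M'_t}R_t(u-u)^2 + \frac2t\int_{\M'_t}u^2\big(\int_{\V_t}R_t\big)\ge C\|u\|_{L_2(\M'_t)}^2$, and since the left-hand side is bounded by a fixed ($t$-independent) multiple of the quantity appearing in the corollary, the claim follows after replacing $C$ by a smaller constant, with the smallness requirement on $t$ inherited from Lemma~\ref{lem:elliptic_L_t}. I do not anticipate a genuine obstacle here: the argument is the natural counterpart of the proof of Corollary~\ref{cor:coercivity-inner}, and the only point requiring care is the bookkeeping in the region decomposition — in particular verifying that the two off-diagonal blocks combine into a single boundary-type term, which is precisely where the symmetry of the kernel $R_t$ is used.
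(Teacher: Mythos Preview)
Your approach is exactly the paper's: extend $u$ by zero to $\tilde u$ and invoke Lemma~\ref{lem:elliptic_L_t}; the paper's proof is in fact even more terse and omits the region decomposition you carefully spell out.

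One caution on your final sentence. Your decomposition yields
\[
\frac{1}{t}\int_{\M'_t}\int_{\M'_t}R_t(u-u)^2 \;+\; \frac{2}{t}\int_{\M'_t}u^2\!\int_{\V_t}R_t \;\ge\; C\|u\|_{L^2(\M'_t)}^2,
\]
and you then assert this left-hand side is a fixed $t$-independent multiple of the quantity in the corollary. As the corollary is \emph{literally} stated --- with no $1/t$ in front of the boundary term --- that assertion is false: for $u\equiv 1$ on $\M'_t$ the first term vanishes and the boundary term is $O(\sqrt t)$ while $\|u\|_{L^2(\M'_t)}^2$ is bounded below, so no $t$-independent comparison is possible (and indeed the corollary as written cannot hold). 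The missing $\tfrac{1}{t}$ on the second term is evidently a typo in the paper; compare Corollary~\ref{cor:coercivity-inner} and note that when the result is applied in the proof of Theorem~\ref{thm:stability} the factor $\tfrac{1}{t}$ is present. With that correction your ``fixed multiple'' is simply $2$ and the argument goes through verbatim.
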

\begin{proof}
  Consider 
\begin{eqnarray*}
    \tilde{u}(\bx)=\left\{\begin{array}{cc}
u(\bx),&\bx\in \M'_t,\\
0,& \bx\in \V_t.
\end{array}\right.
  \end{eqnarray*}
and apply Lemma \ref{lem:elliptic_L_t}.
\end{proof}
Now, we can prove one important theorem.
\begin{theorem}
  \label{thm:stability}
Let $u(\bx)\in L^2(\M)$ be solution of following integral equation
\begin{eqnarray} 
\left \{
\begin{array}{rlll}
 \D   \frac{1}{t}\int_{\M}R_t(\bx,\by)(u(\bx)-u(\by))\mathd\by&=&\D r(\bx), 
\quad & \bx \in \mathcal{M}'_t \\
    u(\bx) & = & 0,\quad & \bx \in \mathcal{V}_t \\
\end{array}
\right.
\label{eq:integral-equation}
\end{eqnarray}
There exists $C>0$ only depends on $\M$ and $\p\M$, 
such that
\begin{eqnarray*}
  \|u\|_{H^1(\M'_t)}\le C\|r\|_{L^2(\M'_t)}+Ct\|\nabla r\|_{L^2(\M'_t)}
\end{eqnarray*}
\end{theorem}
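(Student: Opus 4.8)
The plan is a variational (energy) estimate built on the coercivity results of this section, followed by an algebraic step that recovers $u$ from its mollified version. I would first test \eqref{eq:integral-equation} against $u$: multiply the first equation by $u(\bx)$, integrate over $\M'_t$, and split the inner integral as $\int_\M=\int_{\M'_t}+\int_{\V_t}$. Since $u\equiv0$ on $\V_t$, the $\V_t$-part contributes $\frac1t\int_{\M'_t}u^2(\bx)\left(\int_{\V_t}R_t(\bx,\by)\mathd\by\right)\mathd\bx$, and the $\M'_t\times\M'_t$-part symmetrizes (using $R_t(\bx,\by)=R_t(\by,\bx)$) into $\frac1{2t}\int_{\M'_t}\int_{\M'_t}R_t(\bx,\by)(u(\bx)-u(\by))^2\mathd\bx\mathd\by$. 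This gives the identity
\begin{equation*}
\frac{1}{2t}\int_{\M'_t}\int_{\M'_t}R_t(\bx,\by)(u(\bx)-u(\by))^2\mathd\bx\mathd\by+\frac{1}{t}\int_{\M'_t}u^2(\bx)\left(\int_{\V_t}R_t(\bx,\by)\mathd\by\right)\mathd\bx=\int_{\M'_t}r(\bx)u(\bx)\mathd\bx .
\end{equation*}
For $t$ small the left-hand side is bounded below by constant multiples of the left-hand sides of both Corollary~\ref{cor:coercivity-inner} and Corollary~\ref{cor:l2-inner}, hence by $C(\|u\|_{L^2(\M'_t)}^2+\|\nabla v\|_{L^2(\M'_t)}^2)$ with $v(\bx)=\frac{1}{w_t(\bx)}\int_{\M'_t}R_t(\bx,\by)u(\by)\mathd\by$. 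Estimating the right-hand side by $\|r\|_{L^2(\M'_t)}\|u\|_{L^2(\M'_t)}$ and absorbing $\|u\|_{L^2(\M'_t)}$ into the left-hand side yields $\|u\|_{L^2(\M'_t)}+\|\nabla v\|_{L^2(\M'_t)}\le C\|r\|_{L^2(\M'_t)}$.

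Next I would recover $u$ from $v$. Writing $w_t(\bx)=\int_\M R_t(\bx,\by)\mathd\by$ and using $u\equiv0$ on $\V_t$ (so that $\int_{\M'_t}R_t(\bx,\by)u(\by)\mathd\by=\int_\M R_t(\bx,\by)u(\by)\mathd\by$), the first equation of \eqref{eq:integral-equation} rearranges, for $\bx\in\M'_t$, to the pointwise identity
\begin{equation*}
u(\bx)=v(\bx)+t\,\frac{r(\bx)}{w_t(\bx)} .
\end{equation*}
For $\bx\in\M'_t$ the ball $B(\bx,2\sqrt t)$ does not meet $\p\M$, so $w_t(\bx)$ is a ``full'' kernel integral; by standard estimates (cf.\ \cite{LSS,SS14}) there are constants with $0<w_{\min}\le w_t(\bx)\le w_{\max}$ and $t|\nabla w_t(\bx)|\le C$ uniformly on $\M'_t$ for $t$ small. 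Hence $\|u\|_{L^2(\M'_t)}\le\|v\|_{L^2(\M'_t)}+Ct\|r\|_{L^2(\M'_t)}$, and differentiating the identity and applying the product rule to $r/w_t$,
\begin{equation*}
\|\nabla u\|_{L^2(\M'_t)}\le\|\nabla v\|_{L^2(\M'_t)}+Ct\|\nabla r\|_{L^2(\M'_t)}+C\|r\|_{L^2(\M'_t)} .
\end{equation*}
Combining these with $\|u\|_{L^2(\M'_t)}+\|\nabla v\|_{L^2(\M'_t)}\le C\|r\|_{L^2(\M'_t)}$ from the previous step (and noting that $v\in H^1(\M'_t)$, which a posteriori also justifies $u\in H^1(\M'_t)$) gives exactly $\|u\|_{H^1(\M'_t)}\le C\|r\|_{L^2(\M'_t)}+Ct\|\nabla r\|_{L^2(\M'_t)}$.

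I expect the only delicate points to be bookkeeping. First, one must check that after the split and symmetrization the extra $\V_t$-term is precisely the one appearing in Corollaries~\ref{cor:coercivity-inner} and \ref{cor:l2-inner}, so that those results apply verbatim, and that the harmless factors of $2$ and $1/t$ go the right way for $t$ small. Second, and more substantively, one needs the uniform two-sided bound on $w_t$ and the bound on $t\nabla w_t$ on the interior region $\M'_t$; this is where the geometric role of the $2\sqrt t$-collar $\V_t$ enters, guaranteeing that for $\bx\in\M'_t$ the kernel $R_t(\bx,\cdot)$ never reaches $\p\M$. It is worth emphasizing that the term $t\|\nabla r\|_{L^2(\M'_t)}$ in the conclusion comes entirely from the algebraic decomposition $u=v+tr/w_t$, not from any gradient information on the right-hand side of the energy identity, so that identity only needs the crudest Cauchy--Schwarz bound.
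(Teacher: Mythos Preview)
Your proposal is correct and follows essentially the same route as the paper: test \eqref{eq:integral-equation} against $u$ to get the energy identity, invoke Corollaries~\ref{cor:coercivity-inner} and \ref{cor:l2-inner} for coercivity, and then use the algebraic relation $u=v+t\,r/w_t$ together with the uniform bounds on $w_t$ and $\nabla w_t$ on $\M'_t$ to pass from $\nabla v$ to $\nabla u$. The only cosmetic difference is that the paper obtains $\|u\|_{L^2}$ and $\|\nabla u\|_{L^2}$ in two separate passes (first $L^2$, then gradient via \eqref{eq:error-grad-stable}--\eqref{eq:error-dl2-3-stable}), whereas you extract $\|u\|_{L^2}+\|\nabla v\|_{L^2}$ simultaneously from the energy identity before invoking the decomposition; the ingredients and the final estimate are identical.
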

\begin{proof} 
First of all, we have
\begin{eqnarray}
&&  \frac{1}{t}\int_{\M'_t}u(\bx)\int_{\M}R_t(\bx,\by)(u(\bx)-u(\by))\mathd\by\mathd\bx\nonumber\\
&=& \frac{1}{t}\int_{\M'_t}u(\bx)\int_{\M'_t}R_t(\bx,\by)(u(\bx)-u(\by))\mathd\by\mathd\bx+
 \frac{1}{t}\int_{\M'_t}u(\bx)\int_{\V_t}R_t(\bx,\by)(u(\bx)-u(\by))\mathd\by\mathd\bx\nonumber\\
&=& \frac{1}{2t}\int_{\M'_t}\int_{\M'_t}R_t(\bx,\by)(u(\bx)-u(\by))^2\mathd\bx\mathd\by+\frac{1}{t}\int_{\M'_t}u^2(\bx)
\int_{\V_t}R_t(\bx,\by)\mathd\by\mathd\bx.
\nonumber
\end{eqnarray}

 Now we can get $L^2$ estimate of $u$.  
Using Corollary \ref{cor:l2-inner}, we have 
\begin{eqnarray}
  \|u\|_{2,\M'_t}^2&\le& \frac{C}{t}\int_{\M'_t}\int_{\M'_t}R_t(\bx,\by)(u(\bx)-u(\by))^2\mathd\bx\mathd\by
+\frac{C}{t}\int_{\M'_t}|u(\bx)|^2\left(\int_{\V_t}R_t(\bx,\by)\mathd\by\right)\mathd\bx\nonumber\\
&\le&  \left| \frac{C}{t}\int_{\M'_t}u(\bx)\int_{\M}R_t(\bx,\by)(u(\bx)-u(\by))\mathd\by\mathd\bx\right|\nonumber\\
&\le&C\|u\|_{2,\M'_t}\|r\|_{2,\M'_t}\nonumber
\end{eqnarray}
This gives that
\begin{eqnarray}
\label{eq:error-l2-stable}
  \|u\|_{L^2(\M'_t)}\le C\|r\|_{L^2(\M'_t)}.
\end{eqnarray}

Next, we turn to estimate the $L^2$ norm of $\nabla e_t$ in $\M'_t$. 
Using the integral equation \eqref{eq:integral-equation}, $u$ has following expression
\begin{eqnarray}
  u_t(\bx)&=&\frac{1}{w_t(\bx)}\int_{\M'_t}R_t(\bx,\by)u_t(\by)\mathd\by+\frac{t}{w_t(\bx)}r(\bx),\quad \bx \in \V_t.
\end{eqnarray}
Then $\|\nabla u_t\|_{2,\M'_t}^2$ can be bounded as following
\begin{equation}
  \label{eq:error-grad-stable}
  \|\nabla u_t\|_{2,\M'_t}^2\le C\left\|\nabla\left(\frac{1}{w_t(\bx)}\int_{\M'_t}R_t(\bx,\by)u_t(\by)\mathd\by\right)\right\|_{2,\M'_t}^2+Ct^2\left\|\nabla\left(\frac{r(\bx)}{w_t(\bx)}\right)\right\|_{2,\M'_t}^2
\end{equation}
Corollary \ref{cor:coercivity-inner} gives a bound the first term of \eqref{eq:error-grad-stable}.
\begin{align}
  \label{eq:error-dl2-1-stable}
&  \left\|\nabla\left(\frac{1}{w_t(\bx)}\int_{\M'_t}R_t(\bx,\by)u_t(\by)\mathd\by\right)\right\|_{2,\M'_t}^2\\
\le& 
\frac{C}{t}\int_{\M'_t}\int_{\M'_t}R_t(\bx,\by)(u_t(\bx)-u_t(\by))^2\mathd\by\mathd\bx
+\frac{C}{t}\int_{\M'_t}|u_t(\bx)|^2\left(\int_{\V_t}R_t(\bx,\by)\mathd\by\right)\mathd\bx.\nonumber
\end{align}

The second terms of \eqref{eq:error-grad-stable} can be bounded by direct calculation.
\begin{eqnarray}
  \label{eq:error-dl2-3-stable}
  \left\|\nabla\left(\frac{r(\bx)}{w_t(\bx)}\right)\right\|_{2,\M'_t}^2&\le& C\left\|\frac{\nabla r(\bx)}{w_t(\bx)}\right\|_{2,\M'_t}^2
+C\left\|\frac{ r(\bx)\nabla w_t(\bx)}{(w_t(\bx))^2}\right\|_{2,\M'_t}^2\\
&\le& C\left\|\nabla r(\bx)\right\|_{2,\M'_t}^2
+\frac{C}{t}\left\|r(\bx)\right\|_{2,\M'_t}^2.\nonumber
\end{eqnarray}
Now we have the bound of $\|\nabla u_t\|_{2,\M'_t}$ by combining \eqref{eq:error-grad-stable}, 
\eqref{eq:error-dl2-1-stable}, and \eqref{eq:error-dl2-3-stable}
\begin{align}
  \label{eq:error-grad-1-stable}
  &\|\nabla u_t\|_{2,\M'_t}^2\le \frac{C}{t}\int_{\M'_t}\int_{\M'_t}R_t(\bx,\by)(u_t(\bx)-u_t(\by))^2\mathd\bx\mathd\by\\
&+
\frac{C}{t}\int_{\M'_t}|u_t(\bx)|^2\left(\int_{\V_t}R_t(\bx,\by)\mathd\by\right)\mathd\bx
+Ct^2\left\|\nabla r(\bx)\right\|_{2,\M'_t}^2
+Ct\left\|r(\bx)\right\|_{2,\M'_t}^2.\nonumber
\end{align}

Then the bound of $\|\nabla u_t\|_{2,\M'_t}$ can be obtained also from \eqref{eq:error-grad-1-stable}
\begin{eqnarray}
&&  \|\nabla u_t\|_{2,\M'_t}^2\nonumber\\
&\le&\frac{C}{t}\int_{\M'_t}\int_{\M'_t}R_t(\bx,\by)(u_t(\bx)-u_t(\by))^2\mathd\bx\mathd\by+Ct\left\|r(\bx)\right\|_{2,\M'_t}^2\nonumber\\
&&+
\frac{C}{t}\int_{\M'_t}|u_t(\bx)|^2\left(\int_{\V_t}R_t(\bx,\by)\mathd\by\right)\mathd\bx+Ct^2\left\|\nabla r(\bx)\right\|_{2,\M'_t}^2
\nonumber\\
&\le&  \left| \frac{C}{t}\int_{\M'_t}u_t(\bx)\int_{\M}R_t(\bx,\by)(u_t(\bx)-u_t(\by))\mathd\by\mathd\bx\right|
\nonumber\\
&&+Ct^2\left\|\nabla r(\bx)\right\|_{2,\M'_t}^2
+Ct\left\|r(\bx)\right\|_{2,\M'_t}^2\nonumber\\
&\le&\|u_t\|_{2,\M'_t}\|r\|_{2,\M'_t}+ Ct^2\left\|\nabla r(\bx)\right\|_{2,\M'_t}^2
+Ct\left\|r(\bx)\right\|_{2,\M'_t}^2\nonumber\\
&\le&C\|r\|_{L^2(\M'_t)}^2+ Ct^2\left\|\nabla r(\bx)\right\|_{2,\M'_t}^2.\nonumber
\end{eqnarray}
Then we have
\begin{eqnarray}
  \label{eq:error-dl2-stable}
  \|\nabla u_t\|_{2, \M'_t}\le C\|r\|_{L^2(\M'_t)}+ Ct\left\|\nabla r(\bx)\right\|_{2,\M'_t}.
\end{eqnarray}
The proof is completed by putting \eqref{eq:error-l2-stable} and \eqref{eq:error-dl2-stable} together.
\end{proof}


\section{Convergence analysis}
\label{sec:converge}

The main purpose of this section is to prove that the solution of \eqref{eq:dis} converges to the solution of the original 
Poisson equation \eqref{eq:dirichlet}, i.e. Theorem \ref{thm:converge} in Section \ref{sec:PIM}. To prove this theorem, we split it 
to two parts. First, we prove that the solution of the integral equation \eqref{eq:integral} converges to the solution of the Poisson equation 
\eqref{eq:dirichlet}, which is given in Theorem \ref{thm:converge-integral}. Then we prove Theorem \ref{thm:converge-dis} to show that 
the solution of \eqref{eq:dis} converges to the solution of \eqref{eq:integral}.

\subsection{Integral approximation of Poisson equation}

To prove the convergence of the integral equation \eqref{eq:integral}, we need following theorem about the consistency
which is proved in \cite{SS-rate}.
\begin{theorem} Let $u(\bx)$ be the solution of the problem~\eqref{eq:dirichlet}.
 Let $u\in H^3(\M)$ and
 \begin{eqnarray*}
   r(\bx)=\frac{1}{t}\int_{\M}R_t(\bx,\by)(u(\bx)-u(\by))\mathd\by-\int_{\M}\bar{R}_t(\bx,\by)f(\by)\mathd\by.
 \end{eqnarray*}
There exists constants
$C, T_0$ depending only on $\M$ and $\p\M$, so that for any $t\le T_0$,
\begin{eqnarray}
\label{eq:integral_error_l2}
\left\|r(\bx)\right\|_{L^2(\M'_t)}&\le& Ct^{1/2}\|u\|_{H^3(\mathcal{M})},\\
\label{eq:integral_error_h1}
\left\|\nabla r(\bx)\right\|_{L^2(\M'_t)}
&\leq& C\|u\|_{H^3(\mathcal{M})}.
\end{eqnarray}
\label{thm:integral_error}
\end{theorem}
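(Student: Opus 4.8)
The plan is to reduce the statement to an \emph{interior} consistency bound for the integral operator and then carry out a local Taylor expansion against the rescaled kernel. Write $L_t u(\bx)=\frac1t\int_\M R_t(\bx,\by)(u(\bx)-u(\by))\mathd\by$. The starting point is the approximate integration-by-parts identity underlying \eqref{eq:integral-pim}: for smooth $u$,
\[
  L_t u(\bx)=-\int_\M\bar R_t(\bx,\by)\,\Delta_\M u(\by)\,\mathd\by+2\int_{\p\M}\bar R_t(\bx,\by)\frac{\p u}{\p\bn}(\by)\,\mathd\mu_\by+R_{\mathrm{int}}(\bx),
\]
where $R_{\mathrm{int}}$ is the interior remainder. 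Since $u$ solves \eqref{eq:dirichlet} we have $-\Delta_\M u=f$, so $r(\bx)=R_{\mathrm{int}}(\bx)+2\int_{\p\M}\bar R_t(\bx,\by)\frac{\p u}{\p\bn}(\by)\,\mathd\mu_\by$. The point is that the cutoff condition (b) on $R$ forces $\mathrm{supp}\,\bar R_t(\bx,\cdot)\subset B(\bx,2\sqrt t)$, and by the definition \eqref{eq:domain} of $\M'_t$ this ball misses $\p\M$ whenever $\bx\in\M'_t$; hence the boundary integral vanishes identically on $\M'_t$ and $r|_{\M'_t}=R_{\mathrm{int}}|_{\M'_t}$. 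No boundary-layer analysis is needed — that is exactly what the volume constraint buys — and the whole theorem reduces to bounding $R_{\mathrm{int}}$ in $L^2(\M'_t)$ and its gradient in $L^2(\M'_t)$.

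To control $R_{\mathrm{int}}(\bx)$ for a fixed $\bx\in\M'_t$, I would work in a local parametrization of $\M$ near $\bx$ (orthogonal projection onto $T_\bx\M$, or the exponential chart), writing $\by=\Phi(\bxi)$, $\bxi\in\R^k$. Three expansions enter: $\|\bx-\by\|^2=|\bxi|^2+O(|\bxi|^4)$ (ambient versus geodesic distance, governed by the second fundamental form), $\mathd\by=(1+O(|\bxi|^2))\,\mathd\bxi$ (volume form), and $u(\by)-u(\bx)=\nabla u\cdot\bxi+\tfrac12\,\bxi^{\!\top}\nabla^2u\,\bxi+\rho_3(\bx,\bxi)$ with $|\rho_3|\le C|\bxi|^3$ times a segment average of $|\nabla^3u|$. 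Odd-order terms vanish by radial symmetry of the kernel; the quadratic term is handled by the elementary identity $\frac1t\int_{\R^k}R_t(\bxi)\,\xi_i\xi_j\,\mathd\bxi=2\delta_{ij}\int_{\R^k}\bar R_t(\bxi)\,\mathd\bxi$, which is just $\bar R'=-R$ integrated by parts in the radial variable and is the algebraic core of PIM; it turns $\frac1{2t}\int R_t\,\bxi^{\!\top}\nabla^2u\,\bxi$ into $\int_\M\bar R_t(\bx,\by)\Delta_\M u(\by)\,\mathd\by$ up to the curvature and volume-form corrections. What survives in $R_{\mathrm{int}}$ is then: (i) the cubic Taylor remainder, $\sim\frac1t\int|\bxi|^3R_t\,|\nabla^3u|$, of pointwise size $O(t^{1/2})$, and (ii) curvature and volume corrections of the type $\sim\frac1t\int|\bxi|^4\,t^{-1}|R'|\,|\nabla^2u|$ and $\sim\frac1t\int|\bxi|^2R_t\,|\nabla^2u|$, of size $O(t)$. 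The dominant contribution is (i).

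To upgrade this from a pointwise $C^3$ bound to the stated $L^2$–$H^3$ bound, I would not estimate pointwise but keep every remainder in integral form and apply Minkowski's integral inequality (equivalently Young's convolution inequality): each term is a convolution of $|\nabla^3u|$ (or $|\nabla^2u|$), extended to the $2\sqrt t$-neighbourhood of $\M'_t$ inside $\M$, against a rescaled kernel such as $\frac1t|\bxi|^3R_t$, whose $L^1_{\bxi}$-norm is $O(t^{1/2})$ uniformly in $\bx$; this yields $\|R_{\mathrm{int}}\|_{L^2(\M'_t)}\le Ct^{1/2}\|u\|_{H^3(\M)}$, i.e. \eqref{eq:integral_error_l2}. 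For \eqref{eq:integral_error_h1}, applying $\nabla_\bx$ either lands on the explicit factor $u(\bx)$ (harmless) or on a kernel, and $\nabla_\bx R_t(\bx,\by)$ carries an extra factor $\sim|\bx-\by|/t$, i.e. a loss of $t^{-1/2}$ in the kernel $L^1$-norms; equivalently, one integrates by parts in $\by$ on $\M$ (no boundary term, since the kernel vanishes near $\p\M$ for $\bx\in\M'_t$) to move the derivative onto $u$, trading one power of $t^{1/2}$ for one extra derivative of $u$. Either way $\|\nabla r\|_{L^2(\M'_t)}\le C\|u\|_{H^3(\M)}$.

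I expect the main obstacle to be the bookkeeping in the third paragraph: tracking all the geometric corrections (ambient-versus-geodesic distance, the metric determinant, the second fundamental form), checking that after the $\bar R'=-R$ cancellation only the advertised powers of $t$ remain, uniformly for $\bx\in\M'_t$ up to $\p\M'_t$, and — the delicate point that lets one assume only $u\in H^3$ rather than $u\in C^3$ — writing each residual term as a genuine convolution of a second- or third-order derivative of $u$ against a rescaled kernel of controlled $L^1$ norm.
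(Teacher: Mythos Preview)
The paper does not actually prove Theorem~\ref{thm:integral_error}: immediately before the statement it says the result ``is proved in \cite{SS-rate}'', and no argument is given. So there is no in-paper proof to compare your proposal against.

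That said, your outline is the standard PIM consistency argument and is essentially what the cited reference carries out. The key reduction you identify is correct and is precisely why the bound is stated on $\M'_t$ rather than on $\M$: since $\bar R_t(\bx,\cdot)$ is supported in $B(\bx,2\sqrt t)$ and, by~\eqref{eq:domain}, that ball misses $\p\M$ whenever $\bx\in\M'_t$, the boundary integral in \eqref{eq:integral-pim} vanishes identically on $\M'_t$ and one is left with the purely interior remainder $R_{\mathrm{int}}$. Your kernel-moment identity $\frac1t\int_{\R^k}R_t(\bxi)\xi_i\xi_j\,\mathd\bxi=2\delta_{ij}\int_{\R^k}\bar R_t(\bxi)\,\mathd\bxi$ (from $\bar R'=-R$) is exactly the mechanism that converts the quadratic Taylor term into the $\int\bar R_t\,\Delta_\M u$ piece, and the $L^1_\bxi$-norm bookkeeping (Minkowski/Young) to trade $C^3$ control for $H^3$ control is the right device. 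For the gradient estimate, either of your two options (absorb the extra $t^{-1/2}$ from $\nabla_\bx R_t$, or shift the derivative to $\by$ using $\nabla_\bx R_t=-\nabla_\by R_t+O(\text{curvature})$ with no boundary term on $\M'_t$) works. The only place to be careful is, as you note yourself, keeping the geometric corrections (ambient vs.\ geodesic distance, volume element, second fundamental form) uniform up to $\p\M'_t$; this is routine once $t\le T_0$ is below the reach of $\p\M$.
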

Using the consistency result, Theorem \ref{thm:integral_error} and the stability results presented in Section 
\ref{sec:stability}, we can get following theorem which shows the convergence of the integral equation \eqref{eq:integral}.
\begin{theorem}
  \label{thm:converge-integral}
Let $u(\bx)$ be solution of \eqref{eq:dirichlet} and $u_t(\bx)$ be solution of \eqref{eq:integral}. There exists $C>0$ only depends on $\M$ and $\p\M$, such that
\begin{eqnarray*}
  \|u-u_t\|_{H^1(\M'_t)}\le Ct^{1/4}\|f\|_{H^1(\M)}
\end{eqnarray*}
\end{theorem}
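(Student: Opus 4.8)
The plan is to combine the consistency estimate of Theorem~\ref{thm:integral_error} with the stability estimate of Theorem~\ref{thm:stability}, applied to the error function $e_t(\bx) = u(\bx) - u_t(\bx)$. First I would note that the solution $u$ of the Dirichlet problem~\eqref{eq:dirichlet} vanishes on $\p\M$; however, $u$ does not vanish on all of $\V_t$, only on $\p\M$. To set up the argument cleanly, I would introduce a modified comparison function that agrees with $u$ on $\M'_t$ and is set to zero on $\V_t$, or alternatively argue that since $u\in C(\M)$ with $u|_{\p\M}=0$ and $|\V_t| = O(\sqrt t)$, the values of $u$ on $\V_t$ are $O(\sqrt t)$ in sup norm (because $u$ is Lipschitz and every point of $\V_t$ is within $2\sqrt t$ of $\p\M$), so replacing $u$ by the truncated version on $\V_t$ introduces an error that is controlled in $H^1(\M'_t)$ by $Ct^{1/2}\|u\|_{C^1}$ — of lower order than the claimed $t^{1/4}$.

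Second, with $e_t$ the difference between (truncated) $u$ and $u_t$, both quantities vanish on $\V_t$, so $e_t$ satisfies an integral equation of exactly the form~\eqref{eq:integral-equation} with right-hand side $r(\bx)$ equal to the consistency residual from Theorem~\ref{thm:integral_error} (modulo the lower-order truncation corrections). Applying Theorem~\ref{thm:stability} then yields
\begin{eqnarray*}
\|e_t\|_{H^1(\M'_t)} \le C\|r\|_{L^2(\M'_t)} + Ct\|\nabla r\|_{L^2(\M'_t)}.
\end{eqnarray*}
Plugging in the bounds $\|r\|_{L^2(\M'_t)} \le Ct^{1/2}\|u\|_{H^3(\M)}$ and $\|\nabla r\|_{L^2(\M'_t)} \le C\|u\|_{H^3(\M)}$ from~\eqref{eq:integral_error_l2}–\eqref{eq:integral_error_h1}, the first term gives $Ct^{1/2}\|u\|_{H^3}$ and the second gives $Ct\|u\|_{H^3}$, so altogether $\|e_t\|_{H^1(\M'_t)} \le Ct^{1/2}\|u\|_{H^3(\M)}$. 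Finally I would invoke elliptic regularity for the Laplace–Beltrami operator with Dirichlet data, $\|u\|_{H^3(\M)} \le C\|f\|_{H^1(\M)}$, to convert the right-hand side into the stated $\|f\|_{H^1(\M)}$.

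The one genuine subtlety — and the place where the rate is actually pinned down — is reconciling the exponent: the stability/consistency combination as I sketched it gives $t^{1/2}$, but the theorem claims only $t^{1/4}$. I expect the $t^{1/4}$ (rather than $t^{1/2}$) to come from the truncation/boundary-layer correction near $\V_t$: estimating $u$ and its gradient on the shell $\V_t$ of width $2\sqrt t$, together with the mismatch in the coercivity corollaries (Corollary~\ref{cor:coercivity-inner}, Corollary~\ref{cor:l2-inner}) which carry the extra boundary term $\frac1t\int_{\M'_t}u^2\big(\int_{\V_t}R_t\,\mathd\by\big)$, produces a contribution scaling like $t^{1/4}$ in $H^1$. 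So the main obstacle I anticipate is carefully bounding this boundary-layer term: one must show that the difference between the true solution $u$ and its zero-extension off $\M'_t$, measured through the stability functional, is $O(t^{1/4})$ and not worse, using that $u$ is smooth and small near $\p\M$. The rest of the argument is a routine assembly of the already-established stability and consistency lemmas plus standard elliptic regularity.
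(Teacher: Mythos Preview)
Your overall architecture (consistency + stability applied to $e_t=u-u_t$) is right, and you correctly locate the boundary layer near $\V_t$ as the crux. But there is a genuine gap in the route you propose, and it is precisely the step you flag as ``routine'': you cannot apply Theorem~\ref{thm:stability} as a black box after truncating $u$ to zero on $\V_t$.

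On $\M'_t$ the truncated and untruncated $u$ agree, so there is no ``error in $H^1(\M'_t)$'' from truncation; the truncation instead changes the \emph{residual} in~\eqref{eq:integral-equation}. If $\tilde u$ is the zero-extension, then for $\bx\in\M'_t$
\[
\frac{1}{t}\int_{\M}R_t(\bx,\by)\bigl(\tilde u(\bx)-\tilde u(\by)\bigr)\mathd\by
=\frac{1}{t}\int_{\M}R_t(\bx,\by)\bigl(u(\bx)-u(\by)\bigr)\mathd\by
+\frac{1}{t}\int_{\V_t}R_t(\bx,\by)\,u(\by)\,\mathd\by,
\]
so $\tilde e_t=\tilde u-u_t$ satisfies~\eqref{eq:integral-equation} with right side $r(\bx)+s(\bx)$ where $s(\bx)=\frac{1}{t}\int_{\V_t}R_t(\bx,\by)u(\by)\mathd\by$. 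Now $|u|\le C\sqrt t$ on $\V_t$ and $\int_{\V_t}R_t(\bx,\by)\mathd\by\le C$, so $|s(\bx)|\le C/\sqrt t$, and $s$ is supported in a shell of measure $O(\sqrt t)$; hence $\|s\|_{L^2(\M'_t)}\sim t^{-1/4}$. Feeding this into Theorem~\ref{thm:stability} gives a bound that blows up, not $t^{1/2}$. So your sketch does not give $t^{1/2}$ ``before'' the boundary layer correction --- the boundary layer is what prevents the black-box application from working at all.

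The paper avoids this by \emph{not} invoking Theorem~\ref{thm:stability}. It tests the equation for $e_t$ against $e_t$ itself, obtaining the bilinear form on the left. The cross term $\frac{1}{t}\int_{\M'_t}e_t(\bx)\int_{\V_t}R_t(\bx,\by)u(\by)\mathd\by\mathd\bx$ is split by Young's inequality: half of it produces $\frac{1}{2t}\int_{\M'_t}|e_t|^2\int_{\V_t}R_t\,\mathd\by\,\mathd\bx$, which is \emph{absorbed} into the coercivity term, and the other half is $\frac{C}{t}\int_{\V_t}|u|^2\le C\sqrt t\,\|f\|_{H^1}^2$ by Lemma~\ref{lem:u-boundary}. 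This yields
\[
\|e_t\|_{L^2(\M'_t)}^2\le C\|e_t\|_{L^2(\M'_t)}\|r\|_{L^2(\M'_t)}+C\sqrt t\,\|f\|_{H^1}^2
\le C\sqrt t\,\|f\|_{H^1}\|e_t\|_{L^2(\M'_t)}+C\sqrt t\,\|f\|_{H^1}^2,
\]
from which $\|e_t\|_{L^2(\M'_t)}\le Ct^{1/4}\|f\|_{H^1}$; the gradient estimate is then obtained from the representation formula for $e_t$ and Corollary~\ref{cor:coercivity-inner}. The essential idea you are missing is that the boundary-layer contribution is too large in $L^2$ of the residual but is controllable at the level of the bilinear form, because it can be half-absorbed into the good coercive term. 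That absorption is what produces the $t^{1/4}$.
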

\begin{proof} 
Let $e_t(\bx)=u(\bx)-u_t(\bx)$, first of all, we have
\begin{align}
  \label{eq:error-1}
&  \frac{1}{t}\int_{\M'_t}e_t(\bx)\int_{\M}R_t(\bx,\by)(e_t(\bx)-e_t(\by))\mathd\by\mathd\bx\\
=& \frac{1}{t}\int_{\M'_t}e_t(\bx)\int_{\M'_t}R_t(\bx,\by)(e_t(\bx)-e_t(\by))\mathd\by\mathd\bx+
 \frac{1}{t}\int_{\M'_t}e_t(\bx)\int_{\V_t}R_t(\bx,\by)(e_t(\bx)-e_t(\by))\mathd\by\mathd\bx\nonumber\\
=& \frac{1}{2t}\int_{\M'_t}\int_{\M'_t}R_t(\bx,\by)(e_t(\bx)-e_t(\by))^2\mathd\bx\mathd\by+\frac{1}{t}\int_{\M'_t}e_t(\bx)\int_{\V_t}R_t(\bx,\by)(e_t(\bx)-e_t(\by))\mathd\by\mathd\bx.
\nonumber
\end{align}
The second term can be calculated as
\begin{eqnarray}
  \label{eq:error-2}
&&  \frac{1}{t}\int_{\M'_t}e_t(\bx)\int_{\V_t}R_t(\bx,\by)(e_t(\bx)-e_t(\by))\mathd\by\mathd\bx\\
&=&\frac{1}{t}\int_{\M'_t}|e_t(\bx)|^2\left(\int_{\V_t}R_t(\bx,\by)\mathd\by\right)\mathd\bx-\frac{1}{t}\int_{\M'_t}e_t(\bx)\left(\int_{\V_t}R_t(\bx,\by)u(\by)\mathd\by\right)\mathd\bx.
\nonumber
\end{eqnarray}
Here we use the definition of $e_t$ and the volume constraint condition $u_t(\bx)=0,\;\bx\in \V_t$ to get that $e_t(\bx)=u(\bx),\;\bx\in \V_t$. 

The first term is positive which is good for us. We only need to bound the second term of \eqref{eq:error-2} to show that it can be controlled by 
the first term. First, the second term can be bounded as following
\begin{eqnarray}
  \label{eq:error-3}\quad\quad
&&  \frac{1}{t}\left|\int_{\M'_t}e_t(\bx)\left(\int_{\V_t}R_t(\bx,\by)u(\by)\mathd\by\right)\mathd\bx\right|\\
&\le& \frac{1}{t}\int_{\M'_t}|e_t(\bx)|\left(\int_{\V_t}R_t(\bx,\by)\mathd\by\right)^{1/2}\left(\int_{\V_t}R_t(\bx,\by)|u(\by)|^2\mathd\by\right)^{1/2}\mathd\bx\nonumber\\
&\le&\frac{1}{t}\left(\int_{\M'_t}\frac{1}{2}|e_t(\bx)|^2\left(\int_{\V_t}R_t(\bx,\by)\mathd\by\right)\mathd\bx+
2\int_{\M'_t}\left(\int_{\V_t}R_t(\bx,\by)|u(\by)|^2\mathd\by\right)\mathd\bx\right)\nonumber\\
&\le& \frac{1}{2t}\int_{\M'_t}|e_t(\bx)|^2\left(\int_{\V_t}R_t(\bx,\by)\mathd\by\right)\mathd\bx+
\frac{2}{t}\int_{\V_t}|u(\by)|^2\left(\int_{\M'_t} R_t(\bx,\by)\mathd \bx\right)\mathd\by\nonumber\\
&\le& \frac{1}{2t}\int_{\M'_t}|e_t(\bx)|^2\left(\int_{\V_t}R_t(\bx,\by)\mathd\by\right)\mathd\bx+
\frac{C}{t}\int_{\V_t}|u(\by)|^2\mathd\by\nonumber\\
&\le& \frac{1}{2t}\int_{\M'_t}|e_t(\bx)|^2\left(\int_{\V_t}R_t(\bx,\by)\mathd\by\right)\mathd\bx+
C\sqrt{t}\|f\|^2_{H^1(\M)}.\nonumber
\end{eqnarray}
Here we use Lemma \ref{lem:u-boundary} in Appendix A to get the last inequality.
By substituting \eqref{eq:error-3}, \eqref{eq:error-2} in \eqref{eq:error-1}, we get
\begin{eqnarray}
  \label{eq:error-est}
&& \left| \frac{1}{t}\int_{\M'_t}e_t(\bx)\int_{\M}R_t(\bx,\by)(e_t(\bx)-e_t(\by))\mathd\by\mathd\bx\right|\\
&\ge&  \frac{1}{2t}\int_{\M'_t}\int_{\M'_t}R_t(\bx,\by)(e_t(\bx)-e_t(\by))^2\mathd\bx\mathd\by\nonumber\\
&&+
\frac{1}{2t}\int_{\M'_t}|e_t(\bx)|^2\left(\int_{\V_t}R_t(\bx,\by)\mathd\by\right)\mathd\bx-C\|f\|_{H^1(\M)}^2\sqrt{t}\nonumber.
\end{eqnarray}
This is the key estimate we used to get convergence. 

Notice that $e_t(\bx)$ satisfying an integral equation,
\begin{eqnarray}
\label{eq:integral-error}
  \frac{1}{t}\int_{\M}R_t(\bx,\by)(e_t(\bx)-e_t(\by))\mathd\by=r(\bx),\quad \forall \bx\in \M'_t,
\end{eqnarray}
where $r(\bx)=\frac{1}{t}\int_{\M}R_t(\bx,\by)(u(\bx)-u(\by))\mathd\by-\int_{\M}\bar{R}_t(\bx,\by)f(\by)\mathd\by$.

From Theorem \ref{thm:integral_error}, we know that 
\begin{eqnarray}
  \label{eq:residual-l2}
  &&\left\|r(\bx)\right\|_{L^2(\M'_t)}\le Ct^{1/2}\|u\|_{H^3(\mathcal{M})}\le C\sqrt{t}\|f\|_{H^1(\mathcal{M})},\\
 \label{eq:residual-dl2}
&&\left\|\nabla r(\bx)\right\|_{L^2(\M'_t)}\le C\|u\|_{H^3(\mathcal{M})}\le C\|f\|_{H^1(\mathcal{M})}.
\end{eqnarray}
 Now we can get $L^2$ estimate of $e_t$.  
Using Corollary \ref{cor:l2-inner}, we have 
\begin{align}
  \label{eq:est-l2}
  \|e_t\|_{2,\M'_t}^2&\le\frac{C}{t}\int_{\M'_t}\int_{\M'_t}R_t(\bx,\by)(e_t(\bx)-e_t(\by))^2\mathd\bx\mathd\by
\\
&+\frac{C}{t}\int_{\M'_t}|e_t(\bx)|^2\left(\int_{\V_t}R_t(\bx,\by)\mathd\by\right)\mathd\bx\nonumber\\
\text{(from \eqref{eq:error-est})}\quad
&\le  \left| \frac{C}{t}\int_{\M'_t}e_t(\bx)\int_{\M}R_t(\bx,\by)(e_t(\bx)-e_t(\by))\mathd\by\mathd\bx\right|+
C\|f\|_{H^1(\M)}^2\sqrt{t}\nonumber\\
\text{(from \eqref{eq:integral-error})}\quad&\le C\|e_t\|_{2,\M'_t}\|r\|_{2,\M'_t}+C\|f\|_{H^1(\M)}^2\sqrt{t}\nonumber\\
\text{(from \eqref{eq:residual-l2})}\quad&\le C\|f\|_{H^1(\M)}\|e_t\|_{2,\M'_t}\sqrt{t}+C\|f\|_{H^1(\M)}^2\sqrt{t}.\nonumber
\end{align}
This gives that
\begin{eqnarray}
  \label{eq:error-l2}
  \|e_t\|_{2, \M'_t}\le Ct^{1/4}\|f\|_{H^1(\M)}.
\end{eqnarray}

Next, we turn to estimate the $L^2$ norm of $\nabla e_t$ in $\M'_t$. 
Using the integral equation \eqref{eq:integral-error}, $e_t$ has following expression
\begin{eqnarray}
  \label{eq:error-exp}
  &&e_t(\bx)=\frac{1}{w_t(\bx)}\int_{\M}R_t(\bx,\by)e_t(\by)\mathd\by+\frac{t}{w_t(\bx)}r(\bx)\\
&=&\frac{1}{w_t(\bx)}\int_{\M'_t}R_t(\bx,\by)e_t(\by)\mathd\by+\frac{1}{w_t(\bx)}\int_{\V_t}R_t(\bx,\by)u(\by)\mathd\by+\frac{t}{w_t(\bx)}r(\bx).
\nonumber
\end{eqnarray}
Then $\|\nabla e_t\|_{2,\M'_t}^2$ can be bounded as following
\begin{eqnarray}
  \label{eq:error-grad}
 && \|\nabla e_t\|_{2,\M'_t}^2\le C\left\|\nabla\left(\frac{1}{w_t(\bx)}\int_{\M'_t}R_t(\bx,\by)e_t(\by)\mathd\by\right)\right\|_{2,\M'_t}^2\\
&&+
C\left\|\nabla\left(\frac{1}{w_t(\bx)}\int_{\V_t}R_t(\bx,\by)u(\by)\mathd\by\right)\right\|_{2,\M'_t}^2
+Ct^2\left\|\nabla\left(\frac{r(\bx)}{w_t(\bx)}\right)\right\|_{2,\M'_t}^2.\nonumber
\end{eqnarray}
Corollary \ref{cor:coercivity-inner} gives a bound the first term of \eqref{eq:error-grad}.
\begin{align}
  \label{eq:error-dl2-1}
&  \left\|\nabla\left(\frac{1}{w_t(\bx)}\int_{\M'_t}R_t(\bx,\by)e_t(\by)\mathd\by\right)\right\|_{2,\M'_t}^2\\
&\le 
\frac{C}{t}\int_{\M'_t}\int_{\M'_t}R_t(\bx,\by)(e_t(\bx)-e_t(\by))^2\mathd\by\mathd\bx
+\frac{C}{t}\int_{\M'_t}|e_t(\bx)|^2\left(\int_{\V_t}R_t(\bx,\by)\mathd\by\right)\mathd\bx.\nonumber
\end{align}

The second and third terms of \eqref{eq:error-grad} can be bounded by direct calculation.
\begin{eqnarray}
  \label{eq:error-dl2-3}
  \left\|\nabla\left(\frac{r(\bx)}{w_t(\bx)}\right)\right\|_{2,\M'_t}^2&\le& C\left\|\frac{\nabla r(\bx)}{w_t(\bx)}\right\|_{2,\M'_t}^2
+C\left\|\frac{ r(\bx)\nabla w_t(\bx)}{(w_t(\bx))^2}\right\|_{2,\M'_t}^2\\
&\le& C\left\|\nabla r(\bx)\right\|_{2,\M'_t}^2
+\frac{C}{t}\left\|r(\bx)\right\|_{2,\M'_t}^2\nonumber\\
&\le& C\|f\|_{H^1(\M)}^2,\nonumber
\end{eqnarray}
and
\begin{eqnarray}
  \label{eq:est-grad-1}
&&\left|  \nabla\left(\frac{1}{w_t(\bx)}\int_{\V_t}R_t(\bx,\by)u(\by)\mathd\by\right)\right|\\
&\le&\left|\frac{\nabla w_t(\bx)}{(w_t(\bx))^2}\int_{\V_t}R_t(\bx,\by)u(\by)\mathd\by\right|
+\left|\frac{1}{w_t(\bx)}\int_{\V_t}\nabla R_t(\bx,\by)u(\by)\mathd\by\right|\nonumber\\
&\le& C\|f\|_{H^1(\M)}\int_{\V_t}R_t(\bx,\by)\mathd\by+C\|f\|_{H^1(\M)}\int_{\V_t}\left|R'_t(\bx,\by)\right|\mathd\by.\nonumber
\end{eqnarray}
Then the second term of \eqref{eq:error-dl2-1} has following bound
\begin{eqnarray}
  \label{eq:error-dl2-2}
&& \quad\quad\quad \left\|\nabla\left(\frac{1}{w_t(\bx)}\int_{\V_t}R_t(\bx,\by)u(\by)\mathd\by\right)\right\|_{2,\M'_t}^2\\
&\le& C\|f\|_{H^1(\M)}^2 \int_{\M'_t}\left(\int_{\V_t}R_t(\bx,\by)\mathd\by\right)^2\mathd \bx
+C\|f\|_{H^1(\M)}^2 \int_{\M'_t}  \left(\int_{\V_t}\left|R'_t(\bx,\by)\right|\mathd\by\right)^2\mathd\bx\nonumber\\
&\le &C \|f\|_{H^1(\M)}^2\int_{\M'_t}\int_{\V_t}R_t(\bx,\by)\mathd\by\mathd \bx+C \|f\|_{H^1(\M)}^2\int_{\M'_t}  \int_{\V_t}\left|R'_t(\bx,\by)\right|\mathd\by\mathd\bx\nonumber\\
&\le&C\|f\|_{H^1(\M)}^2|\V_t|\le C\|f\|_{H^1(\M)}^2\sqrt{t}.\nonumber
\end{eqnarray}
Now we have the bound of $\|\nabla e_t\|_{2,\M'_t}$ by combining \eqref{eq:error-grad}, \eqref{eq:error-dl2-3}, \eqref{eq:error-dl2-1} and \eqref{eq:error-dl2-2}
\begin{eqnarray}
\label{eq:error-grad-1}
  \|\nabla e_t\|_{2,\M'_t}^2&\le& \frac{C}{t}\int_{\M'_t}\int_{\M'_t}R_t(\bx,\by)(e_t(\bx)-e_t(\by))^2\mathd\bx\mathd\by\\
&&+\frac{C}{t}\int_{\M'_t}|e_t(\bx)|^2\left(\int_{\V_t}R_t(\bx,\by)\mathd\by\right)\mathd\bx+
C\|f\|_{H^1(\M)}^2\sqrt{t}.\nonumber
\end{eqnarray}

Then the bound of $\|\nabla e_t\|_{2,\M'_t}$ can be obtained also from \eqref{eq:error-grad-1}
\begin{align}
  \label{eq:est-dl2}
  \|\nabla e_t\|_{2,\M'_t}^2\le&\frac{C}{t}\int_{\M'_t}\int_{\M'_t}R_t(\bx,\by)(e_t(\bx)-e_t(\by))^2\mathd\bx\mathd\by\\
&+
\frac{C}{t}\int_{\M'_t}|e_t(\bx)|^2\left(\int_{\V_t}R_t(\bx,\by)\mathd\by\right)\mathd\bx+
C\|f\|_{H^1(\M)}^2\sqrt{t}\nonumber\\
\text{(from \eqref{eq:error-est})}\quad
\le&  \left| \frac{C}{t}\int_{\M'_t}e_t(\bx)\int_{\M}R_t(\bx,\by)(e_t(\bx)-e_t(\by))\mathd\by\mathd\bx\right|+ C\|f\|_{H^1(\M)}^2\sqrt{t}\nonumber\\
\text{(from \eqref{eq:integral-error})}\quad\le&\|e_t\|_{2,\M'_t}\|r\|_{2,\M'_t}+ C\|f\|_{H^1(\M)}^2\sqrt{t}\nonumber\\
\text{(from \eqref{eq:residual-l2})}\quad\le&C\|f\|_{H^1(\M)}\|e_t\|_{2,\M'_t}\sqrt{t}+ C\|f\|_{H^1(\M)}^2\sqrt{t}\nonumber\\
\text{(from \eqref{eq:error-l2})}\quad\le&Ct^{3/4}\|f\|_{H^1(\M)}+ C\|f\|_{H^1(\M)}^2\sqrt{t}.\nonumber
\end{align}
Then we have
\begin{eqnarray}
  \label{eq:error-dl2}
  \|\nabla e_t\|_{2, \M'_t}\le Ct^{1/4}\|f\|_{H^1(\M)}.
\end{eqnarray}
The proof is completed by putting \eqref{eq:error-l2} and \eqref{eq:error-dl2} together.
\end{proof}

\subsection{Discretization of the integral equation}
\label{sec:discrete}
Suppose $\bfu=[u_1,\cdots,u_n]^t$ is the discrete solution which means that it solves \eqref{eq:dis}.
First, we interpolate the discrete solution from the point cloud $P=\{\bfp_1,\cdots,\bfp_n\}$ to the whole 
manifold $\M$. Fortunately, the discrete equation \eqref{eq:dis} gives a natural interpolation. 
\begin{equation}
  \label{eq:interpolation}
  u_{t,h}(\bx)=\left\{\begin{array}{cc}\D
\frac{1}{w_{t,h}(\bx)}\left(\sum_{\bfp_j\in\M}R_t(\bx,\bfp_j)u_jV_j
 +t\sum_{\bfp_j\in\M}\bar{R}_t(\bx,\bfp_j)f(\bfp_j)V_j\right),&\bx\in \M'_t,\\
0,&\bx\in \V_t.
\end{array}\right.
\end{equation}
Then, we have following theorem regarding the convergence from $u_{t,h}$ to $u_t$.
\begin{theorem}
Let $u_t(\bx)$ be the solution of the problem~~\eqref{eq:integral} and $\bfu$ be the solution of 
the problem~\eqref{eq:dis}. If 
$f\in C^1(\M)$
in both problems, then there exists constants
$C>0$ depending only on $\M$ and  $\p \M$ so that 
\begin{eqnarray}
\|u_{t,h} - u_{t}\|_{H^1(\M'_t)} &\leq& \frac{Ch}{t^{3/2}}\|f\|_{C^1(\M)},\nonumber
\end{eqnarray}
as long as $t$ and $\frac{h}{\sqrt{t}}$ are both small enough.
\label{thm:converge-dis}
\end{theorem}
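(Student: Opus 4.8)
The plan is to treat $u_{t,h}-u_t$ as a new error function and feed it into the stability estimate of Theorem \ref{thm:stability}. Concretely, set $e_{t,h}=u_{t,h}-u_t$ on $\M'_t$ and $e_{t,h}=0$ on $\V_t$ (both $u_{t,h}$ and $u_t$ vanish there). The first step is to derive the integral equation that $e_{t,h}$ satisfies on $\M'_t$. Recall that the interpolation formula \eqref{eq:interpolation} is, by construction, an exact rearrangement of the discrete equation \eqref{eq:dis}: multiplying through by $w_{t,h}(\bx)$ and using $w_{t,h}(\bx)=\sum_j R_t(\bx,\bfp_j)V_j$, one checks that for $\bx=\bfp_i\in\M'_t$ the formula reproduces $u_i$, and more importantly that for all $\bx\in\M'_t$,
\begin{equation*}
\frac{1}{t}\sum_{\bfp_j\in\M}R_t(\bx,\bfp_j)(u_{t,h}(\bx)-u_jV_j\cdot(\cdots)) = \text{(discrete right-hand side)} ,
\end{equation*}
so that $u_{t,h}$ solves a \emph{discretized} version of \eqref{eq:integral-equation} with the sums $\sum_{\bfp_j}(\cdots)V_j$ in place of the integrals $\int_\M(\cdots)\mathd\by$. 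Subtracting the continuous integral equation \eqref{eq:integral} satisfied by $u_t$, we get
\begin{equation*}
\frac{1}{t}\int_\M R_t(\bx,\by)(e_{t,h}(\bx)-e_{t,h}(\by))\mathd\by = r_{t,h}(\bx),\qquad \bx\in\M'_t,
\end{equation*}
where $r_{t,h}$ collects the differences between integrals over $\M$ and the corresponding Riemann-type sums over $P$, applied to $u_{t,h}$ (or $u_t$) and to $f$. This is exactly the setting of Theorem \ref{thm:stability}, which gives
\begin{equation*}
\|e_{t,h}\|_{H^1(\M'_t)}\le C\|r_{t,h}\|_{L^2(\M'_t)}+Ct\|\nabla r_{t,h}\|_{L^2(\M'_t)} .
\end{equation*}

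The second step is to bound $\|r_{t,h}\|_{L^2(\M'_t)}$ and $\|\nabla r_{t,h}\|_{L^2(\M'_t)}$ using the $h$-integrability assumption on $(P,\mathbf V)$. The quantity $r_{t,h}(\bx)$ is a finite sum of terms of the form $\int_\M g_\bx(\by)\mathd\by-\sum_{\bfp_j\in\M}g_\bx(\bfp_j)V_j$, where $g_\bx(\by)$ is (up to the $1/t$ prefactor) one of $R_t(\bx,\by)u_{t,h}(\by)$, $R_t(\bx,\by)$, or $\bar R_t(\bx,\by)f(\by)$. By the integrability assumption each such difference is bounded by $Ch\,|\mathrm{supp}(g_\bx)|\,\|g_\bx\|_{C^1(\M)}$. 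Since $R$ is supported in $[0,1]$, the support in $\by$ of $R_t(\bx,\cdot)$ has volume $O(t^{k/2})$, and the $C^1$ norm in $\by$ of $R_t(\bx,\cdot)$ is $O(t^{-k/2-1/2})$ because each $\by$-derivative of $R\!\left(\frac{\|\bx-\by\|^2}{4t}\right)$ brings down a factor $\frac{|\bx-\by|}{t}=O(t^{-1/2})$ on the support; together with the normalizing constant $(4\pi t)^{-k/2}$ this is where the negative powers of $t$ originate. Carrying the bookkeeping through — including a crude bound $\|u_{t,h}\|_{C^1}\le C t^{-O(1)}\|f\|_{C^1}$, or better, differentiating the interpolation formula directly and using $w_{t,h}\ge c\,t^{-k/2}$ — yields pointwise bounds $|r_{t,h}(\bx)|\le \frac{Ch}{t^{?}}\|f\|_{C^1(\M)}$ and $|\nabla r_{t,h}(\bx)|\le \frac{Ch}{t^{?}}\|f\|_{C^1(\M)}$; integrating over $\M'_t$ (whose volume is $O(1)$) and plugging into the stability bound, the powers of $t$ must combine to give the claimed $\frac{Ch}{t^{3/2}}$.

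A cleaner route for the same step, and the one I would actually write up, is to avoid differentiating $r_{t,h}$ and instead mimic the proof of Theorem \ref{thm:converge-integral}: test the error equation against $e_{t,h}$ itself, use the coercivity of Corollary \ref{cor:coercivity-inner} and the $L^2$-stability of Corollary \ref{cor:l2-inner} to control $\|e_{t,h}\|_{L^2(\M'_t)}$ first, then control $\|\nabla e_{t,h}\|_{L^2(\M'_t)}$ by reusing the expression $e_{t,h}(\bx)=\frac{1}{w_t(\bx)}\int_\M R_t(\bx,\by)e_{t,h}(\by)\mathd\by+\frac{t}{w_t(\bx)}r_{t,h}(\bx)$ together with the direct gradient bound on $\frac{r_{t,h}}{w_t}$. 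In either approach, the crux — and the step I expect to be the main obstacle — is the careful estimation in the second paragraph: getting the \emph{sharp} power $t^{-3/2}$ rather than a wasteful one requires keeping track of exactly how many $\by$-derivatives act on the kernel, how the support restriction interacts with the factor $\frac{1}{t}$ in front, and how the weight $w_{t,h}(\bx)\sim t^{-k/2}$ cancels the normalization in $R_t$. One must also handle the boundary layer term $\int_{\V_t}R_t(\bx,\by)(\cdots)$ that appears when splitting $\int_\M=\int_{\M'_t}+\int_{\V_t}$, exactly as in \eqref{eq:error-2}–\eqref{eq:error-3}, but this is now harmless because $e_{t,h}=0$ on $\V_t$, so that term contributes only the nonnegative quantity $\frac{1}{t}\int_{\M'_t}|e_{t,h}(\bx)|^2(\int_{\V_t}R_t(\bx,\by)\mathd\by)\mathd\bx$ on the good side of the inequality. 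Finally, the hypothesis that $t$ and $h/\sqrt t$ are small enough is used to absorb the $\|e_{t,h}\|_{L^2}$ factor appearing on the right-hand side after the Cauchy–Schwarz/Young splitting, exactly as $t^{1/4}$ was absorbed in \eqref{eq:est-l2}.
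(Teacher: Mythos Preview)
Your overall framework is the same as the paper's: apply the stability estimate Theorem~\ref{thm:stability} to $e_{t,h}=u_{t,h}-u_t$ (which indeed vanishes on $\V_t$), so that the whole proof reduces to a consistency bound on $r_{t,h}=L_t e_{t,h}$ in $L^2$ and on $\nabla r_{t,h}$. The paper packages this consistency bound as its Theorem~\ref{thm:dis_error}, and then Theorem~\ref{thm:converge-dis} follows in one line from $\frac{Ch}{t^{3/2}}+Ct\cdot\frac{Ch}{t^{2}}=\frac{Ch}{t^{3/2}}$. So the skeleton is right.

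The gap is in your treatment of the consistency step itself. Your proposed route---apply the $h$-integrability hypothesis to $g_\bx(\by)=R_t(\bx,\by)u_{t,h}(\by)$ via a $C^1$ bound on $u_{t,h}$---does \emph{not} yield the dimension-free exponent $t^{-3/2}$. The best $L^\infty$ control one has on $u_{t,h}$ is $\|u_{t,h}\|_\infty\le Ct^{-k/4}\|f\|_\infty$ (this is exactly the bound $\|T_{t,h}\|_\infty\le Ct^{-d/4}$ in Theorem~\ref{thm:bound}), and carrying that through your bookkeeping produces $\|r_{t,h}\|_{L^2}\le Ch\,t^{-3/2-k/4}$, not $Ch\,t^{-3/2}$. (Incidentally, $w_{t,h}(\bx)$ is of order $1$, not $c\,t^{-k/2}$: the normalization $C_t=(4\pi t)^{-k/2}$ is already built into $R_t$, and the sum approximates $\int_\M R_t(\bx,\by)\mathd\by\sim 1$.)

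What the paper does instead in the proof of Theorem~\ref{thm:dis_error} is to \emph{avoid any pointwise bound on $u_{t,h}$}. It splits $u_{t,h}=a_{t,h}+c_{t,h}$, substitutes the definition $a_{t,h}(\by)=\frac{1}{w_{t,h}(\by)}\sum_i R_t(\by,\bfp_i)u_iV_i$ directly into $(L'_t-L'_{t,h})a_{t,h}$, and collapses the resulting expression to $\sum_i u_iV_i\,A_i(\bx)$, where each $A_i(\bx)$ is a localized quadrature error of size $O(h/\sqrt{t})$ supported on $|\bx-\bfp_i|\le 4\sqrt{t}$. A Cauchy--Schwarz in the index $i$ then gives $\|(L'_t-L'_{t,h})a_{t,h}\|_{L^2(\M'_t)}\le \frac{Ch}{t^{3/2}}\bigl(\sum_i u_i^2V_i\bigr)^{1/2}$. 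The final ingredient---which your sketch does not mention---is a \emph{discrete} coercivity lemma (Lemma~\ref{lem:elliptic_dis}) showing that the discrete solution itself satisfies $\sum_i u_i^2V_i\le C\|f\|_\infty^2$. This a~priori $l^2$ bound, not a $C^1$ bound, is what makes the sharp $t^{-3/2}$ come out.
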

To prove this theorem, we need the stability result, Theorem \ref{thm:stability}, and the 
consistency result which is given in Theorem \ref{thm:dis_error}.

To simplify the notations, we introduce some operators here.
\begin{eqnarray} \label{eq:Lt}
 L_tu(\bx)&=& \frac{1}{t}\int_{\M'_t}R_t(\bx,\by)(u(\bx)-u(\by))\mathd\by
+\frac{u(\bx)}{t}\int_{\V_t}R_t(\bx,\by)\mathd\by,\\
L'_tu(\bx)&=& \frac{1}{t}\int_{\M'_t}R_t(\bx,\by)(u(\bx)-u(\by))\mathd\by,
\label{eq:Lt-inner}
\\
  \label{eq:dis-operator}
 \quad\quad (L_{t,h}u)(\bx)&=&\frac{1}{t}\sum_{\bfp_j\in\M'_t}R_t(\bx,\bfp_j)(u(\bx)-u(\bfp_j))V_j
+\frac{u(\bx)}{t}\sum_{\bfp_j\in\V_t}R_t(\bfp_i,\bfp_j)V_j,\\
\label{eq:dis-operator-inner}
\quad\quad(L'_{t,h}u)(\bx)&=&\frac{1}{t}\sum_{\bfp_j\in\M'_t}R_t(\bx,\bfp_j)(u(\bx)-u(\bfp_j))V_j.
\end{eqnarray}
It is easy to check that $u_{t,h}$ satisfies following equation if $\bfu=[u_1,\cdots,u_n]^t$ solves \eqref{eq:dis},
\begin{eqnarray}
  \label{eq:dis_interp}
  L_{t,h}u_{t,h}=\sum_{\bfp_j\in\M}\bar{R}_t(\bx,\bfp_j)f(\bfp_j)V_j
\end{eqnarray}
Now, we can state the consistency result as following.
\begin{theorem}
Let $u_t(\bx)$ be the solution of the problem~~\eqref{eq:integral} and $\bfu$ be the solution of 
the problem~\eqref{eq:dis}. If 
$f\in C^1(\M)$ ,
in both problems, then there exists constants
$C>0$ depending only on $\M$ and  $\p \M$ so that 
\begin{eqnarray}
\|L_{t} \left(u_{t,h} - u_{t}\right)\|_{L^2(\M'_t)} &\leq& \frac{Ch}{t^{3/2}}\|f\|_{C^1(\M)},\\
\label{eqn:dis_error_l2}
\|\nabla L_{t} \left(u_{t,h} - u_{t}\right)\|_{L^2(\M'_t)} &\leq& \frac{Ch}{t^{2}}\|f\|_{C^1(\M)}.
\label{eqn:dis_error_dl2}
\end{eqnarray}
as long as $t$ and $\frac{h}{\sqrt{t}}$ are small enough.
\label{thm:dis_error}
\end{theorem}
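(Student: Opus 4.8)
The plan is to estimate the consistency error $L_t(u_{t,h}-u_t)$ by comparing the continuous equation \eqref{eq:integral} satisfied by $u_t$ with the discrete equation \eqref{eq:dis_interp} satisfied by $u_{t,h}$, and tracking how the $h$-integrable approximation property (Assumption~1) controls the difference between integrals over $\M$ and summations over $P$. First I would write $L_t(u_{t,h}-u_t) = L_t u_{t,h} - L_t u_t$ and split it as $(L_t-L_{t,h})u_{t,h} + (L_{t,h}u_{t,h} - L_t u_t)$. By \eqref{eq:dis_interp}, $L_{t,h}u_{t,h}$ equals the discrete right-hand side $\sum_{\bfp_j\in\M}\bar R_t(\bx,\bfp_j)f(\bfp_j)V_j$, while by \eqref{eq:integral}, $L_t u_t$ equals the continuous right-hand side $\int_\M \bar R_t(\bx,\by)f(\by)\mathd\by$; hence $L_{t,h}u_{t,h} - L_t u_t$ is exactly the quadrature error of the $f$-integral, which the $h$-integrable assumption applied to the function $\by\mapsto\bar R_t(\bx,\by)f(\by)$ bounds by $Ch\|\bar R_t(\bx,\cdot)f\|_{C^1}|\mathrm{supp}|$. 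Since $\bar R_t(\bx,\by)=(4\pi t)^{-k/2}\bar R(\|\bx-\by\|^2/4t)$, each $\by$-derivative costs a factor $1/\sqrt t$ and the support has measure $O(t^{k/2})$, so this term contributes $O(h/\sqrt t)$ pointwise, and after integrating $\bx$ over $\M'_t$ and taking one more $\nabla_\bx$ (another $1/\sqrt t$) one gets $O(h/t)$ in $L^2$ for the gradient — comfortably inside the claimed bounds.

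The more delicate term is $(L_t-L_{t,h})u_{t,h}$, the quadrature error of the operator applied to $u_{t,h}$ itself. Here I would use the explicit formula \eqref{eq:interpolation} for $u_{t,h}$: it is a ratio of sums, with denominator $w_{t,h}(\bx)=\sum_j R_t(\bx,\bfp_j)V_j$ bounded below by a positive constant once $h/\sqrt t$ is small (because $w_{t,h}$ is itself an $h$-approximation of $w_t$, which is bounded below by assumption (c) on $R$). The numerator involves $\sum_j R_t(\bx,\bfp_j)u_jV_j$ and $t\sum_j\bar R_t(\bx,\bfp_j)f(\bfp_j)V_j$. To apply the $h$-integrable property I need $C^1$ control of $u_{t,h}$, which I would bootstrap: from \eqref{eq:interpolation}, $u_{t,h}$ and $\nabla u_{t,h}$ are expressed through the discrete sums, and the discrete analogue of the energy estimate (together with the $L^2$ stability one expects for \eqref{eq:dis}) gives $\|u_{t,h}\|_{C^1}$ or at least a suitable weighted-$L^2$ bound in terms of $\|f\|_{C^1}$; the loss of a few extra powers of $t$ here is the source of the $t^{-3/2}$ and $t^{-2}$ in the final bounds. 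Concretely, applying the $h$-integrable estimate to $\by\mapsto R_t(\bx,\by)(u_{t,h}(\bx)-u_{t,h}(\by))$ and to $\by\mapsto R_t(\bx,\by)$ over $\V_t$ yields $|(L_t-L_{t,h})u_{t,h}(\bx)|\le \frac{Ch}{t}\cdot\frac{1}{\sqrt t}(\|u_{t,h}\|_{C^0}+\sqrt t\|\nabla u_{t,h}\|_{C^0})$, i.e. $O(h/t^{3/2})$ after accounting for the $1/t$ prefactor in $L_t$ and the $1/\sqrt t$ per derivative; the gradient version picks up one further $1/\sqrt t$, giving $O(h/t^2)$.

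The main obstacle I anticipate is obtaining the a priori $C^1$ (or strong enough weighted) bounds on $u_{t,h}$ needed to feed into the $h$-integrable assumption, since that assumption only applies to $C^1$ functions and its constant scales with $\|f\|_{C^1}$ of the integrand. This requires a discrete stability estimate — a discrete counterpart of Corollary~\ref{cor:l2-inner} and the coercivity machinery of Section~\ref{sec:stability}, applied to the linear system \eqref{eq:dis} — to bound $\|\bfu\|$ in a suitable discrete norm by $\|f\|_{C^1(\M)}$, and then careful differentiation of the kernel-convolution formula \eqref{eq:interpolation} (each $\nabla_\bx$ hitting either $R_t$, which produces $R'_t$ with an extra $1/\sqrt t$, or the weight $w_{t,h}$) to convert that into a gradient bound. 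Once these bounds are in hand, the rest is bookkeeping: collect the pointwise estimates, integrate $\bx$ over $\M'_t$ (whose measure is $O(1)$), and read off \eqref{eqn:dis_error_l2} and \eqref{eqn:dis_error_dl2}. I would also keep track of the boundary layer $\V_t$ carefully, using $|\V_t|=O(\sqrt t)$ as in the proof of Theorem~\ref{thm:converge-integral}, so that the terms involving $\int_{\V_t}R_t\,\mathd\by$ do not spoil the powers of $t$.
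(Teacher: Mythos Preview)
Your overall splitting and the identification of $L_{t,h}u_{t,h}-L_tu_t$ as the $f$-quadrature error match the paper, as does the recognition that a discrete $\ell^2$ stability bound on $\bfu$ is needed (the paper supplies this as Lemma~\ref{lem:elliptic_dis} and Corollary~\ref{thm:bound_solution_bfu}). You also correctly flag the $C^1$ control of $u_{t,h}$ as the main obstacle. The gap is that your proposed resolution does not deliver the stated exponents: applying the $h$-integrable assumption to $\by\mapsto R_t(\bx,\by)(u_{t,h}(\bx)-u_{t,h}(\by))$ requires $\|u_{t,h}\|_{C^0}$, and the only a~priori input is $\sum_i u_i^2V_i\le C\|f\|_\infty^2$. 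Converting this $\ell^2$ bound to a pointwise one via \eqref{eq:interpolation} and Cauchy--Schwarz costs a factor $\|R_t\|_\infty^{1/2}\sim t^{-k/4}$, so your route yields only $Ch/t^{3/2+k/4}$, not the dimension-free $Ch/t^{3/2}$ in the statement. Your sentence ``the loss of a few extra powers of $t$ here is the source of the $t^{-3/2}$'' hides a dimension-dependent loss that is genuinely larger than what the theorem asserts.

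The paper avoids this by never applying $h$-integrability to a function containing $u_{t,h}(\by)$. It writes $u_{t,h}=a_{t,h}+c_{t,h}$ on $\M'_t$ with $a_{t,h}(\bx)=w_{t,h}(\bx)^{-1}\sum_j R_t(\bx,\bfp_j)u_jV_j$; the piece $c_{t,h}$ has a legitimate uniform $C^1$ bound because it involves only $f$. For $a_{t,h}$, the key move is to substitute its kernel-sum form into $(L_t'-L_{t,h}')a_{t,h}$ and interchange the $j$-sum with the $\by$-quadrature, obtaining $\sum_i u_iV_i\,A_i(\bx)$ where each $A_i(\bx)$ is a quadrature error of a smooth \emph{product of kernels}, independent of $\bfu$. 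Then $|A_i|\le Ch/\sqrt t$ directly from the assumption, and since $A_i(\bx)\ne0$ only when $|\bx-\bfp_i|^2\le16t$ one may insert a factor $R(|\bx-\bfp_i|^2/32t)$ for free; Cauchy--Schwarz in $i$ \emph{after} squaring and integrating in $\bx$ then recovers $\bigl(\sum_i u_i^2V_i\bigr)^{1/2}$ without ever passing through $\|u_{t,h}\|_{C^0}$. This interchange-then-Cauchy--Schwarz step is the idea missing from your proposal.
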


\section{Convergence of the eigenvalue problem}
\label{sec:eigen}

In this section, we investigate the convergence of the eigenvalue problem~\eqref{eq:dis-eigen} to the 
eigenvalue problem~\eqref{eq:eigen}. First, we introduce some operators. 

Denote the operator $T: L^2(\mathcal{M})\rightarrow H^2(\mathcal{M})$ to be the solution operator of
the following problem 
\begin{eqnarray}
  \left\{\begin{array}{rl}
      \Delta_\M u(\bx)=f(\bx),&\bx\in \mathcal{M},\\
      u(\bx)=0,& \bx\in \p \mathcal{M}.
\end{array}\right.\nonumber
\end{eqnarray}
where $\bn$ is the out normal vector of $\mathcal{M}$.

Denote $T_t:L^2(\mathcal{M})\rightarrow L^2(\mathcal{M})$ to be the solution operator of the following
problem 
\begin{eqnarray} 
\left \{
\begin{array}{rlll}
 \D   -\frac{1}{t}\int_{\M}R_t(\bx,\by)(u(\bx)-u(\by))\mathd\by&=&\D \int_{\mathcal{M}} \bar{R}_t(\bx, \by) f(\by) \mathd \by, 
\quad & \bx \in \mathcal{M}'_t \\
    u(\bx) & = & 0,\quad & \bx \in \mathcal{V}_t \\
\end{array}
\right.\nonumber
\end{eqnarray}

The last solution operator is $T_{t,h}:C(\mathcal{M})\rightarrow C(\mathcal{M})$ which is defined as follows. 
\begin{eqnarray}
T_{t,h}(f)(\bx)=\left\{\begin{array}{cc} \D\frac{1}{w_{t,h}(\bx)}\left(\sum_{\bfp_j\in \M}R_t(\bx,\bfp_j)u_jV_j
-t\sum_{\bfp_j\in \M}\bar{R}_t(\bx,\bfp_j)f(\bfp_j)V_j\right),& \bx\in \M'_t,\\
0,&\bx\in \V_t,
\end{array}\right.\nonumber
\end{eqnarray}
where $w_{t,h}(\bx)=\sum_{\bfp_j\in \M}R_t(\bx,\bfp_j)V_j$ and $\bfu=(u_1, \cdots, u_n)^t$ with $u_j=0, \; \bfp_j\in \V_t$ solves
the following linear system 
\begin{eqnarray}
 -\frac{1}{t}\sum_{\bfp_j\in \M}R_t(\bfp_i,\bfp_j)(u_i-u_j)V_j=\sum_{\bfp_j\in \M}\bar{R}_t(\bfp_i,\bfp_j)f(\bfp_j)V_j.\nonumber
\end{eqnarray}
We know that $T, T_t$ and $T_{t,h}$ have following properties. 
\begin{proposition}
\label{prop:eigen}
For any $t>0$, $h>0$,
  \begin{itemize}
  \item[1. ] $T, T_t$ are compact operators on $H^1(\M)$ into $H^1(\M)$; $T_t, T_{t,h}$ are compact operators on $C^1(\M)$ into $C^1(\M)$.
\item[2. ] All eigenvalues of $T, T_t, T_{t,h}$ are real numbers. All generalized eigenvectors of $T, T_t, T_{t,h}$ are eigenvectors.
  \end{itemize}
\end{proposition}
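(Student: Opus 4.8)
The plan is to establish the two properties separately, relying on standard compact-operator theory and the stability/coercivity results already proved. For property 1, the key point is that each solution operator can be factored through a smoothing step. Consider $T_t$ first. Given $f\in H^1(\M)$, Theorem \ref{thm:stability} (with $r(\bx)=\int_{\M}\bar{R}_t(\bx,\by)f(\by)\mathd\by$ on $\M'_t$, and $u=0$ on $\V_t$) shows $T_tf\in H^1(\M'_t)$ with norm controlled by $\|r\|_{L^2(\M'_t)}+t\|\nabla r\|_{L^2(\M'_t)}$, and since $r$ is an integral of $f$ against the smooth kernel $\bar{R}_t$, one has $\|r\|_{H^1}\le C(t)\|f\|_{L^2}$ — in fact $r\in C^\infty$ with all derivatives bounded, so $T_t$ maps $H^1(\M)$ boundedly into $H^2(\M'_t)\hookrightarrow\hookrightarrow H^1(\M'_t)$. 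Extending by zero on $\V_t$ keeps us in $H^1(\M)$ (because $v=0$ on $\p\M$ in the relevant sense; this is exactly the content used in Lemma \ref{lem:elliptic_L_t}). The compact Sobolev embedding on the compact manifold $\M$ then gives compactness of $T_t:H^1\to H^1$. The operator $T$ is the classical Dirichlet solution operator for $\Delta_\M$, whose compactness on $H^1$ is standard elliptic theory. For $T_t,T_{t,h}$ on $C^1(\M)$: here the output $u_{t,h}$ (resp. $u_t$) is, on $\M'_t$, an explicit finite sum (resp. integral) of the data against $C^2$ kernels divided by $w_{t,h}$ (resp. $w_t$), which is bounded below; differentiating once more shows the image lies in a bounded subset of $C^2(\M'_t)$, and $C^2(\M'_t)\hookrightarrow\hookrightarrow C^1(\M'_t)$ by Arzelà–Ascoli. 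Again extension by zero keeps $C^1$ regularity near $\p\M$ because the kernel cuts off before reaching $\p\M$ (the definition of $\M'_t$ via $B(\bx,2\sqrt t)\cap\p\M=\emptyset$ guarantees a collar where the interpolant already vanishes smoothly). So all four operators are compact on the stated spaces.

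For property 2, the strategy is to exhibit each operator as self-adjoint with respect to a suitable inner product, so that the spectral theorem for compact self-adjoint operators applies and forces real eigenvalues with no nontrivial Jordan blocks. The operator $T$ is self-adjoint on $L^2(\M)$ by the standard symmetry of the Dirichlet Laplacian (integration by parts, zero boundary data). For $T_t$: writing $g=T_tf$, the defining equation reads $L_t g=\bar{R}_t*f$ on $\M'_t$ with $g=0$ on $\V_t$, where $L_t$ is the operator in \eqref{eq:Lt}. The bilinear form $\langle L_t g,\phi\rangle_{L^2(\M'_t)}$ equals, after symmetrizing the double integral exactly as in the first display of the proof of Theorem \ref{thm:stability},
\begin{eqnarray*}
\frac{1}{2t}\int_{\M'_t}\int_{\M'_t}R_t(\bx,\by)(g(\bx)-g(\by))(\phi(\bx)-\phi(\by))\mathd\bx\mathd\by
+\frac{1}{t}\int_{\M'_t}g(\bx)\phi(\bx)\!\left(\int_{\V_t}R_t(\bx,\by)\mathd\by\right)\!\mathd\bx,
\end{eqnarray*}
which is manifestly symmetric in $g$ and $\phi$ and, by Corollary \ref{cor:l2-inner}, coercive on $L^2(\M'_t)$. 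Hence $L_t$ is a symmetric, boundedly invertible operator, so $T_t=L_t^{-1}(\bar{R}_t*\,\cdot\,)$ is self-adjoint with respect to the inner product $(f_1,f_2)\mapsto\langle \bar{R}_t*f_1,f_2\rangle$; equivalently it is self-adjoint in the $L_t$-inner product. Being compact and self-adjoint, its eigenvalues are real and its generalized eigenspaces coincide with its eigenspaces. The discrete operator $T_{t,h}$ is handled identically: the analogous symmetric bilinear form is
\begin{eqnarray*}
\frac{1}{2t}\sum_{\bfp_i,\bfp_j\in\M'_t}R_t(\bfp_i,\bfp_j)(g_i-g_j)(\phi_i-\phi_j)V_iV_j
+\frac{1}{t}\sum_{\bfp_i\in\M'_t}g_i\phi_i\!\left(\sum_{\bfp_j\in\V_t}R_t(\bfp_i,\bfp_j)V_j\right)\!V_i,
\end{eqnarray*}
so $T_{t,h}$ is self-adjoint for the weighted discrete inner product $\langle g,\phi\rangle_h=\sum_i g_i\phi_i V_i$ composed with the symmetric positive-definite stiffness matrix; in finite dimensions a matrix that is self-adjoint for some inner product is diagonalizable with real spectrum, giving both conclusions. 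Finally, the compactness of $T_{t,h}$ on $C^1(\M)$ is automatic since its range is finite-dimensional.

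The main obstacle I anticipate is the bookkeeping at the interface $\p\M'_t$ — i.e., making rigorous that extending the solution by zero across $\V_t$ does not destroy the $H^1$ (or $C^1$) regularity, and that the bilinear-form computations above are valid with the correct treatment of the $\int_{\V_t}R_t\,\mathd\by$ boundary term. This is precisely the point where one must invoke the geometric definition of $\M'_t$ (the $2\sqrt t$-collar ensures the kernel support stays inside $\M'_t$ for $\bx$ well inside $\M'_t$, while near $\p\M'_t$ the solution transitions to its zero extension) and the coercivity Corollaries \ref{cor:coercivity-inner} and \ref{cor:l2-inner}, which were stated in exactly the form needed. Everything else is routine application of the spectral theorem for compact self-adjoint operators and the compact Sobolev/Arzelà–Ascoli embeddings on the compact manifold $\M$.
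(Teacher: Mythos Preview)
Your approach to part 1 is essentially the same as the paper's: both use that the solution has an explicit representation as an integral (or sum) of smooth kernels against the data, so the output lands in $C^2$ and the compact embeddings $C^2\hookrightarrow\hookrightarrow C^1$, $H^2\hookrightarrow\hookrightarrow H^1$ do the rest; the paper writes this one-line representation formula directly rather than going through Theorem~\ref{thm:stability}, but the content is the same. Your worry about the interface $\partial\M'_t$ is legitimate, and your proposed resolution (``the kernel cuts off before reaching $\partial\M$'') is not quite right --- the potential jump is at $\partial\M'_t$, not $\partial\M$, and the solution does \emph{not} generically vanish there --- but the paper glosses over exactly the same point, so you are at the same level of rigor as the original.

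For part 2 your route is genuinely different from the paper's. The paper argues directly: it writes $\lambda$ as the quotient $\langle \bar R_t u,u\rangle/\langle L_t u,u\rangle$ and reads off reality from the symmetry of numerator and denominator; then, to kill generalized eigenvectors, it takes $v=(T_t-\lambda)^{m-1}u$, $w=(T_t-\lambda)^{m-2}u$, pairs the two relations $\lambda L_t v=\bar R_t*v$ and $L_t v=\bar R_t*w-\lambda L_t w$ against $w$ and $v$ respectively, and uses symmetry plus the coercivity $\langle L_t v,v\rangle\ge C\|v\|^2$ to force $v=0$, reducing the ascent by one. Your argument packages the same two ingredients (symmetry of $L_t$ and of $\bar R_t*$, coercivity of $L_t$) into the single statement that $T_t$ is self-adjoint for the $L_t$-inner product on the subspace $\{u:u|_{\V_t}=0\}$, and then invokes the spectral theorem for compact self-adjoint operators to get both conclusions at once. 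This is cleaner and more conceptual; the paper's version is more explicit and avoids having to check that the $L_t$-form is a genuine inner product on the correct invariant subspace (one small point you should make explicit: eigenvectors for nonzero eigenvalues automatically vanish on $\V_t$, so restricting to that subspace loses nothing).
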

\begin{proof}
  The proof of (1) is straightforward. First, it is well known that $T$ is compact operator. $T_{t,h}$ is actually finite dimensional operator, so
it is also compact. To show the compactness of $T_t$, we need the following formula,
\begin{eqnarray}
  T_t u = \frac{1}{w_t(\bx)}\int_\M R_t(\bx,\by)T_tu(\by)\mathd \by+ \frac{t}{w_t(\bx)}\int_\M \bar{R}_t(\bx,\by)u(\by)\mathd \by, \quad \forall u\in H^1(\M).
\nonumber
\end{eqnarray}
Using the assumption that $R\in C^2$, direct calculation would gives that
 that $T_tu\in C^2$. This would imply the compactness of $T_t$ both in $H^1$ and $C^1$.

For the operator $T$, the conclusion (2) is well known. The proof of $T_t$ and $T_{t,h}$ are very similar, so here we only present the
proof for $T_t$.

Let $\lambda$ be an eigenvalue of $T_t$ and $u$ is corresponding eigenfunction, then
\begin{eqnarray*}
  L_t T_t u =\lambda L_t u
\end{eqnarray*}
which implies that
\begin{eqnarray*}
  \lambda = \frac{\int_\M \int _\M \bar{R}_t(\bx, \by) u^*(\bx)u(\by) \mathd \bx\mathd \by}{\int_\M u^*(\bx)(L_t u)(\bx)\mathd \bx }
\end{eqnarray*}
where $u^*$ is the complex conjugate of $u$.

Using the symmetry of $L_t$ and $\bar{R}(\bx,\by)$, it is easy to show that $\lambda\in \mathbb{R}$.

Let $u$ be a generalized eigenfunction of $T_t$ with multiplicity $m>1$ associate with eigenvalue $\lambda$. Let $v=(T_t-\lambda)^{m-1}u$,
$w=(T_t-\lambda)^{m-2}u$, then $v$
is an eigenfunction of $T_t$ and
\begin{eqnarray*}
  T_t v =\lambda v,\quad (T_t-\lambda)w=v
\end{eqnarray*}
and $v(\bx)=0,\; w(\bx)=0,\; \bx\in \V_t$.

By applying $L_t$ on both sides of above two equations, we have
\begin{eqnarray*}
  \lambda L_t v &=& L_t (T_tv) = \int_\M \bar{R}_t(\bx,\by)v(\by)\mathd \by=\int_{\M'_t} \bar{R}_t(\bx,\by)v(\by)\mathd \by,\quad \bx\in \M'_t\\
L_t v &=& L_t(T_tw)-\lambda L_t w = \int_{\M'_t} \bar{R}_t(\bx,\by)w(\by)\mathd \by-\lambda L_t w,\quad \bx\in \M'_t
\end{eqnarray*}
Using above two equations and the fact that $L_t$ is symmetric, we get
\begin{eqnarray*}
  0&=& \left<w,\lambda L_t v - \int_{\M'_t} \bar{R}_t(\bx,\by)v(\by)\mathd \by\right>_{\M'_t}\nonumber\\
&=& \left<\lambda L_t w - \int_{\M'_t} \bar{R}_t(\bx,\by)w(\by)\mathd \by, v\right>_{\M'_t}\nonumber\\
&=& \left<L_t v, v\right>_{\M'_t}\ge C\left\|v\right\|_2^2
\end{eqnarray*}
which implies that $(T_t-\lambda)^{m-1}u=v=0$. This proves that $u$ is a generalized eigenfunction of $T_t$ with multiplicity $m-1$.
Repeating above argument, we can show that $u$ is actually an eigenfunction of $T_t$.
\end{proof}

Theorem \ref{thm:converge-integral} actually gives that $T_t$ converges to $T$ in $H^1$ norm.
\begin{theorem}
\label{thm:converge_h1} Under the assumptions in Section \ref{sec:PIM}, for $t$ small enough, 
there exists a constant $C>0$ such that
  \begin{eqnarray}
    \|T-T_t\|_{H^1}\le Ct^{1/4}.\nonumber
  \end{eqnarray}
\end{theorem}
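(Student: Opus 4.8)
The plan is to derive the operator-norm bound from the convergence estimate of Theorem \ref{thm:converge-integral} for a fixed right-hand side, after splitting $\M$ into the interior region $\M'_t$ and the boundary collar $\V_t$. Fix $f\in H^1(\M)$. Let $u=Tf$ solve the Dirichlet problem and $u_t=T_tf$ solve the corresponding integral equation; comparing the definitions of $T$ and $T_t$ with \eqref{eq:dirichlet} and \eqref{eq:integral}, the pair $(u,u_t)$ is exactly a matched pair in Theorem \ref{thm:converge-integral} with data $-f$ (and $\|-f\|_{H^1(\M)}=\|f\|_{H^1(\M)}$), so that result applies verbatim. Since $u_t\equiv 0$ on $\V_t$ whereas $u=Tf$ does not vanish there, I would write
\[
\|(T-T_t)f\|_{H^1(\M)}^2=\|u-u_t\|_{H^1(\M'_t)}^2+\|u\|_{H^1(\V_t)}^2
\]
(splitting the $L^2$ and gradient integrals over the partition $\M=\M'_t\cup\V_t$) and bound the two pieces separately.

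The interior piece is handled immediately by Theorem \ref{thm:converge-integral}: $\|u-u_t\|_{H^1(\M'_t)}\le Ct^{1/4}\|f\|_{H^1(\M)}$. For the collar piece I would use that $\V_t$ is a tubular neighbourhood of $\p\M$ of width $2\sqrt t$, so $|\V_t|=O(\sqrt t)$; that elliptic regularity gives $\|u\|_{H^2(\M)}\le C\|f\|_{L^2(\M)}\le C\|f\|_{H^1(\M)}$; and that $u=Tf$ vanishes on $\p\M$. Working in boundary-normal (Fermi) coordinates on $\V_t\cong\p\M\times[0,2\sqrt t)$ and integrating along the normal segments — using $u|_{\p\M}=0$ for the function itself and the trace inequality $\|\nabla u\|_{L^2(\p\M)}\le C\|u\|_{H^2(\M)}$ for its gradient — one obtains
\[
\|u\|_{L^2(\V_t)}^2\le C\sqrt t\,\|u\|_{H^1(\M)}^2,\qquad \|\nabla u\|_{L^2(\V_t)}^2\le C\sqrt t\,\|u\|_{H^2(\M)}^2\le C\sqrt t\,\|f\|_{H^1(\M)}^2 ,
\]
and hence $\|u\|_{H^1(\V_t)}\le Ct^{1/4}\|f\|_{H^1(\M)}$. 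Adding the two bounds and taking the supremum over $\|f\|_{H^1(\M)}=1$ gives $\|T-T_t\|_{H^1}\le Ct^{1/4}$.

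The only step beyond a direct appeal to Theorem \ref{thm:converge-integral} is the estimate on the collar $\V_t$, and there the gradient term $\|\nabla u\|_{L^2(\V_t)}$ is the main obstacle: since $\nabla u$ need not vanish on $\p\M$ there is no Poincar\'e gain, so one must instead trade the small measure $|\V_t|=O(\sqrt t)$ against the $H^2$-regularity of $u$ (equivalently, dominate $\sup|\nabla u|$ along each normal ray by its boundary trace plus a normal-derivative integral). The auxiliary ingredients — the elliptic estimate, the change to Fermi coordinates on the thin collar, and the trace inequality — are standard and I would not carry them out in detail.
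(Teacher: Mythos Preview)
Your proposal is correct. The paper's own justification is a single sentence --- ``Theorem~\ref{thm:converge-integral} actually gives that $T_t$ converges to $T$ in $H^1$ norm'' --- with no further argument. You are more careful in noticing that Theorem~\ref{thm:converge-integral} only controls $\|u-u_t\|_{H^1(\M'_t)}$ and in explicitly handling the collar $\V_t$; the paper glosses over this point. Your collar argument is the same Fermi-coordinate computation that appears in the paper's Lemma~\ref{lem:u-boundary} (which in fact yields the sharper bound $\|u\|_{L^2(\V_t)}^2\le Ct^{3/2}\|f\|_{H^1(\M)}^2$), extended to $\nabla u$ via the trace inequality and $H^2$-regularity; that gradient estimate is not stated separately in the paper but is implicit in the same machinery.
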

Using the arguments in \cite{SS-rate} and Theorem \ref{thm:converge-dis}, we can get that 
$T_{t,h}$ converges to $T_t$ in $C^1$ norm.
\begin{theorem}
\label{thm:converge_c1}
Under the assumptions in Section \ref{sec:PIM}, for $t,h$ small enough, 
there exists a constant $C>0$ such that
  \begin{eqnarray}
    \|T_{t,h}-T_t\|_{C^1}\le \frac{Ch}{t^{d/4+2}}.\nonumber
  \end{eqnarray}
\end{theorem}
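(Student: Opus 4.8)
The plan is to prove $\|T_{t,h} - T_t\|_{C^1} \le Ch/t^{d/4+2}$ by combining the stability result (Theorem~\ref{thm:stability}) with the discretization consistency result (Theorem~\ref{thm:dis_error}), and then promoting the resulting $H^1$ bound to a $C^1$ bound using a pointwise interpolation estimate as in \cite{SS-rate}. First I would fix $f\in C^1(\M)$ with $\|f\|_{C^1(\M)}=1$, let $u_t = T_t f$ and $u_{t,h} = T_{t,h}f$, and set $e = u_{t,h} - u_t$. The function $w := L_t e$ satisfies an integral equation of the form \eqref{eq:integral-equation} with right-hand side $r = L_t e$, so Theorem~\ref{thm:stability} gives
\begin{eqnarray*}
\|e\|_{H^1(\M'_t)} \le C\|L_t e\|_{L^2(\M'_t)} + Ct\|\nabla L_t e\|_{L^2(\M'_t)}.
\end{eqnarray*}
Wait --- more precisely, $e$ itself solves $L_t e = L_t u_{t,h} - L_t u_t$, and since $L_t u_{t,h} = (L_t - L_{t,h})u_{t,h} + L_{t,h}u_{t,h}$ while both $L_{t,h}u_{t,h}$ and $L_t u_t$ equal the same sampled/continuous right-hand sides up to the consistency error, Theorem~\ref{thm:dis_error} bounds $\|L_t e\|_{L^2(\M'_t)} \le Ch/t^{3/2}$ and $\|\nabla L_t e\|_{L^2(\M'_t)}\le Ch/t^2$. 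Plugging these into the stability estimate yields $\|e\|_{H^1(\M'_t)} = \|u_{t,h}-u_t\|_{H^1(\M'_t)} \le Ch/t^{3/2}$, which is exactly Theorem~\ref{thm:converge-dis}.

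Next I would upgrade this $H^1$ bound to a $C^1$ bound. The key observation is that $e(\bx) = u_{t,h}(\bx) - u_t(\bx)$ has, for $\bx\in\M'_t$, the smoothed representation
\begin{eqnarray*}
e(\bx) = \frac{1}{w_t(\bx)}\int_{\M}R_t(\bx,\by)e(\by)\mathd\by + \frac{t}{w_t(\bx)}\big(L_t e\big)(\bx) + (\text{discretization remainder}),
\end{eqnarray*}
so that $e$ and $\nabla e$ at a point are controlled by a convolution of $e$ against the bounded, compactly supported kernels $R_t$ and $\nabla R_t$. Since $R_t(\bx,\by)$ and $|\nabla_\bx R_t(\bx,\by)|$ are bounded by $C t^{-k/2}$ and $Ct^{-k/2-1}$ respectively and supported in $\|\bx-\by\|\le 2\sqrt t$, Cauchy--Schwarz gives a pointwise bound of the form $\|e\|_{C^1(\M'_t)} \le C t^{-k/2-1}\big(\|e\|_{L^2(\M'_t)} + \text{remainder terms}\big)$, and using $k\le d$ together with the already-established $\|e\|_{H^1(\M'_t)}\le Ch/t^{3/2}$ (plus the $L^2$ and $\nabla$ consistency bounds for the remainder) produces $\|T_{t,h}-T_t\|_{C^1}\le Ch/t^{d/4+2}$ after tracking the powers of $t$ --- here the $t^{d/4}$ rather than $t^{d/2}$ in the denominator comes from splitting the kernel $L^\infty$ bound as $t^{-k/2} = t^{-k/4}\cdot t^{-k/4}$ and using the $L^2$ smallness to absorb one factor. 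This pointwise-from-$L^2$ argument is precisely what is carried out in \cite{SS-rate}, so I would invoke those estimates rather than redo them.

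The main obstacle is bookkeeping the negative powers of $t$ correctly through the chain: one must verify that the consistency errors in Theorem~\ref{thm:dis_error} are stated for $L_t(u_{t,h}-u_t)$ (not $L_{t,h}$), that the stability constant in Theorem~\ref{thm:stability} is genuinely $t$-independent, and that the kernel-smoothing step loses exactly $t^{-d/4-1/2}$ relative to $H^1$ to land at exponent $d/4+2$. A secondary technical point is that $T_{t,h}$ acts on $C^1(\M)$ but is defined through a discrete sum, so the interpolation formula \eqref{eq:interpolation} must be used to make sense of $L_t u_{t,h}$ as a genuine function on $\M'_t$; this is routine given the definitions \eqref{eq:Lt}--\eqref{eq:dis-operator-inner} and equation \eqref{eq:dis_interp}. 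Once these are in place the proof is a direct concatenation: consistency (Theorem~\ref{thm:dis_error}) $\Rightarrow$ $H^1$ convergence via stability (Theorem~\ref{thm:stability}, giving Theorem~\ref{thm:converge-dis}) $\Rightarrow$ $C^1$ convergence via kernel smoothing, with all constants depending only on $\M$ and $\p\M$.
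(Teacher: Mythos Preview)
Your proposal is correct and follows essentially the same route as the paper: the paper does not give a self-contained proof of Theorem~\ref{thm:converge_c1} but simply states that it follows from Theorem~\ref{thm:converge-dis} together with the arguments in \cite{SS-rate}, which is precisely your chain (consistency via Theorem~\ref{thm:dis_error} $\Rightarrow$ $H^1$ bound via Theorem~\ref{thm:stability} $\Rightarrow$ $C^1$ upgrade by kernel smoothing from \cite{SS-rate}). Your heuristic for the $t^{-d/4}$ loss --- Cauchy--Schwarz against the kernel, whose $L^2$ norm scales like $t^{-k/4}$, plus an extra $t^{-1/2}$ from $\nabla R_t$ --- is the correct bookkeeping and matches the exponent in the statement.
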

And we also have the bound of $T_t$ and $T_{t,h}$ following the arguments in \cite{SS-rate}.
\begin{theorem}
\label{thm:bound} Under the assumptions in Section \ref{sec:PIM}, for $t,h$ small enough, 
there exists a constant $C$ independent on $t$ and $h$, such that
  \begin{eqnarray}
    \|T_t\|_{H^1}\le C,\quad \|T_{t,h}\|_{\infty}\le Ct^{-d/4}, \quad
\|T_{t,h}\|_{C^1}\le Ct^{-(d+2)/4}.\nonumber
  \end{eqnarray}
\end{theorem}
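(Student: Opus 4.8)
The three estimates are proved separately, but all by the same recipe: write down the equation satisfied by the relevant solution, use the appropriate coercivity/stability result to bound a weak norm, and then promote it to a pointwise norm via the averaging representation of the solution.

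\emph{Bound on $\|T_t\|_{H^1}$.} Set $u=T_tf$. Then $u$ solves \eqref{eq:integral-equation} with $r(\bx)=-\int_\M\bar R_t(\bx,\by)f(\by)\mathd\by$. Since $R$, and hence $\bar R$, vanishes outside the unit ball, $\bar R_t(\bx,\cdot)$ is supported in $B(\bx,2\sqrt t)$ with $\int_\M\bar R_t(\bx,\by)\mathd\by\le C$, so $\|r\|_{L^2(\M'_t)}\le C\|f\|_{L^2(\M)}$; and because $|\nabla_\bx\bar R_t(\bx,\by)|\le Ct^{-1/2}R_t(\bx,\by)$ on that support, $\|\nabla r\|_{L^2(\M'_t)}\le Ct^{-1/2}\|f\|_{L^2(\M)}$. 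Theorem~\ref{thm:stability} then gives $\|u\|_{H^1(\M'_t)}\le C(1+t^{1/2})\|f\|_{L^2(\M)}\le C\|f\|_{H^1(\M)}$, and since $T_tf\equiv 0$ on $\V_t$ this is the claimed bound on all of $\M$. The same argument applied to the continuous representation $T_tf=w_t^{-1}\int_\M R_tT_tf+tw_t^{-1}\int_\M\bar R_tf$ — a Cauchy--Schwarz with $\|R_t(\bx,\cdot)\|_{L^2(\M)}\le Ct^{-d/4}$ — also yields $\|T_t\|_{C^1}\le Ct^{-(d+2)/4}$, which I mirror in the discrete setting next.

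\emph{Bounds on $T_{t,h}$.} Let $\bfu=(u_1,\dots,u_n)^t$ solve the linear system defining $T_{t,h}f$ and let $u_{t,h}$ be its interpolant \eqref{eq:interpolation}. The crux is a discrete $\ell^2$-stability estimate $\|\bfu\|_{\ell^2}:=(\sum_iu_i^2V_i)^{1/2}\le C\|f\|_{\ell^2}$. Test the linear system against $u_iV_i$ and sum over $\bfp_i\in\M'_t$; using $u_j=0$ on $\V_t$ and symmetrizing exactly as in the proof of Theorem~\ref{thm:stability}, the left side becomes $\tfrac{1}{2t}\sum_{\bfp_i,\bfp_j\in\M'_t}R_t(\bfp_i,\bfp_j)(u_i-u_j)^2V_iV_j+\tfrac{1}{t}\sum_{\bfp_i\in\M'_t}u_i^2(\sum_{\bfp_j\in\V_t}R_t(\bfp_i,\bfp_j)V_j)V_i$, while the right side is $\le C\|f\|_{\ell^2}\|\bfu\|_{\ell^2}$; the left side is in turn $\ge c\|\bfu\|_{\ell^2}^2$ by a discrete version of Corollary~\ref{cor:l2-inner}, obtained by transferring these double sums to the double integrals of that corollary (and $\|\bfu\|_{\ell^2}$ to the $L^2$ norm of a smooth extension of $\bfu$) using the $h$-integrable approximation property, the comparison errors being lower order once $h/\sqrt t$ is small. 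With $\|\bfu\|_{\ell^2}\le C\|f\|_{\ell^2}\le C\|f\|_\infty$ in hand, the sup bound follows from the representation $u_{t,h}(\bx)=w_{t,h}(\bx)^{-1}\big(\sum_jR_t(\bx,\bfp_j)u_jV_j+t\sum_j\bar R_t(\bx,\bfp_j)f(\bfp_j)V_j\big)$: condition~(c) on $R$ together with the integrable approximation gives $w_{t,h}(\bx)\ge c>0$ for $\bx\in\M'_t$; a discrete Cauchy--Schwarz with $\|R_t(\bx,\cdot)\|_{\ell^2}\le Ct^{-d/4}$ (another integrable comparison, of $\sum_jR_t^2V_j$ with $\int_\M R_t^2$) bounds the first term by $Ct^{-d/4}\|\bfu\|_{\ell^2}$, and $\sum_j\bar R_t(\bx,\bfp_j)V_j\le C$ bounds the second by $Ct\|f\|_\infty$, so $\|T_{t,h}f\|_\infty\le Ct^{-d/4}\|f\|_\infty$. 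For the $C^1$ bound one differentiates this representation in $\bx$: the four resulting terms are estimated as above, except that each factor $R_t$, $\bar R_t$ or $w_{t,h}$ that is differentiated costs one extra power $t^{-1/2}$, since $|\nabla_\bx R_t(\bx,\by)|,|\nabla_\bx\bar R_t(\bx,\by)|\le Ct^{-1/2}(4\pi t)^{-d/2}\mathbf{1}_{B(\bx,2\sqrt t)}(\by)$ (hence $\|\nabla_\bx R_t(\bx,\cdot)\|_{\ell^2}\le Ct^{-(d+2)/4}$) and $|\nabla w_{t,h}(\bx)|\le Ct^{-1/2}$ by an integrable comparison with $|\nabla w_t|$. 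Collecting, $\|\nabla u_{t,h}\|_\infty\le Ct^{-(d+2)/4}\|f\|_\infty+Ct^{1/2}\|f\|_\infty\le Ct^{-(d+2)/4}\|f\|_\infty$, and with the sup bound this gives $\|T_{t,h}\|_{C^1}\le Ct^{-(d+2)/4}$.

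\emph{The main obstacle.} The only genuinely delicate point is the discrete $\ell^2$-stability estimate. It cannot be reached by a contraction argument, because the averaging map $\bfu\mapsto(w_{t,h}(\cdot)^{-1}\sum_{\bfp_j\in\M'_t}R_t(\cdot,\bfp_j)u_jV_j)$ on $\M'_t$ has operator norm exactly $1$: for sample points more than $4\sqrt t$ from $\p\M$ the kernel never reaches the volume layer $\V_t$, so there is no spectral gap there. One is therefore forced to reproduce the coercivity inequality of Corollary~\ref{cor:l2-inner} — ultimately Lemma~\ref{lem:coercivity} together with a Poincar\'e inequality — at the discrete level, and the work lies in showing that the several sum-versus-integral discrepancies (all involving $C^1$ norms of products of the kernels with a smooth extension of $\bfu$) are uniformly lower order; this is exactly where the smallness of $h/\sqrt t$ enters and where the estimates of \cite{SS-rate} are used. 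Note finally that the shortcut $\|T_{t,h}\|_{C^1}\le\|T_t\|_{C^1}+\|T_{t,h}-T_t\|_{C^1}$ via Theorem~\ref{thm:converge_c1} does \emph{not} close, since $\|T_{t,h}-T_t\|_{C^1}\le Cht^{-d/4-2}$ is not $\le Ct^{-(d+2)/4}$ unless $h\lesssim t^{3/2}$, which is stronger than the standing hypothesis.
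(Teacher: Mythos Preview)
Your proposal is correct and follows the same route the paper implicitly takes: the paper defers the proof to \cite{SS-rate}, but the key ingredients it does supply --- Theorem~\ref{thm:stability} for the $H^1$ bound on $T_t$, and Lemma~\ref{lem:elliptic_dis} with Corollary~\ref{thm:bound_solution_bfu} in Appendix~B for the discrete $\ell^2$-stability $(\sum_i u_i^2 V_i)^{1/2}\le C\|f\|_\infty$ --- are exactly what you invoke, after which the promotion to sup and $C^1$ bounds via the averaging representation and Cauchy--Schwarz with $\|R_t(\bx,\cdot)\|_{\ell^2}\le Ct^{-k/4}$ is routine. Your identification of the discrete coercivity as the genuine obstacle (no spectral gap in the averaging map away from the volume layer, and the $C^1$-perturbation shortcut via Theorem~\ref{thm:converge_c1} failing without $h\lesssim t^{3/2}$) is accurate and is precisely why the paper needs Lemma~\ref{lem:elliptic_dis}, whose proof indeed goes through a smoothed extension at scale $t'=t/18$ rather than a naive sum-to-integral comparison.
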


. 
Before state the main theorem of the spectral convergence, we need to introduce some notations. 
Let $X$ be a complex Banach space and $L:X\rightarrow X$ be a compact linear operator. $\rho(L)$ is the resolvent set of $L$ which  is given by
$z\in \mathbb{C}$
such that $z-L$ is bijective. The spectrum of $L$ is $\sigma(L)=\mathbb{C}\backslash\rho(L)$. 
If $\lambda$ is a nonzero eigenvalue of $L$, the ascent multiplicity $\al$ of $\lambda-L$ is the smallest integer such that $\ker(\lambda-L)^\al=\ker(\lambda-L)^{\al+1}$.

Given a closed smooth curve $\Gamma\subset \rho(L)$ which encloses the eigenvalue $\lambda$ and no other elements of $\sigma(L)$, the Riesz spectral projection associated with
$\lambda$ is defined by
\begin{eqnarray}
  E(\lambda, L)=\frac{1}{2\pi i}\int_{\Gamma} (z-L)^{-1}\mathd z,\nonumber
\end{eqnarray}
where $i=\sqrt{-1}$ is the unit imaginary


Now we are ready to state the main theorem about the convergence of the eigenvalue problem. And its proof
can be given from Theorems \ref{thm:converge_h1}, \ref{thm:converge_c1} and \ref{thm:bound} following same 
arguments as those in \cite{SS-rate}.
\begin{theorem}
Under the assumptions in Section \ref{sec:PIM},
let $\lambda_i$ be the $i$th smallest eigenvalue of $T$ counting multiplicity, and $\lambda_{i}^{t,h}$ be the $i$th smallest eigenvalue
of $T_{t,h}$ counting multiplicity, then there exists a constant $C$ such that
$$|\lambda_i^{t,h} - \lambda_i|\le C  \left(t^{1/4}+\frac{h}{t^{d/4+3}}\right) , $$
and there exist another constant $C$ such that, for any $\phi\in E(\lambda_i,T)X$ and $X=H^1(\M)$,
$$\|\phi-E(\sigma_{i}^{t,h},T_{t,h})\phi\|_{H^1(\M)} \le C \left(t^{1/4}+\frac{h}{t^{d/4+2}}\right) .$$
where $\sigma_i^{t,h}=\{\lambda_j^{t,h}\in \sigma(T_{t,h}): j\in I_i\}$ and $I_i=\{j\in \mathbb{N}: \lambda_j=\lambda_i\}$.
\label{thm:eigen}
\end{theorem}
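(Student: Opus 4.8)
The plan is to deduce the statement from the classical perturbation theory for the eigenvalues and spectral projections of compact operators (the Osborn framework used in \cite{SS-rate}), by routing the comparison of $T$ with $T_{t,h}$ through the intermediate operator $T_t$:
$$|\lambda_i^{t,h}-\lambda_i|\le|\lambda_i^{t,h}-\lambda_i^{t}|+|\lambda_i^{t}-\lambda_i|,$$
where $\lambda_i^{t}$ is the corresponding eigenvalue of $T_t$, and similarly for the Riesz projections, with $\sigma_i^{t}$ denoting the cluster of eigenvalues of $T_t$ approximating $\lambda_i$. The hypothesis supplied by Proposition \ref{prop:eigen} is used crucially: since every generalized eigenvector of $T$, $T_t$, $T_{t,h}$ is an eigenvector, each nonzero eigenvalue has ascent $1$, so the Osborn estimates for the eigenvalues are first order in the pertinent operator-norm distance rather than a fractional power of it.

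First I would carry out the step from $T_t$ to $T$ in $H^1(\M)$. Fix a smooth closed curve $\Gamma_i\subset\rho(T)$ enclosing $\lambda_i$ and no other point of $\sigma(T)$; on $\Gamma_i$ the resolvent $(z-T)^{-1}$ is bounded by a constant $M_i$ depending only on $\M$ and $i$. By Theorem \ref{thm:converge_h1}, $\|T-T_t\|_{H^1}\le Ct^{1/4}$, and by Theorem \ref{thm:bound}, $\|T_t\|_{H^1}\le C$; hence, using $(z-T_t)^{-1}=(z-T)^{-1}\bigl(I-(T-T_t)(z-T)^{-1}\bigr)^{-1}$, for $t$ small enough $\Gamma_i\subset\rho(T_t)$ with $\|(z-T_t)^{-1}\|_{H^1}\le 2M_i$ on $\Gamma_i$. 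Inserting $(z-T_t)^{-1}-(z-T)^{-1}=(z-T_t)^{-1}(T-T_t)(z-T)^{-1}$ into the contour representations of the Riesz projections gives $\|E(\lambda_i,T)-E(\sigma_i^{t},T_t)\|_{H^1}\le Ct^{1/4}$; in particular, $\|E(\lambda_i,T)-E(\sigma_i^{t},T_t)\|<1$ forces the rank of $E(\sigma_i^{t},T_t)$ to equal the multiplicity of $\lambda_i$, and the ascent-$1$ Osborn formula yields $|\lambda_i^{t}-\lambda_i|\le Ct^{1/4}$.

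Next I would carry out the step from $T_{t,h}$ to $T_t$, which must be done in $C^1(\M)$ because $T_{t,h}$, using point values, is not defined on $H^1(\M)$. The bridge is that, by the smoothing identity for $T_t$ in the proof of Proposition \ref{prop:eigen}, the eigenfunctions of $T_t$ lie in $C^2(\M)$, so the nonzero spectrum of $T_t$ and its Riesz projections are the same whether $T_t$ acts on $H^1(\M)$ or on $C^1(\M)$; thus $\lambda_i^{t}$ and $E(\sigma_i^{t},T_t)$ are unambiguous and the same curve $\Gamma_i$ serves, now with a $C^1$ resolvent bound on $\Gamma_i$ uniform in $t$ (again via the smoothing identity, which controls the $C^1$ action of $(z-T_t)^{-1}$ by its $H^1$ action). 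Repeating the resolvent computation in $C^1(\M)$ with $\|T_t-T_{t,h}\|_{C^1}\le Ch/t^{d/4+2}$ (Theorem \ref{thm:converge_c1}) and $\|T_{t,h}\|_{C^1}\le Ct^{-(d+2)/4}$ (Theorem \ref{thm:bound}), valid as long as $h/t^{d/4+2}$ is small, gives $\|E(\sigma_i^{t},T_t)-E(\sigma_i^{t,h},T_{t,h})\|_{C^1}\le Ch/t^{d/4+2}$. Applied to $\phi\in E(\lambda_i,T)X$, which is smooth, and using that $C^1$ and $H^1$ norms are equivalent on the finite-dimensional spaces of smooth eigenfunctions involved, this combines with the previous paragraph to give the claimed $H^1$ projection bound $C(t^{1/4}+h/t^{d/4+2})$. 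For the eigenvalue difference $|\lambda_i^{t,h}-\lambda_i^{t}|$ the ascent-$1$ Osborn formula applies again, but one extra factor $t^{-1}$ enters the constants (traceable to the weight $w_{t,h}$ and to the $1/t$ prefactor in the equation defining $T_{t,h}$), producing $|\lambda_i^{t,h}-\lambda_i^{t}|\le Ch/t^{d/4+3}$; summing the two steps yields the eigenvalue estimate.

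The main obstacle I anticipate is the bookkeeping forced by the function-space mismatch together with the blow-up of the discrete operators. The distance $T-T_t$ is only small in the $H^1$ operator norm, $T_t-T_{t,h}$ is only small in the $C^1$ operator norm, and $T_{t,h}$ is not bounded on $H^1$ at all, so the comparison genuinely has to pass through $T_t$ and must exploit that the eigenfunctions of $T$ and $T_t$ are smooth so that the two norms are interchangeable on them. On top of this, $\|T_{t,h}\|_{C^1}$ grows like $t^{-(d+2)/4}$, so one works only in the regime $h\ll t^{d/4+2}$, and every constant entering the resolvent estimates and the Osborn formulas has to be tracked to recover the precise negative powers of $t$; pinning down why the eigenvalue rate carries one more power of $t^{-1}$ than the projection rate is the delicate point, and it is there that the detailed structure of $T_{t,h}$, not merely the abstract bounds of Theorems \ref{thm:converge_c1} and \ref{thm:bound}, is needed.
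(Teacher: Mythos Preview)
Your proposal is correct and follows essentially the same route as the paper: the paper does not spell out a proof but simply states that the result ``can be given from Theorems \ref{thm:converge_h1}, \ref{thm:converge_c1} and \ref{thm:bound} following same arguments as those in \cite{SS-rate},'' which is precisely the Osborn-type perturbation argument, routed through $T_t$ and split between $H^1$ and $C^1$, that you outline. Your discussion of the ascent-$1$ property from Proposition \ref{prop:eigen} and of the function-space bridge via the smoothing identity for $T_t$ makes explicit exactly the ingredients the paper leaves implicit.
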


The convergence result, Thorem \ref{thm:eigen_converge},
 follows easily from the above theorem and Proposition~\ref{prop:eigen}.

\section{Numerical results}
\label{sec:num}
In this section, we present several numerical results to show the convergence of the Point Integral method 
with volume constraint, PIM\_VC for short, from
point clouds. 

The numerical experiments were carried out in unit disk. 
We discretize unit disk with 684, 2610, 10191 and 40269 points respectively and check the convergence of the 
point integral method with volume constraint.
In the experiments, the volume weight vector $\bf V$ is estimated using the method 
proposed in~\cite{LuoSW09}. First, we locally approximate the tangent space at each point and then 
project the nearby points onto the tangent space over which a Delaunay triangulation is computed in 
the tangent space. The volume weight is estimated as the volume of the Voronoi cell of that point.




Table \ref{tbl:unit_disk_cos} gives the $l^2$ error of different methods with 684, 2610, 10191 and 40269 points.
The exact solution is $\cos 2\pi\sqrt{x^2+y^2}$. 
PIM\_Robin is the Point Integral method and using Robin boundary to 
approximate the Dirichlet boundary condition, i.e. solving the integral equation \eqref{eq:integral-pim-robin}
and here $\beta$ is chosen to be $10^{-4}$.
PIM\_VC is the Point Integral method and using volume constraint to enforce the Dirichlet boundary condition.
These two methods both converge. The rates of convergence are very close and the error of PIM\_VC is a little 
larger than the error of PIM\_Robin.
\begin{table}[!ht]
\begin{center}
\begin{tabular}{| c|| c | c | c | c |}
\hline
$|P|$ & 684    &  2610 & 10191 & 40269 \\
\hline
PIM\_Robin      &  0.1500 &   0.0428  &  0.0140  &  0.0052\\
\hline
PIM\_VC      &  0.3046 &  0.0747  &  0.0201   &  0.0067\\
\hline
\end{tabular}
\end{center}
\caption{$l^2$ error with different number of points. FEM: Finite Element method; 
PIM\_Robin: Point Integral method with Robin boundary; 
PIM\_VC: Point Integral method with volume constraint. 
The exact solution is $\cos 2\pi \sqrt{x^2+y^2}$.
\label{tbl:unit_disk_cos}}
\end{table}

Fig. \ref{fig:unit_disk_eigen} shows the result of the eigenvalues of Laplace-Beltrami operator with 
Dirichlet boundary in unit disk. 
Clearly, the eigenvalues also converge and the larger eigenvalues have larger errors which verify the 
theoretical result, Theorem \ref{thm:eigen_converge}. 
\begin{figure}[h]
\begin{center}
\begin{tabular}{c}
\includegraphics[width=0.7\textwidth]{./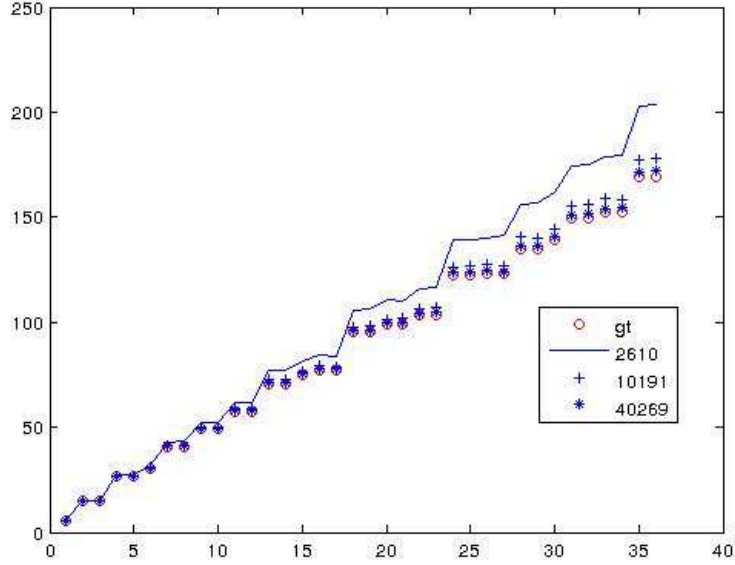} 
\end{tabular}
\end{center}
\vspace{-4mm}
\caption{Eigenvalue given by Point Integral method with volume constraint in unit disk.}
\label{fig:unit_disk_eigen}
\end{figure}

Above numerical results in unit disk are just toy examples to demonstrate the convergence of the Point Integral method with
volume constraint. However, our method applies in any point clouds which sample smooth manifolds. 
Fig. \ref{fig:various_eigen} shows the first two eigenfunctions on two complicated surfaces 
(left hand and head of Max Plank). 
\begin{figure}[h]
\begin{center}
\begin{tabular}{cccc}
\includegraphics[width=0.24\textwidth]{./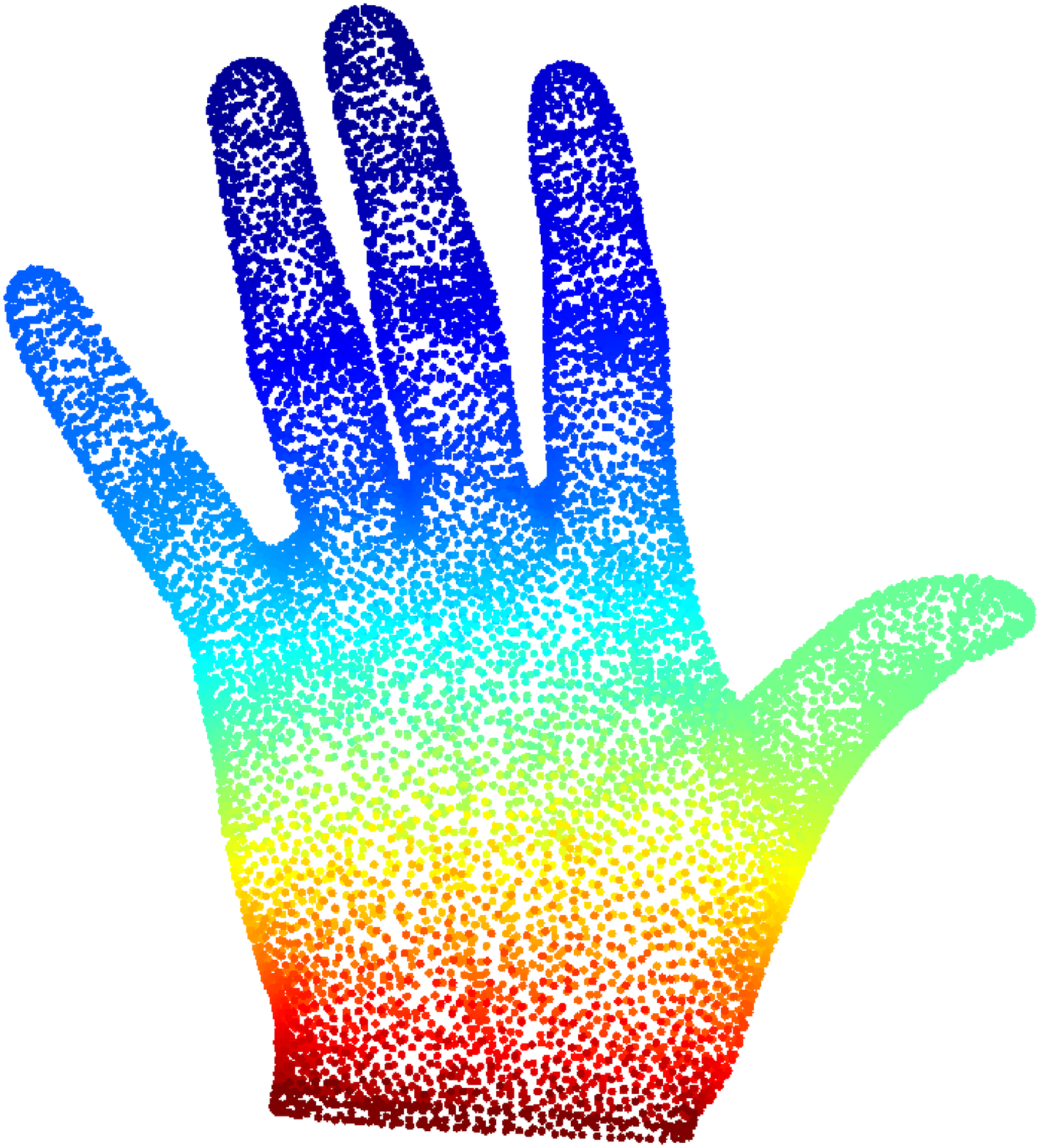} &
\includegraphics[width=0.25\textwidth]{./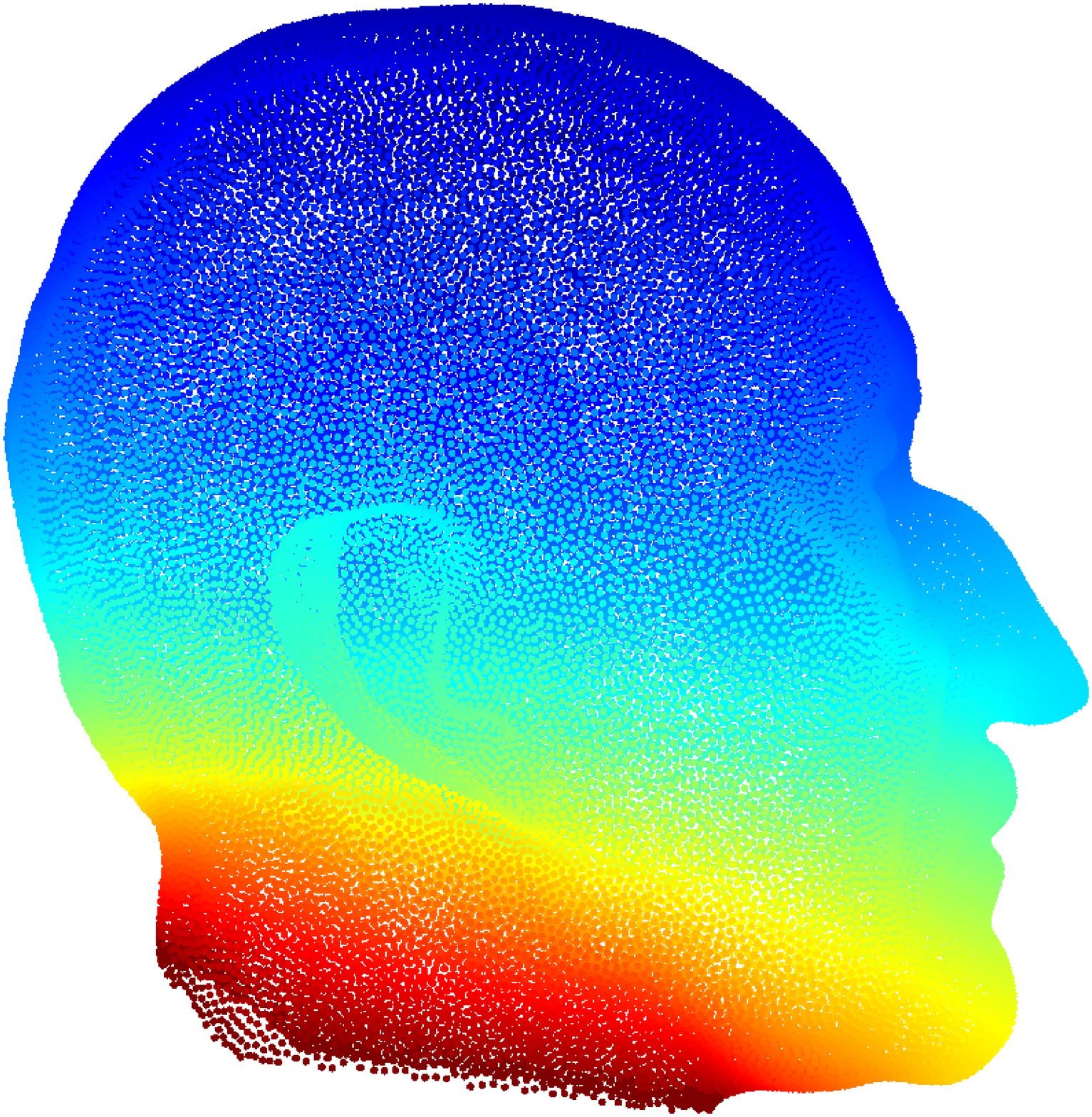} \\
\includegraphics[width=0.24\textwidth]{./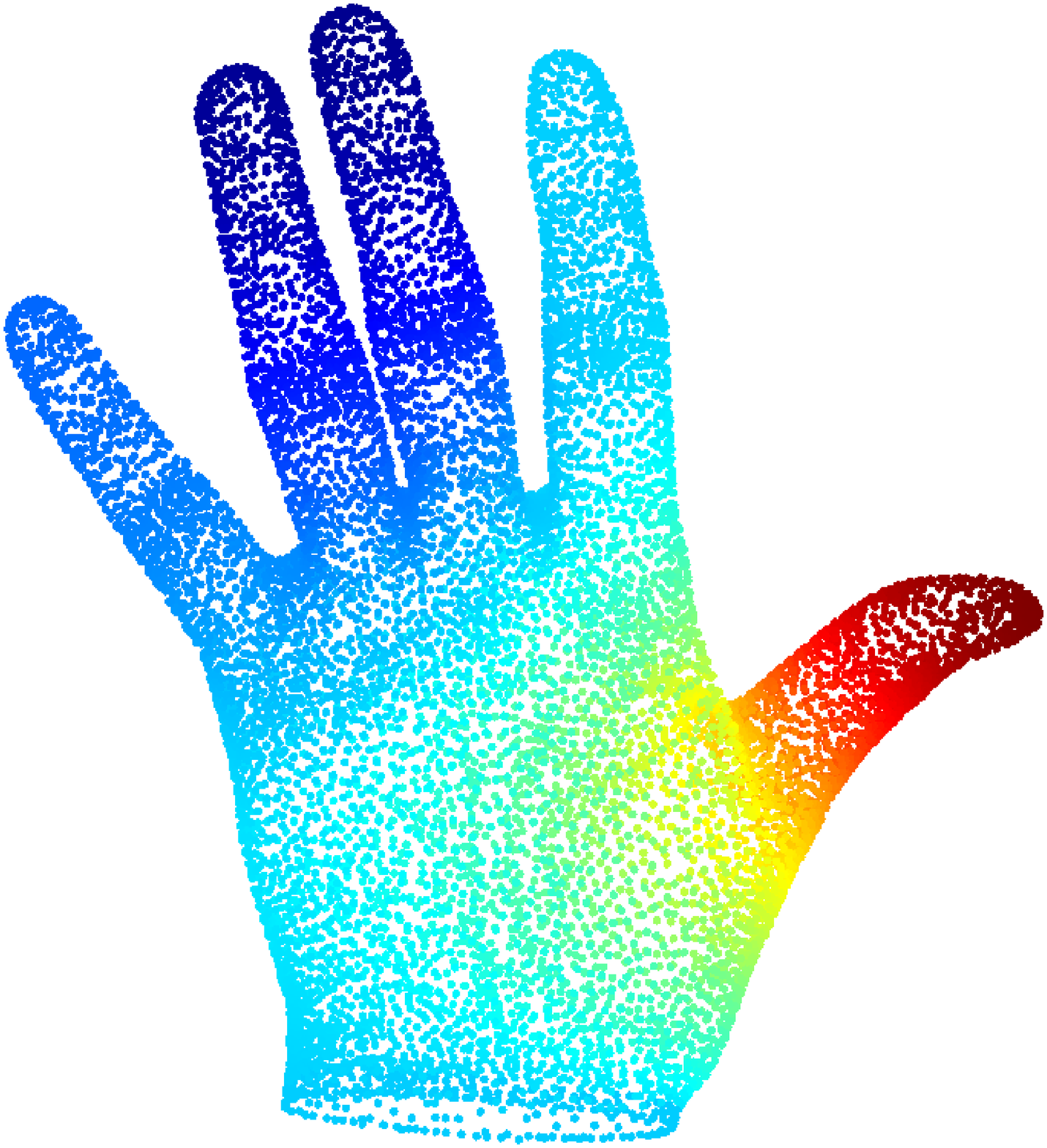} &
\includegraphics[width=0.24\textwidth]{./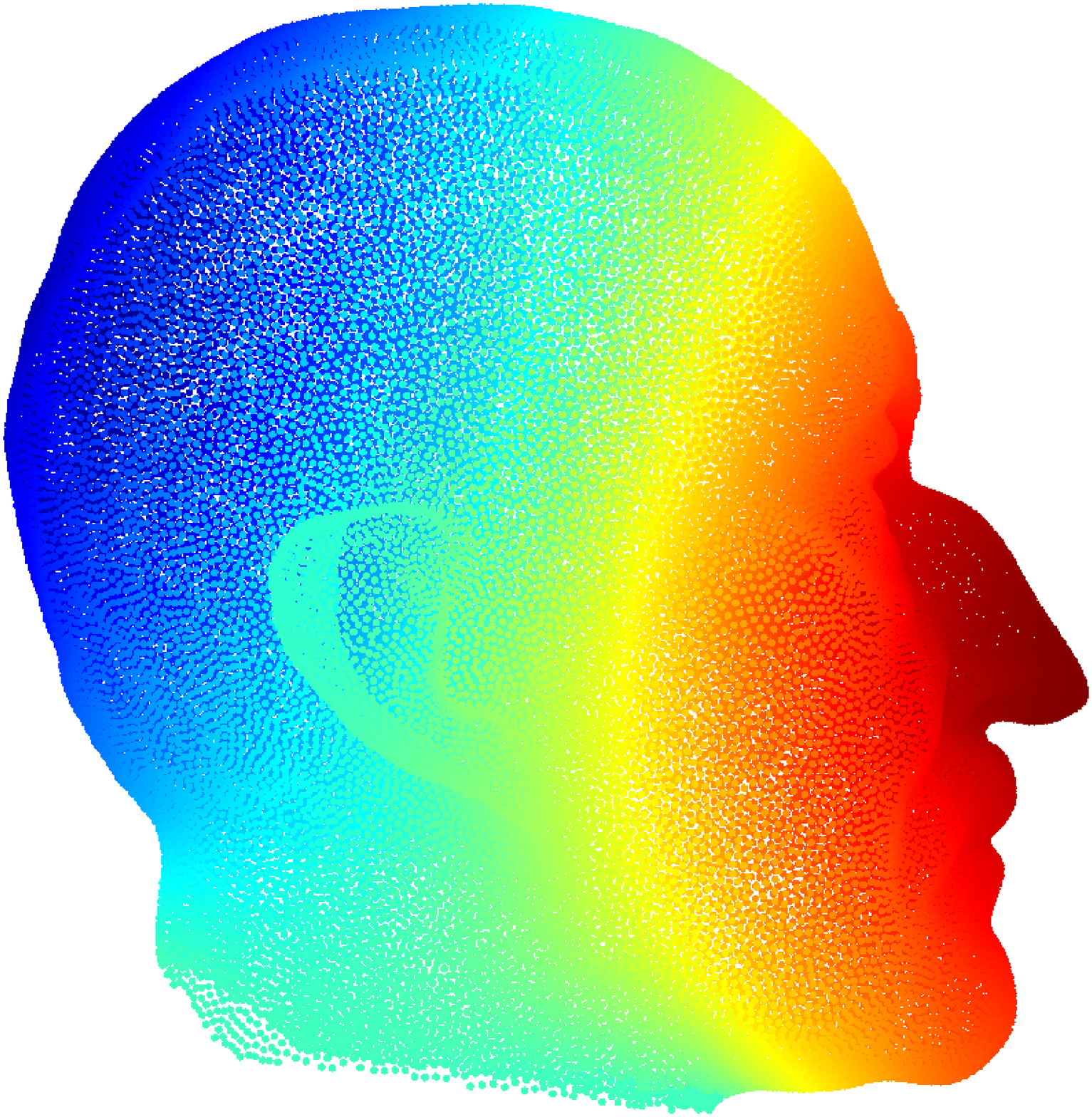} \\
\end{tabular}
\end{center}
\vspace{-4mm}
\caption{The first (upper row) and second (lower row) eigenfunctions with Dirichlet boundary.}
\label{fig:various_eigen}
\end{figure}

\section{Conclusion}
\label{sec:conclusion}

In this paper, we use the volume constraint \cite{Du-SIAM} in the Point Integral method to handle the Dirichlet boundary 
condition. And the convergence is proved both for Poisson equation and eigen problem of Laplace-Beltrami operator 
from point cloud. Our study shows that Point Integral method together with the volume constraint gives an efficient numerical 
approach to solve the Poisson equation with Dirichlet boundary on point cloud. In this paper, we focus on the Poisson equation. 
For other PDEs, we can also use the idea of volume constraint to enforce the Dirichlet boundary condition. The progress will be 
reported in our subsequent papers.

\vspace{0.2in}
\noindent
{\bf Acknowledgments.}
This research was partially supported by NSFC Grant (11201257 to Z.S., 11371220 to Z.S. and J.S. and 11271011 to J.S.), 
and  National Basic Research Program of China (973 Program 2012CB825500 to J.S.).

\appendix
\section{One basic estimates}

\begin{lemma}
  \label{lem:u-boundary}
Let $u(\bx)$ be the solution of \eqref{eq:dirichlet} and $f\in H^1(\M)$, then there is a generic constant $C>0$ and $T_0>0$ only depend on $\M$ and $\p\M$,
for any $t<T_0$, 
\begin{eqnarray}
  \int_{\V_t}|u(\by)|^2\mathd\by\le Ct^{3/2}\|f\|_{H^1(\M)}^2. \nonumber
\end{eqnarray}
\end{lemma}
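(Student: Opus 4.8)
The plan is to exploit that $u$ vanishes on $\p\M$, so on the thin collar $\V_t$ the function $u$ is forced to be small, and to make this quantitative by a Poincar\'e-type inequality in the direction normal to $\p\M$. Since $\p\M$ is compact and $C^\infty$, for $t$ small enough $\V_t$ is contained in a tubular (collar) neighborhood of $\p\M$ diffeomorphic to $\p\M\times[0,c\sqrt t]$ via the normal exponential map; write $\bx=(\by',s)$ with $\by'\in\p\M$ and $s$ the geodesic distance to $\p\M$, and note that the Riemannian volume element is $J(\by',s)\,\mathd\by'\,\mathd s$ with $J$ smooth and $\tfrac12\le J\le 2$ on the collar. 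I would carry out the estimate in two steps: (i) bound $\|u\|_{L^2(\V_t)}$ by $C\sqrt t\,\|\nabla u\|_{L^2(\V_t)}$, and (ii) bound $\|\nabla u\|_{L^2(\V_t)}$ by $Ct^{1/4}\|f\|_{H^1(\M)}$; multiplying the two gives the claimed $t^{3/2}$.

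For step (i): since $u(\by',0)=0$, the fundamental theorem of calculus gives $u(\by',s)=\int_0^s \p_r u(\by',r)\,\mathd r$, hence by Cauchy--Schwarz $|u(\by',s)|^2\le s\int_0^{2\sqrt t}|\nabla u(\by',r)|^2\,\mathd r$ for $0\le s\le 2\sqrt t$; integrating in $s$ over $[0,2\sqrt t]$ and then in $\by'$ over $\p\M$, using the Jacobian bounds, yields $\|u\|_{L^2(\V_t)}^2\le C\,t\,\|\nabla u\|_{L^2(\V_t)}^2$.

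For step (ii): $\nabla u$ need not vanish on $\p\M$, so I instead write $\nabla u(\by',s)=\nabla u(\by',0)+\int_0^s \p_r\nabla u(\by',r)\,\mathd r$, square, and integrate as before to get $\|\nabla u\|_{L^2(\V_t)}^2\le C\sqrt t\,\|\nabla u\|_{L^2(\p\M)}^2+C\,t\,\|u\|_{H^2(\V_t)}^2$. The boundary term is handled by the trace inequality $\|\nabla u\|_{L^2(\p\M)}\le C\|\nabla u\|_{H^1(\M)}\le C\|u\|_{H^2(\M)}$, and by elliptic regularity for the Dirichlet problem \eqref{eq:dirichlet}, $\|u\|_{H^2(\M)}\le C\|f\|_{L^2(\M)}\le C\|f\|_{H^1(\M)}$; the second term obeys $\|u\|_{H^2(\V_t)}\le\|u\|_{H^2(\M)}\le C\|f\|_{H^1(\M)}$. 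Hence $\|\nabla u\|_{L^2(\V_t)}^2\le C\sqrt t\,\|f\|_{H^1(\M)}^2$ once $t$ is small, and combining with step (i) completes the proof.

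The main obstacle is bookkeeping rather than anything deep: one must justify the collar coordinates and the uniform two-sided bound on the Jacobian over a $\sqrt t$-collar, and check that $\V_t$ (defined through Euclidean balls in $\R^d$) is contained in such a geodesic collar of comparable width --- which holds because chordal and geodesic distance to the compact $C^\infty$ boundary are comparable for $t$ small. Everything else (the trace theorem and the $H^2$ elliptic estimate) is standard. As an aside, when $\dim\M\le 3$ one could shortcut step (ii) by $\|\nabla u\|_{L^2(\V_t)}^2\le|\V_t|\,\|u\|_{C^1(\M)}^2\le C\sqrt t\,\|u\|_{H^3(\M)}^2\le C\sqrt t\,\|f\|_{H^1(\M)}^2$ via Sobolev embedding, but the trace-based argument above avoids any dimension restriction.
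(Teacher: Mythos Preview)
Your proof is correct and follows the same overall strategy as the paper's: pass to collar coordinates over $\p\M$, use a Poincar\'e inequality in the normal direction to gain a factor $t$ from the Dirichlet condition, and then gain the remaining $\sqrt t$ on $\|\nabla u\|_{L^2(\V_t)}^2$ via the trace theorem and the elliptic estimate $\|u\|_{H^2(\M)}\le C\|f\|_{L^2(\M)}$.

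The one organizational difference is in your step~(ii). The paper bounds $\int_{\V_t}|\nabla u|^2$ by foliating the collar into hypersurfaces $\Gamma_{s,\tau}$ parallel to $\p\M$ and invoking the trace theorem \emph{uniformly} over this family, picking up $\sqrt t$ from the Jacobian of the collar parametrization. You instead expand $\nabla u(\by',s)=\nabla u(\by',0)+\int_0^s\p_r\nabla u$, which splits the estimate into a single boundary-trace term (carrying the $\sqrt t$) and a volume term controlled by $\|u\|_{H^2(\V_t)}$ (carrying $t$, hence harmless). Your route is arguably cleaner, since it applies the trace inequality only once on the fixed boundary $\p\M$ rather than uniformly over a $t$-dependent family of nearby hypersurfaces; the paper's route, on the other hand, avoids ever writing down second derivatives of $u$ explicitly. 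Either way the ingredients are identical, and your remarks about comparability of chordal and geodesic distance and the Jacobian bounds are exactly the bookkeeping the paper also relies on.
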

\begin{proof} Both $\M$ and $\p \M$ are compact and $C^{\infty}$ smooth. Consequently, it is well known that both $\M$ and $\p \M$ have positive reaches, which means that 
there exists $T_0>0$ only depends on $\M$ and $\p\M$, if $t<T_0$,
$\V_t$ can be parametrized as $(\mathbf{z}(\by),\tau)\in \p\M\times [0,1]$, where $\by=\mathbf{z}(\by)+\tau(\mathbf{z}'(\by)-\mathbf{z}(\by))$
and $\left|\det\left(\frac{\mathd \by}{\mathd (\mathbf{z}(\by),\tau)}\right)\right|\le C\sqrt{t}$ and $C>0$ is a constant only depends on $\M$ and $\p\M$. 
Here $\mathbf{z}'(\by)$ is the intersection point between $\p\M'$ and the line determined by $\mathbf{z}(\by)$ and $\by$. The parametrization is illustrated in 
Fig.\ref{fig:boundary}.
  \begin{figure}
    \centering
    \includegraphics[width=0.6\textwidth]{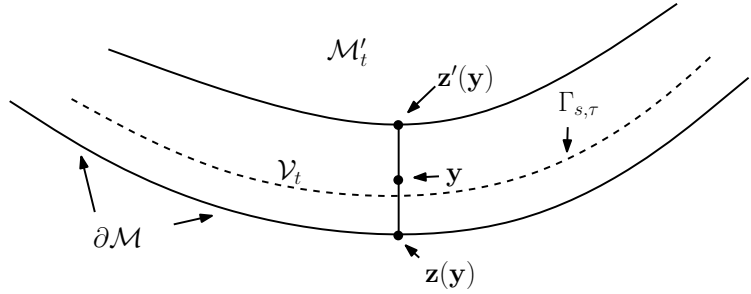}
    \caption{Parametrization of $\mathcal{V}_t$}
    \label{fig:boundary}
  \end{figure}

First, we have
  \begin{eqnarray}
&&  \int_{\V_t}|u(\by)|^2\mathd\by=\int_{\V_t}|u(\by)-u(\mathbf{z}(\by))|^2\mathd\by\nonumber\\
&=&\int_{\V_t}\left|\int_{0}^1\frac{\mathd }{\mathd s}u(\by+s(\mathbf{z}(\by)-\by))\mathd s\right|^2\mathd\by\nonumber\\
&=&\int_{\V_t}\left|\int_{0}^1(\mathbf{z}(\by)-\by)\cdot \nabla u(\by+s(\mathbf{z}(\by)-\by))\mathd s\right|^2\mathd\by\nonumber\\
&\le&Ct\int_{\V_t}\int_{0}^1\left| \nabla u(\by+s(\mathbf{z}(\by)-\by))\right|^2\mathd s\mathd\by\nonumber\\
&\le & Ct\sup_{0\le s\le 1}\int_{\V_t}\left| \nabla u(\by+s(\mathbf{z}(\by)-\by))\right|^2\mathd\by.\nonumber
\end{eqnarray}
Here, we use the fact that $\|\mathbf{z}(\by)-\by\|_2\le 2\sqrt{t}$ to get the second last inequality.

Then, the proof can be completed by following estimation.
\begin{eqnarray}
&&  \int_{\V_t}\left| \nabla u(\by+s(\mathbf{z}(\by)-\by))\right|^2\mathd\by\nonumber\\
&\le& C\sqrt{t}\int_0^1\int_{\p\M}\left| \nabla u(\mathbf{z}(\by)+(1-s)\tau(\mathbf{z}'(\by)-\mathbf{z}(\by)))\right|^2\mathd\mathbf{z}(\by)\mathd \tau\nonumber\\
&\le& C\sqrt{t}\sup_{0\le \tau\le 1}\int_{\p\M}\left| \nabla u(\mathbf{z}+(1-s)\tau(\mathbf{z}'-\mathbf{z}))\right|^2\mathd\mathbf{z}\nonumber\\
&\le& C\sqrt{t}\sup_{0\le \tau\le 1}\int_{\Gamma_{s,\tau}}\left| \nabla u(\tilde{\mathbf{z}})\right|^2\mathd\tilde{\mathbf{z}}\nonumber\\
&\le &C\sqrt{t} \|u\|_{H^2(\M)}^2\le C\sqrt{t} \|f\|_{H^1(\M)}^2,\nonumber
\end{eqnarray}
where $\Gamma_{s,\tau}$ is a $k-1$ dimensinal maniflod given by $\Gamma_{s,\tau}=\left\{\mathbf{z}+(1-s)\tau(\mathbf{z}'-\mathbf{z}):\mathbf{z}\in \p\M\right\}$.
We use the trace theorem to get the second last inequality and the last inequality is due to that $u$ is the solution of the Poisson equation \eqref{eq:dirichlet}.
\end{proof}

\section{Proof of Theorem \ref{thm:dis_error}}

First, we need following important lemma which tells us that the discretized scheme is stable in $l^2$ sense.
\begin{lemma}
For any ${\bf u} = (u_1, \cdots, u_n)^t$ with $u_i=0,\;\bfp_i\in \V_t$, there exist constants $C>0,\, C_0>0$ independent on $t$ so that for sufficient small $t$ 
and $\frac{h}{\sqrt{t}}$ 
\begin{equation}
\frac{1}{t}\sum_{\bfp_i\in\M}\sum_{\bfp_j\in\M}R_t(\bfp_i,\bfp_j)(u_i-u_j)^2V_iV_j
\geq C(1-\frac{C_0h}{\sqrt{t}})\sum_{\bfp_j\in\M} u_i^2V_i.\nonumber
\end{equation}
\label{lem:elliptic_dis}
\end{lemma}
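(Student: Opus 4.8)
The plan is to descend from the point cloud to the continuum and imitate, at the discrete level, the dichotomy used in the proof of Lemma~\ref{lem:elliptic_L_t}. Set $\bar u(\bx)=w_{t,h}(\bx)^{-1}\sum_{\bfp_j\in\M}R_t(\bx,\bfp_j)u_jV_j$ with $w_{t,h}(\bx)=\sum_{\bfp_j\in\M}R_t(\bx,\bfp_j)V_j$; once $h/\sqrt t$ is small, $w_{t,h}$ is pinched between two positive constants $w_{\min},w_{\max}$, and because $R_t(\bx,\cdot)$ is supported in $B(\bx,2\sqrt t)$ the same support argument as in the continuous case shows $\bar u$ vanishes on $\p\M$. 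Writing $\bar v_i=\bar u(\bfp_i)$, the discrete algebraic identity
$$\frac1t\sum_{\bfp_i,\bfp_j\in\M}R_t(\bfp_i,\bfp_j)(u_i-u_j)^2V_iV_j=\frac2t\sum_{\bfp_i\in\M}u_i(u_i-\bar v_i)\,w_{t,h}(\bfp_i)V_i$$
(using only the symmetry of $R_t$, exactly as in Lemma~\ref{lem:elliptic_L_t}) splits the estimate into the case $\sum_i\bar v_i^2V_i\ge\delta^2\sum_iu_i^2V_i$ for $\delta=\delta(w_{\min},w_{\max})$, and its complement. The complementary case is the easy one: there $\sum_i(u_i-\bar v_i)^2V_i\ge(1-\delta)^2\sum_iu_i^2V_i$, so expanding $u_i(u_i-\bar v_i)=(u_i-\bar v_i)^2+\bar v_i(u_i-\bar v_i)$ and applying Cauchy--Schwarz verbatim as in Lemma~\ref{lem:elliptic_L_t} gives a lower bound of order $t^{-1}\sum_iu_i^2V_i$, far stronger than required once $t$ is small.

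The work is in the ``smooth'' case, where it suffices to bound the discrete Dirichlet energy below by $C(1-C_0h/\sqrt t)\sum_i\bar v_i^2V_i$. The route I would take has three steps. First, since $\bar u$ is smooth and has zero trace on $\p\M$, the Poincar\'e inequality gives $\sum_i\bar v_i^2V_i\le(1+O(h/\sqrt t))\|\bar u\|_{L^2(\M)}^2\le C\|\nabla\bar u\|_{L^2(\M)}^2$, the first inequality being $h$-integrability applied to $\bar u^2$. Second, I would bound $\|\nabla\bar u\|_{L^2(\M)}^2$ by the discrete energy: write $\nabla\bar u(\bx)=\sum_j\nabla a_j(\bx)(u_j-\bar u(\bx))$ with $a_j(\bx)=R_t(\bx,\bfp_j)V_j/w_{t,h}(\bx)$ (legitimate since $\sum_j\nabla a_j\equiv0$), use the uniform bound $|\nabla a_j|\le Ct^{-1/2}a_j'$ (where $a_j'$ is the uniform weight over samples in $B(\bx,2\sqrt t)$), Cauchy--Schwarz, the inequality $(u_j-\bar u(\bx))^2\le\sum_k a_k(\bx)(u_j-u_k)^2$, and finally $h$-integrability to turn the $\bx$-integral of a product of two localized kernels into a double sum over sample pairs; this produces a double sum against a kernel of bandwidth $\sim4\sqrt t$ rather than the $2\sqrt t$ of $R_t$. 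Third, a kernel-comparison step shows this wider-bandwidth energy is controlled by the $R_t$-energy, via the chaining inequality $(u_j-u_k)^2\le C\sum_\ell(u_{m_\ell}-u_{m_{\ell+1}})^2$ along a bounded-length chain of sample points spaced $\le\sqrt{2t}$ (such a chain exists by the sampling density, and condition~(c) on $R$ guarantees each short link carries kernel weight $\gtrsim t^{-k/2}$). Assembling the three steps and invoking the smooth-case hypothesis yields the asserted bound.

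The step I expect to be the main obstacle is exactly this kernel comparison together with the bookkeeping of the multiplicative loss: controlling an energy of bandwidth $\sim4\sqrt t$ by the $R_t$-energy with only a $1+O(h/\sqrt t)$ factor requires that each reindexing of the double sum in the chaining argument neither loses powers of $\sqrt t/h$ nor perturbs the $t^{-1}$ prefactor, and the argument must be carried out uniformly down to $\p\M$, where the truncation of $R_t$ and the sampling geometry of the collar $\V_t$ interact. I would keep the boundary bookkeeping honest by routing through the boundary-interaction term, as in Corollary~\ref{cor:l2-inner}. Everything else --- the algebraic identity, the complementary-case Cauchy--Schwarz, and the two Poincar\'e/$h$-integrability applications --- is routine once the dictionary between sums and integrals is in place, and the overall structure closely parallels the by-now-standard Point Integral machinery of \cite{SS14}.
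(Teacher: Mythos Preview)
Your overall architecture---the dichotomy between the ``smooth'' case $\sum_i\bar v_i^2V_i\ge\delta^2\sum_iu_i^2V_i$ and its complement, with the complementary case handled by the same Cauchy--Schwarz manipulation as in Lemma~\ref{lem:elliptic_L_t}---matches the paper exactly. The divergence is entirely in how you handle the smooth case, and specifically the kernel comparison you flag as the main obstacle.

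The paper sidesteps your chaining argument completely by a rescaling trick you are missing: it smooths with $R_{t'}$ for $t'=t/18$ rather than $R_t$. It then invokes Lemma~\ref{lem:elliptic_L_t} at scale $t'$ to bound $\int_\M|u|^2$ by the \emph{continuous} energy $\frac{1}{t'}\int_\M\int_\M R_{t'}(\bx,\by)(u(\bx)-u(\by))^2\,\mathd\bx\,\mathd\by$, and expands $(u(\bx)-u(\by))^2$ by Jensen against the weights $R_{t'}(\bx,\bfp_i)V_i/w_{t',h}(\bx)$ and $R_{t'}(\by,\bfp_j)V_j/w_{t',h}(\by)$. The resulting triple product $R_{t'}(\bx,\bfp_i)R_{t'}(\bx,\by)R_{t'}(\by,\bfp_j)$ is nonzero only when $|\bfp_i-\bfp_j|^2\le 36t'=2t$, hence $|\bfp_i-\bfp_j|^2/(4t)\le 1/2$, and assumption~(c) on $R$ lets one insert the factor $R(|\bfp_i-\bfp_j|^2/(4t))\ge\delta_0$ for free. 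After integrating out $\bx,\by$ this gives $\frac{C}{t}\sum_{i,j}R_t(\bfp_i,\bfp_j)(u_i-u_j)^2V_iV_j$ directly---no wider-bandwidth energy ever appears, and no chaining is needed. The passage from $\int_\M u^2$ to $\sum_i u^2(\bfp_i)V_i$ is then a single $h$-integrability step, producing exactly the $O(h/\sqrt t)$ loss in the statement.

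Your route through $\|\nabla\bar u\|_{L^2}^2$ can in principle be made to reach the same Jensen step, but because you smooth at scale $t$ the support after Jensen has bandwidth $\sim4\sqrt t$, overshooting the region where assumption~(c) bites, and you are forced into chaining. The chaining sketch has a genuine bookkeeping gap: when you write $(u_j-u_k)^2\le C\sum_\ell(u_{m_\ell}-u_{m_{\ell+1}})^2$ and sum over $(j,k)$ with weights $V_jV_k$, you must show that each short link $(m,m')$ is charged total weight at most $CV_mV_{m'}$; this is not automatic from $h$-integrability when the volume weights are non-uniform, and you do not address it. The fix is simply to adopt the paper's rescaling $t\mapsto t/18$ in the definition of $\bar u$, after which your step~2 already lands inside the support of $R_t$ and step~3 becomes a one-line appeal to assumption~(c).
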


\begin{proof}

First, we introduce a smooth function $u$ that approximates $\bfu$ at the samples $P$. 
\begin{eqnarray}
  u(\bx)=\frac{1}{w_{t',h}(\bx)}\sum_{i=1}^n R_{t'}\left(\bx, \bfp_i\right)u_iV_i,\quad \bx\in \M,
\label{eqn:def_dis_u} 
\end{eqnarray}
where $w_{t',h}(\bx)=C_t\sum_{i=1}^nR\left(\frac{|\bx-\bfp_i|^2}{4t'}\right)V_i$ and $t'=t/18$.
Using the condition that $u_i=0,\; \bfp_i\in \V_t$ and $t'=t/18$, we know that
\begin{eqnarray}
  \label{eq:ub}
  u(\bx)=0,\quad \bx\in \V_{t'}.
\end{eqnarray}
Then using Lemma \ref{lem:elliptic_L_t}, we have
\begin{eqnarray}
  \int_{\M}|u(\bx)|^2\mathd \bx\le \frac{C}{t}\int_{\mathcal{M}}  \int_{\mathcal{M}}R_{t'}(\bx,\by) \left(u(\bx)-u(\by)\right)^2\mathd \mu_\bx \mathd\mu_\by.\nonumber
\end{eqnarray}
On the other hand
\begin{align}
\label{eqn:A1}
&\int_{\mathcal{M}}  \int_{\mathcal{M}}R_{t'}(\bx,\by) \left(u(\bx)-u(\by)\right)^2\mathd \mu_\bx \mathd\mu_\by\\
=& \int_{\mathcal{M}}  \int_{\mathcal{M}}R_{t'}(\bx,\by) \left(\frac{1}{w_{t',h}(\bx)}\sum_{i=1}^nR_{t'}(\bx,\bfp_i)u_iV_i
-\frac{1}{w_{t',h}(\by)}\sum_{j=1}^nR_{t'}(\bfp_j,\by)u_jV_j\right)^2\mathd \mu_\bx \mathd\mu_\by\nonumber\\
=& \int_{\mathcal{M}}  \int_{\mathcal{M}}R_{t'}(\bx,\by) \left(\frac{1}{w_{t',h}(\bx)w_{t',h}(\by)}\sum_{i,j=1}^nR_{t'}(\bx,\bfp_i)R_{t'}(\bfp_j,\by)V_i
V_j(u_i-u_j)\right)^2\mathd \mu_\bx \mathd\mu_\by\nonumber\\
\le & \int_{\mathcal{M}}  \int_{\mathcal{M}}R_{t'}(\bx,\by) \frac{1}{w_{t',h}(\bx)w_{t',h}(\by)}\sum_{i,j=1}^nR_{t'}(\bx,\bfp_i)R_{t'}(\bfp_j,\by)V_i
V_j(u_i-u_j)^2\mathd \mu_\bx \mathd\mu_\by\nonumber\\
=&  \sum_{i,j=1}^n\left(\int_{\mathcal{M}}  \int_{\mathcal{M}}\frac{1}{w_{t',h}(\bx)w_{t',h}(\by)}
R_{t'}(\bx,\bfp_i)R_{t'}(\bfp_j,\by)R_{t'}(\bx,\by)\mathd \mu_\bx \mathd\mu_\by\right)V_i
V_j(u_i-u_j)^2.\nonumber
\end{align}
Denote
$$A = \int_{\mathcal{M}}  \int_{\mathcal{M}}\frac{1}{w_{t',h}(\bx)w_{t',h}(\by)}
R_{t'}(\bx,\bfp_i)R_{t'}(\bfp_j,\by)R_{t'}(\bx,\by)\mathd \mu_\bx \mathd\mu_\by$$
and then notice only when $|\bfp_i-\bfp_j|^2\le 36t'$ is $A \neq 0$. For 
$|\bfp_i-\bfp_j|^2\le 36t'$, we have
\begin{align}
\label{eqn:A2}
A \le&\int_{\mathcal{M}}   \int_{\mathcal{M}}
R_{t'}(\bx,\bfp_i)R_{t'}(\bfp_j,\by)R_{t'}(\bx,\by)R\left(\frac{|\bfp_i-\bfp_j|^2}{72t'}\right)^{-1}R\left(\frac{|\bfp_i-\bfp_j|^2}{72t'}\right)\mathd \mu_\bx \mathd\mu_\by\\
\le&\frac{CC_t}{\delta_0}  \int_{\mathcal{M}}   \int_{\mathcal{M}}
R_{t'}(\bx,\bfp_i)R_{t'}(\bfp_j,\by)R\left(\frac{|\bfp_i-\bfp_j|^2}{72t'}\right)\mathd \mu_\bx \mathd\mu_\by\nonumber\\
\le&CC_t  \int_{\mathcal{M}}   \int_{\mathcal{M} }
R_{t'}(\bx,\bfp_i)R_{t'}(\bfp_j,\by)R\left(\frac{|\bfp_i-\bfp_j|^2}{72t'}\right)\mathd \mu_\bx \mathd\mu_\by\le CC_t R\left(\frac{|\bfp_i-\bfp_j|^2}{4t}\right).\nonumber
\end{align}

Combining Equation~\eqref{eqn:A1}, \eqref{eqn:A2} and Lemma~\ref{lem:elliptic_L_t}, we obtain
\begin{eqnarray}
\frac{C}{t}  \sum_{\bfp_i,\bfp_j\in \M}R_t\left(\bfp_i,\bfp_j\right)(u_i-u_j)^2V_iV_j\ge \int_{\mathcal{M}}  |u(\bx)|^2\mathd\mu_\bx
\label{eqn:u_2V_0}
\end{eqnarray}
 Denote
\begin{eqnarray}
B = \int_{\mathcal{M}}\frac{C_t}{w_{t',h}^2(\bx)}R\left(\frac{|\bx-\bfp_i|^2}{4t'}\right)
R\left(\frac{|\bx-\bfp_l|^2}{4t'}\right)\mathd\mu_\bx-\nonumber\\
\sum_{j=1}^n\frac{C_t}{w_{t',h}^2(\bfp_j)}R\left(\frac{|\bfp_j-\bfp_i|^2}{4t'}\right)R\left(\frac{|\bfp_j-\bfp_l|^2}{4t'}\right)V_j\nonumber
\end{eqnarray}
and then $|B|\le \frac{Ch}{t^{1/2}}$. At the same time, notice that only when $|\bfp_i-\bfp_l|^2 <16t'$ is $B\neq 0$. Thus we have 
\begin{eqnarray}
|B| \le \frac{1}{\delta_0} |A| R(\frac{|\bfp_i-\bfp_l|^2}{32t'}), \nonumber
\end{eqnarray}
and
\begin{eqnarray}
\label{eqn:u_2V_2}
&&\left|\int_{\mathcal{M}}u^2(\bx)\mathd\mu_\bx-\sum_{j=1}^nu^2(\bfp_j)V_j\right| \\
&\le&
  \sum_{i,l=1}^n|C_{t}u_iu_lV_iV_l| |A| \nonumber\\
&\le &\frac{Ch}{t^{1/2}} \sum_{i, l=1}^n\left|C_{t} R\left(\frac{|\bfp_i-\bfp_l|^2}{32t'}\right) u_iu_lV_iV_l \right|\nonumber\\
&\le &\frac{Ch}{t^{1/2}} \sum_{i, l=1}^n C_{t} R\left(\frac{|\bfp_i-\bfp_l|^2}{32t'}\right) u_i^2V_iV_l \le
\frac{Ch}{t^{1/2}} \sum_{i=1}^n u_i^2V_i.\nonumber
\end{eqnarray}
Now combining Equation~\eqref{eqn:u_2V_0} and \eqref{eqn:u_2V_2}, we have for small $t$ 
\begin{eqnarray}
  \sum_{i=1}^nu^2(\bfp_i)V_i &=& \int_{\mathcal{M}}  u^2(\bx)\mathd\mu_\bx+\frac{Ch}{t^{1/2}}\sum_{i=1}^n u_i^2V_i \nonumber \\
&\le & \frac{CC_t}{t}  \sum_{i,j=1}^nR\left(\frac{|\bfp_i-\bfp_j|^2}{4t}\right)(u_i-u_j)^2V_iV_j+ \frac{Ch}{t}\sum_{i=1}^n u_i^2V_i.\nonumber
\end{eqnarray}
Here we use the fact that for $t=18t'$ $$R\left(\frac{|\bfp_i-\bfp_j|^2}{4t'}\right) \leq \frac{1}{\delta_0} R\left(\frac{|\bfp_i-\bfp_j|^2}{4t}\right).$$


Let $\delta=\frac{w_{\min}}{2w_{\max}+w_{\min}}$. If $\sum_{i=1}^nu^2(\bfp_i)V_i\ge \delta^2 \sum_{i=1}^nu_i^2V_i$, then we have completed the proof. 
Otherwise, we have
\begin{eqnarray}
\sum_{i=1}^n( u_i-u(\bfp_i))^2V_i = \sum_{i=1}^n u_i^2V_i + \sum_{i=1}^n u(\bfp_i)^2V_i - 2 \sum_{i=1}^n u_iu(\bfp_i)V_i \ge (1- \delta)^2\sum_{i=1}^n u_i^2V_i. \nonumber
\end{eqnarray}
This enables us to prove the ellipticity of $\mathcal{L}$ in the case of $\sum_{i=1}^nu^2(\bfp_i)V_i < \delta^2 \sum_{i=1}^nu_i^2V_i$ as follows. 
\begin{eqnarray}
&&  C_t\sum_{i,j=1}^nR\left(\frac{|\bfp_i-\bfp_j|^2}{4t'}\right)(u_i-u_j)^2V_iV_j\nonumber\\
&=&   2C_t\sum_{i,j=1}^nR\left(\frac{|\bfp_i-\bfp_j|^2}{4t'}\right)u_i(u_i-u_j)V_iV_j\nonumber\\
&=& 2\sum_{i=1}^nu_i(u_i-u(\bfp_i))w_{t,h}(\bfp_i)V_i\nonumber\\
&=& 2\sum_{i=1}^n(u_i-u(\bfp_i))^2w_{t,h}(\bfp_i)V_i+2\sum_{i=1}^nu(\bfp_i)(u_i-u(\bfp_i))w_{t,h}(\bfp_i)V_i\nonumber\\
&\ge& 2\sum_{i=1}^n(u_i-u(\bfp_i))^2w_{t,h}(\bfp_i)V_i-2\left(\sum_{i=1}^nu^2(\bfp_i)w_{t,h}(\bfp_i)V_i\right)^{1/2}\left(\sum_{i=1}^n(u_i-u(\bfp_i))^2w_{t,h}(\bfp_i)V_i\right)^{1/2}\nonumber\\
&\ge& 2w_{\min}\sum_{i=1}^n(u_i-u(\bfp_i))^2V_i-2w_{\max}\left(\sum_{i=1}^nu^2(\bfp_i)V_i\right)^{1/2}\left(\sum_{i=1}^n(u_i-u(\bfp_i))^2V_i\right)^{1/2}\nonumber\\
&\ge& 2(w_{\min}(1-\delta)^2-w_{\max}\delta(1-\delta))\sum_{i=1}^nu_i^2V_i\ge w_{\min}(1-\delta)^2\sum_{i=1}^nu_i^2V_i.\nonumber
\end{eqnarray}

\end{proof}

One direct corollary of above lemma is the boundness of $\left(\sum_{\bfp_i\in \M}u_i^2V_i\right)^{1/2}$.
\begin{corollary}
Suppose $\mathbf{u}=(u_1, \cdots, u_n)^t$ with $u_i=0,\; \bfp_i\in \V_t$ solves the problem~\eqref{eq:dis} with $f\in C(\mathcal{M})$. 
Then there exists a constant $C>0$ such that 
\begin{eqnarray}
  \left(\sum_{\bfp_i\in \M}u_i^2V_i\right)^{1/2}\le C\|f\|_\infty,\nonumber
\end{eqnarray}
provided $t$ and $\frac{h}{\sqrt{t}}$ are small enough.
\label{thm:bound_solution_bfu}
\end{corollary}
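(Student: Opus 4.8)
The plan is to test the discrete equation \eqref{eq:dis} against the discrete solution $\bfu$ itself and combine the resulting identity with the discrete coercivity estimate of Lemma \ref{lem:elliptic_dis}. Concretely, I would multiply the $\bfp_i\in\M'_t$ equation of \eqref{eq:dis} by $u_iV_i$ and sum over all $\bfp_i\in\M$; the points of $\V_t$ contribute nothing since $u_i=0$ there, so this turns the left-hand side into $\frac{1}{t}\sum_{\bfp_i,\bfp_j\in\M}R_t(\bfp_i,\bfp_j)(u_i-u_j)u_iV_iV_j$. Symmetrizing in $i\leftrightarrow j$ (using the symmetry of $R_t$) gives $\frac{1}{2t}\sum_{\bfp_i,\bfp_j\in\M}R_t(\bfp_i,\bfp_j)(u_i-u_j)^2V_iV_j$, which by Lemma \ref{lem:elliptic_dis} is bounded below by $\frac{C}{2}(1-C_0h/\sqrt t)\sum_{\bfp_i\in\M}u_i^2V_i\ge \frac{C}{4}\sum_{\bfp_i\in\M}u_i^2V_i$ once $t$ and $h/\sqrt t$ are small enough.

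For the right-hand side I would use $\bar R\ge 0$ and Cauchy--Schwarz:
\begin{align*}
\left|\sum_{\bfp_i\in\M}u_iV_i\sum_{\bfp_j\in\M}\bar R_t(\bfp_i,\bfp_j)f(\bfp_j)V_j\right|
&\le \|f\|_\infty\sum_{\bfp_i\in\M}|u_i|V_i\left(\sum_{\bfp_j\in\M}\bar R_t(\bfp_i,\bfp_j)V_j\right)\\
&\le C\|f\|_\infty\Big(\sum_{\bfp_i\in\M}u_i^2V_i\Big)^{1/2}\Big(\sum_{\bfp_i\in\M}V_i\Big)^{1/2},
\end{align*}
which requires the inner sum $\sum_{\bfp_j\in\M}\bar R_t(\bfp_i,\bfp_j)V_j$ to be uniformly bounded in $i$ and $t$, and $\sum_{\bfp_i\in\M}V_i$ to be bounded. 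Chaining this upper bound for the right-hand side with the lower bound for the left-hand side yields $\sum_{\bfp_i\in\M}u_i^2V_i\le C\|f\|_\infty\big(\sum_{\bfp_i\in\M}u_i^2V_i\big)^{1/2}$, i.e. the asserted estimate.

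It remains to justify the two auxiliary bounds, and both follow from the $h$-integrable approximation assumption. Applying it to the constant function $1$ gives $\big|\sum_{\bfp_i\in\M}V_i-|\M|\big|<Ch|\M|$, so $\sum_{\bfp_i\in\M}V_i\le C$. Applying it to $\by\mapsto\bar R_t(\bfp_i,\by)$ gives $\big|\sum_{\bfp_j\in\M}\bar R_t(\bfp_i,\bfp_j)V_j-\int_\M\bar R_t(\bfp_i,\by)\mathd\by\big|\le Ch\,|\mathrm{supp}(\bar R_t(\bfp_i,\cdot))|\,\|\bar R_t(\bfp_i,\cdot)\|_{C^1}$; here the support has diameter $O(\sqrt t)$, hence volume $O(t^{k/2})$, while $\|\bar R_t(\bfp_i,\cdot)\|_{C^1}=O(t^{-(k+1)/2})$, so the error is $O(h/\sqrt t)$, and since the normalization of $\bar R$ makes $\int_\M\bar R_t(\bfp_i,\by)\mathd\by=O(1)$, the inner sum is bounded once $h/\sqrt t$ is small.

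I expect the only mildly delicate point to be this last uniform bound on $\sum_{\bfp_j\in\M}\bar R_t(\bfp_i,\bfp_j)V_j$: one must check that the blow-up of $\|\bar R_t(\bfp_i,\cdot)\|_{C^1}$ in $t$ is exactly compensated by the $O(t^{k/2})$ volume of the support, so the integrable-approximation error stays $O(h/\sqrt t)$ and not something larger. Everything else is a routine combination of Lemma \ref{lem:elliptic_dis} with Cauchy--Schwarz.
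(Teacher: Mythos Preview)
Your proposal is correct and follows essentially the same approach as the paper: test \eqref{eq:dis} against $u_iV_i$, symmetrize the left-hand side and invoke Lemma~\ref{lem:elliptic_dis}, then bound the right-hand side by Cauchy--Schwarz together with the uniform bound $\sum_{\bfp_j\in\M}\bar R_t(\bfp_i,\bfp_j)V_j\le C$. The paper's proof is terser (it simply asserts the uniform bound on the kernel sum and applies Cauchy--Schwarz slightly differently, bounding $\sum_i\big(\sum_j\bar R_t f_jV_j\big)^2V_i$ directly rather than first pulling out $\sum_jV_j$), but the ideas are identical; your additional justification of the auxiliary bounds via the $h$-integrable approximation is correct and makes explicit what the paper leaves implicit.
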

\begin{proof}
From the elliptic property of $\mathcal{L}$, we have
\begin{eqnarray}
\sum_{\bfp_i\in \M} u_i^2 V_i &\leq&  \sum_{\bfp_i\in \M} \left( \sum_{\bfp_j \in \M} \rhkpipj f(\bfp_j)V_j \right) u_iV_i \nonumber \\
&\leq& \left(\sum_{\bfp_i\in \M}  u_i^2V_i\right)^{1/2} \left(\sum_{\bfp_i\in \M} 
\left( \|f\|_\infty \sum_{\bfp_j \in \M} \rhkpipj V_j \right)^2    V_i\right)^{1/2}\nonumber \\
&\leq& C\left(\sum_{\bfp_i\in \M}  u_i^2V_i\right)^{1/2} \|f\|_\infty.  \nonumber
\end{eqnarray}
This proves the lemma. 
\end{proof}

We can also get the bound of $u_{t,h}$ which is defined in \eqref{eq:interpolation} as following
\begin{eqnarray*}
  u_{t,h}(\bx)=\left\{\begin{array}{cc}\D
\frac{1}{w_{t,h}(\bx)}\left(\sum_{\bfp_j\in\M}R_t(\bx,\bfp_j)u_jV_j
 +t\sum_{\bfp_j\in\M}\bar{R}_t(\bx,\bfp_j)f(\bfp_j)V_j\right),&\bx\in \M'_t,\\
0,&\bx\in \V_t.
\end{array}\right.
\end{eqnarray*}

\begin{lemma}
  \label{lem:prior-u-dis}
Let $\bfu=[u_1,\cdots,u_n]^t$ be the solution of 
the problem~\eqref{eq:dis} with $f\in C(\M)$ and $u_{t,h}$ be associate smooth function defined in \eqref{eq:interpolation}.
 Then there exists $C>0$ such that 
\begin{eqnarray*}
  \left\|u_{t,h}(\bx)\right\|_{L^2(\M'_t)}&\le& C\|f\|_\infty,\\
\left\|\nabla u_{t,h}(\bx)\right\|_{L^2(\M'_t)}&\le& \frac{C}{\sqrt{t}}\|f\|_\infty
\end{eqnarray*}
\end{lemma}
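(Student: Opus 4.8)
The plan is to write $u_{t,h}=v_1+v_2$ on $\M'_t$, where $v_1(\bx)=\frac{1}{w_{t,h}(\bx)}\sum_{\bfp_j\in\M}R_t(\bx,\bfp_j)u_jV_j$ and $v_2(\bx)=\frac{t}{w_{t,h}(\bx)}\sum_{\bfp_j\in\M}\bar{R}_t(\bx,\bfp_j)f(\bfp_j)V_j$, and to estimate each piece separately in $L^2(\M'_t)$ and in the gradient seminorm. Before that I would record a few elementary facts used throughout. Since $R\ge 0$ has support in $[0,1]$ with the normalization $(4\pi t)^{-k/2}$, and $\M$ is locally almost flat at scale $\sqrt t$, the continuous weight $w_t(\bx)=\int_\M R_t(\bx,\by)\mathd\by$ satisfies $0<w_{\min}\le w_t(\bx)\le w_{\max}$ uniformly in $t$ (the lower bound uses assumption (c)); applying the $h$-integrable approximation property to $\by\mapsto R_t(\bx,\by)$, whose support has measure $O(t^{k/2})$ and whose $C^1$ norm is $O(t^{-k/2-1/2})$, gives $|w_{t,h}(\bx)-w_t(\bx)|=O(h/\sqrt t)$, so $w_{t,h}(\bx)$ is likewise bounded above and below by positive constants once $h/\sqrt t$ is small. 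By the same reasoning $\sum_{\bfp_j\in\M}R_t(\bx,\bfp_j)V_j\le C$, $\sum_{\bfp_j\in\M}\bar{R}_t(\bx,\bfp_j)V_j\le C$ and $\int_\M R_t(\bx,\by)\mathd\bx\le C$ uniformly. Finally, differentiating a kernel costs a factor $t^{-1/2}$: using $R\in C^2$ and $R=0$ on $(1,\infty)$ (so $R'$ is bounded and supported in $[0,1]$), $|\nabla_\bx\bar{R}_t(\bx,\by)|=\frac{\|\bx-\by\|}{2t}R_t(\bx,\by)\le t^{-1/2}R_t(\bx,\by)$, and $|\nabla_\bx R_t(\bx,\by)|\le Ct^{-1/2}$ times a fixed compactly supported kernel in $\by$; hence $\sum_{\bfp_j\in\M}|\nabla_\bx R_t(\bx,\bfp_j)|V_j\le Ct^{-1/2}$, $|\nabla w_{t,h}(\bx)|\le Ct^{-1/2}$ and $\int_\M|\nabla_\bx R_t(\bx,\by)|\mathd\bx\le Ct^{-1/2}$.

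For the $L^2$ bound, Cauchy-Schwarz applied to the sum defining $v_1$ with weights $R_t(\bx,\bfp_j)V_j$ gives $|v_1(\bx)|^2\le\frac{1}{w_{t,h}(\bx)}\sum_{\bfp_j\in\M}R_t(\bx,\bfp_j)u_j^2V_j\le\frac{1}{w_{\min}}\sum_{\bfp_j\in\M}R_t(\bx,\bfp_j)u_j^2V_j$; integrating over $\M'_t$, interchanging sum and integral, and using $\int_\M R_t(\bx,\bfp_j)\mathd\bx\le w_{\max}$ yields $\|v_1\|_{L^2(\M'_t)}^2\le C\sum_{\bfp_j\in\M}u_j^2V_j$, which is $\le C\|f\|_\infty^2$ by Corollary \ref{thm:bound_solution_bfu}. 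For $v_2$ the crude pointwise bound $|v_2(\bx)|\le\frac{t\|f\|_\infty}{w_{\min}}\sum_{\bfp_j\in\M}\bar{R}_t(\bx,\bfp_j)V_j\le Ct\|f\|_\infty$ together with $|\M'_t|\le C$ gives $\|v_2\|_{L^2(\M'_t)}\le Ct\|f\|_\infty\le C\|f\|_\infty$. Adding the two pieces proves the first inequality.

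For the gradient bound I would differentiate each quotient. By the quotient rule $\nabla v_1=\frac{1}{w_{t,h}}\sum_{\bfp_j\in\M}\nabla_\bx R_t(\bx,\bfp_j)u_jV_j-\frac{\nabla w_{t,h}}{w_{t,h}^2}\sum_{\bfp_j\in\M}R_t(\bx,\bfp_j)u_jV_j$. For the first term, Cauchy-Schwarz together with $\sum_{\bfp_j\in\M}|\nabla_\bx R_t(\bx,\bfp_j)|V_j\le Ct^{-1/2}$ bounds its square pointwise by $Ct^{-1/2}\sum_{\bfp_j\in\M}|\nabla_\bx R_t(\bx,\bfp_j)|u_j^2V_j$; integrating over $\M'_t$ and using $\int_\M|\nabla_\bx R_t(\bx,\bfp_j)|\mathd\bx\le Ct^{-1/2}$ and Corollary \ref{thm:bound_solution_bfu} bounds its $L^2(\M'_t)$ norm by $Ct^{-1/2}\|f\|_\infty$. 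The second term is handled identically using $|\nabla w_{t,h}|\le Ct^{-1/2}$, the Cauchy-Schwarz inequality $\bigl(\sum_{\bfp_j\in\M}R_t(\bx,\bfp_j)u_jV_j\bigr)^2\le w_{t,h}(\bx)\sum_{\bfp_j\in\M}R_t(\bx,\bfp_j)u_j^2V_j$, and again Corollary \ref{thm:bound_solution_bfu}, giving the same bound; hence $\|\nabla v_1\|_{L^2(\M'_t)}\le Ct^{-1/2}\|f\|_\infty$. For $\nabla v_2$ the same quotient decomposition, now with $|\nabla_\bx\bar{R}_t(\bx,\by)|\le t^{-1/2}R_t(\bx,\by)$, $|\nabla w_{t,h}|\le Ct^{-1/2}$ and $\sum_{\bfp_j\in\M}\bar{R}_t(\bx,\bfp_j)V_j\le C$, gives the pointwise estimate $|\nabla v_2(\bx)|\le t\cdot Ct^{-1/2}\|f\|_\infty=C\sqrt t\,\|f\|_\infty$, so $\|\nabla v_2\|_{L^2(\M'_t)}\le C\sqrt t\,\|f\|_\infty\le Ct^{-1/2}\|f\|_\infty$ for $t$ small. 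Summing the two pieces gives the second inequality. The computations are routine kernel bookkeeping; the one point needing care is ensuring that every passage from a discrete sum to its continuous counterpart (for $w_{t,h}$, for $\nabla w_{t,h}$, and for the normalizing sums) is legitimately covered by the $h$-integrable approximation hypothesis with the extra $t^{-1/2}$ factors from differentiated kernels correctly accounted for — which is precisely where the smallness of $h/\sqrt t$ enters — since the crucial a priori estimate $\sum_{\bfp_j\in\M}u_j^2V_j\le C\|f\|_\infty^2$ is already available from Corollary \ref{thm:bound_solution_bfu}.
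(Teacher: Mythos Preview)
Your argument is correct and follows the same route as the paper: split $u_{t,h}$ into the $u_j$-part and the $f$-part, control the $u_j$-part in $L^2$ by Cauchy--Schwarz with weights $R_t(\bx,\bfp_j)V_j$ and then invoke the discrete $l^2$ bound $\sum_j u_j^2V_j\le C\|f\|_\infty^2$ from Corollary~\ref{thm:bound_solution_bfu}, bound the $f$-part pointwise, and for the gradient simply observe that differentiating the kernels costs a factor $t^{-1/2}$. The paper's own proof is in fact terser than yours (it only sketches the gradient estimate in words), so your write-up adds detail but no new ideas.
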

\begin{proof}
  First, $\left\|u_{t,h}(\bx)\right\|_{L^2(\M'_t)}$ is bounded.
\begin{eqnarray}
&&\left\|u_{t,h}(\bx)\right\|_{L^2(\M'_t)}^2\nonumber\\
&=&\int_{\M'_t}\frac{1}{w_{t,h}^2(\bx)}\left(\sum_{\bfp_j\in P}R_t(\bx,\bfp_j)u_jV_j
-t\sum_{\bfp_j\in P}\bar{R}_t(\bx,\bfp_j)f_jV_j\right)^2\mathd\bx\nonumber\\
&\le & C\int_{\M'_t}\left(\sum_{\bfp_j\in P}R_t(\bx,\bfp_j)u_jV_j\right)^2\mathd\bx+
Ct^2\int_{\M'_t}\left(\sum_{\bfp_j\in P}\bar{R}_t(\bx,\bfp_j)f_jV_j\right)^2\mathd\bx\nonumber\\
&\le & C\int_{\M'_t}\left(\sum_{\bfp_j\in P}R_t(\bx,\bfp_j)V_j\right)\left(\sum_{\bfp_j\in P}R_t(\bx,\bfp_j)u_j^2V_j\right)\mathd\bx+
Ct^2\|f\|_\infty^2\int_{\M'_t}\left(\sum_{\bfp_j\in P}\bar{R}_t(\bx,\bfp_j)V_j\right)^2\mathd\bx\nonumber\\
&\le & C\sum_{\bfp_j\in P}u_j^2V_j\left(\int_{\M'_t}R_t(\bx,\bfp_j)\mathd\bx\right)+
Ct^2\|f\|_\infty^2\nonumber\\
&\le& C\sum_{\bfp_j\in P}u_j^2V_j+Ct^2\|f\|_\infty^2\le C\|f\|_\infty^2\nonumber
\end{eqnarray}
Using similar arguments, we can get the bound of $\left\|\nabla u_{t,h}(\bx)\right\|_{L^2(\M'_t)}$. From the definition of $u_{t,h}$,
we can see that all derivatives are applied on the kernel functions. The kernel functions are smooth functions, it gives one factor of $\frac{1}{\sqrt{t}}$
after derivative.
\end{proof}

Now, we are ready to prove Theorem \ref{thm:dis_error}.
\begin{proof} {\it of Theorem \ref{thm:dis_error}}

First, we split $L_{t} \left(u_{t,h} - u_{t}\right)$ to three terms, for any $\bx\in\M'_t$,
\begin{eqnarray}
\label{eq:error-dis-split}
&&\quad\quad L_{t} \left(u_{t,h} - u_{t}\right)\\
&=&  L_t(u_{t,h}) - L_{t, h}(u_{t, h})+L_{t, h}(u_{t, h})-L_tu_t\nonumber\\
&=&\left(L'_t(u_{t,h}) - L'_{t, h}(u_{t, h})\right)+\left(\frac{u_{t,h}(\bx)}{t}\left(\int_{\V_t} R_t(\bx,\by)\mathd\by-\sum_{\bfp_i\in \V_t}R_t(\bx,\bfp_i)V_i\right)\right)
\nonumber\\
&&+\left( \int_{\mathcal{M}}\bar{R}_t(\bx,\by)f(\by) - \sum_{\bfp_j\in \M}\bar{R}_t(\bx,\bfp_j)f(\bfp_j)V_j\right).\nonumber
\end{eqnarray}
To get the last equality, we use that $u_t$ and $u_{t,h}$ solve equation~\eqref{eq:integral} and equation~\eqref{eq:dis_interp} respectively.

The second and third terms are easy to bound. By using Lemma \ref{lem:prior-u-dis} and $(P,\mathbf{V})$ is $h$-integrable approximation of $\M$, we have
\begin{eqnarray}
\label{eq:dis-error-l2-0}
  \left\|\frac{u_{t,h}(\bx)}{t}\left(\int_{\V_t} R_t(\bx,\by)\mathd\by-\sum_{\bfp_i\in \V_t}R_t(\bx,\bfp_i)V_i\right)\right\|_{L^2(\M'_t)}
&\le& \frac{Ch}{t^{3/2}}\|f\|_\infty,\\
\label{eq:dis-error-dl2-0}
\quad\quad\quad\left\|\nabla \left(\frac{u_{t,h}(\bx)}{t}\left(\int_{\V_t} R_t(\bx,\by)\mathd\by-\sum_{\bfp_i\in \V_t}R_t(\bx,\bfp_i)V_i\right)\right)\right\|_{L^2(\M'_t)}
&\le& \frac{Ch}{t^2}\|f\|_\infty
\end{eqnarray}
and
\begin{eqnarray}
\label{eq:dis-error-l2-2}
  \left\|\int_{\mathcal{M}}\bar{R}_t(\bx,\by)f(\by) - \sum_{\bfp_j\in \M}\bar{R}_t(\bx,\bfp_j)f(\bfp_j)V_j\right\|_{L^2(\M'_t)}
&\le& \frac{Ch}{\sqrt{t}}\|f\|_{C^1(\M)},\\
\label{eq:dis-error-dl2-2}
\quad\quad\quad\left\|\nabla \left(\int_{\mathcal{M}}\bar{R}_t(\bx,\by)f(\by) - \sum_{\bfp_j\in \M}\bar{R}_t(\bx,\bfp_j)f(\bfp_j)V_j\right)\right\|_{L^2(\M'_t)}
&\le& \frac{Ch}{t}\|f\|_{C^1(\M)}
\end{eqnarray}
The first term of \eqref{eq:error-dis-split} is much more complicated to bound. We split it further to two terms. Denote
\begin{eqnarray}
  a_{t,h}(\bx)&=&\frac{1}{w_{t,h}(\bx)}\sum_{\bfp_j\in \M'_t}R_t(\bx,\bfp_j)u_jV_j,\\
c_{t,h}(\bx)&=&\frac{t}{w_{t,h}(\bx)}\sum_{\bfp_j\in \M'_t}\bar{R}_t(\bx,\bfp_j)f(\bfp_j)V_j, 
\end{eqnarray}
and then $u_{t,h}(\bx) = a_{t, h}(\bx) +c_{t, h}(\bx),\; \bx\in \M'_t$. 

First we upper bound $\|L'_t(u_{t,h}) - L'_{t, h}(u_{t, h})\|_{L^2(\M'_t)}$.
For $c_{t, h}$, we have 
\begin{eqnarray}
&&\left|\left(L'_tc_{t, h} - L'_{t, h}c_{t, h}\right)(\bx)\right|\nonumber \\
&=& \frac{1}{t} \left|\int_{\mathcal{M}'}R_t(\bx,\by)(c_{t,h}(\bx)-c_{t,h}(\by))  \mathd \mu_\by-\sum_{\bfp_j\in \M'_t}R_t(\bx,\bfp_j)(c_{t,h}(\bx)-c_{t,h}(\bfp_j))V_j\right|\nonumber\\
&\le &\frac{1}{t} \left|c_{t,h}(\bx)\right|\left|\int_{\mathcal{M}'}R_t(\bx,\by) \mathd \mu_\by-\sum_{\bfp_j\in \M'_t}R_t(\bx,\bfp_j)V_j\right|\nonumber\\
&&+ \frac{1}{t}\left|\int_{\mathcal{M}'}R_t(\bx,\by)c_{t,h}(\by)  \mathd \mu_\by-\sum_{\bfp_j\in \M'_t}R_t(\bx,\bfp_j)c_{t,h}(\bfp_j)V_j\right|\nonumber\\
&\le &\frac{Ch}{t^{3/2}}\left|c_{t,h}(\bx)\right|+ \frac{Ch}{t^{3/2}} \|c_{t,h}\|_{C^1(\M'_t)}\nonumber\\
&\le &\frac{Ch}{t^{3/2}}t\|f\|_\infty+ \frac{Ch}{t^{3/2}}(t\|f\|_\infty + t^{1/2}\|f\|_{\infty})\le \frac{Ch}{t}\|f\|_{\infty}.\nonumber
\end{eqnarray}
For $a_{t, h}$, we have 
\begin{eqnarray}
\label{eqn:a_1}
&&  \int_{\mathcal{M}'}\left(a_{t,h}(\bx)\right)^2\left|\int_{\mathcal{M}'}R_t(\bx,\by) \mathd \mu_\by-\sum_{\bfp_j\in \M'_t}R_t(\bx,\bfp_j)V_j\right|^2\mathd\mu_\bx\\
&\le & \frac{Ch^2}{t}\int_{\mathcal{M}'}\left(a_{t,h}(\bx)\right)^2\mathd\mu_\bx \nonumber\\
&\le & \frac{Ch^2}{t} \int_{\mathcal{M}'} \left( \frac{1}{w_{t, h}(\bx)} \sum_{\bfp_j\in \M'_t}R_t(\bx,\bfp_j)u_jV_j \right)^2 \mathd\mu_\bx \nonumber \\
&\le & \frac{Ch^2}{t} \int_{\mathcal{M}'} \left( \sum_{\bfp_j\in P}R_t(\bx,\bfp_j)u_j^2V_j  \right) \left( \sum_{\bfp_j\in \M'_t}R_t(\bx,\bfp_j)V_j  \right) \mathd\mu_\bx \nonumber \\
&\le & \frac{Ch^2}{t} \left( \sum_{\bfp_j\in \M'_t}u_j^2V_j \int_{\mathcal{M}'}R_t(\bx,\bfp_j)  \mathd\mu_\bx    \right)  \le  \frac{Ch^2}{t}\sum_{\bfp_j\in \M'_t}u_j^2V_j.\nonumber
\end{eqnarray}
Let 
\begin{eqnarray}
A &=&  C_t\int_{\mathcal{M}'}\frac{1}{w_{t, h}(\by)}R\left(\frac{|\bx-\by|^2}{4t}\right)R\left(\frac{|\bfp_i-\by|^2}{4t}\right) \mathd \mu_\by\nonumber\\
 &-&C_t\sum_{\bfp_j\in \M'_t} \frac{1}{w_{t, h}(\bfp_j)}R\left(\frac{|\bx-\bfp_j|^2}{4t}\right)R\left(\frac{|\bfp_i-\bfp_j|^2}{4t}\right)V_j. \nonumber
\end{eqnarray}
We have $|A|<\frac{Ch}{t^{1/2}}$ for some constant $C$ independent of $t$. In addition, notice that
only when $|\bx-\bfp_i|^2\leq 16t $ is $A\neq 0$, which implies 
\begin{eqnarray}
|A| \leq \frac{1}{\delta_0}|A|R\left(\frac{|\bx-\bfp_i|^2}{32t}\right). \nonumber
\end{eqnarray}
Then we have
\begin{align}
\label{eqn:a_2}
&\int_{\mathcal{M}'}\left|\int_{\mathcal{M}'}R_t(\bx,\by)a_{t,h}(\by)  \mathd \mu_\by-\sum_{\bfp_j\in \M'_t}R_t(\bx,\bfp_j)a_{t,h}(\bfp_j)V_j\right|^2\mathd\mu_\bx\\
=& \int_{\mathcal{M}'}\left(\sum_{\bfp_i\in \M'_t}C_tu_iV_i A \right)^2\mathd\mu_\bx\nonumber\\
\le &\frac{Ch^2}{t} \int_{\mathcal{M}'}\left(\sum_{\bfp_i\in \M'_t} C_t|u_i|V_i R\left(\frac{|\bx-\bfp_i|^2}{32t}\right)  \right)^2 \mathd\mu_\bx \nonumber\\
\le &\frac{Ch^2}{t} \int_{\mathcal{M}'} \left(\sum_{\bfp_i\in \M'_t} C_t R\left(\frac{|\bx-\bfp_i|^2}{32t}\right) u^2_iV_i\right)  
\left(\sum_{\bfp_i\in \M'_t} C_t R\left(\frac{|\bx-\bfp_i|^2}{32t} \right)V_i  \right) \mathd\mu_\bx \nonumber\\
\le &\frac{Ch^2}{t} \sum_{\bfp_i\in \M'_t} u^2_iV_i\left(\int_{\mathcal{M}'} C_t R\left(\frac{|\bx-\bfp_i|^2}{32t}\right)  \mathd\mu_\bx \right)  \le
 \frac{Ch^2}{t} \left(\sum_{\bfp_i\in \M'_t}u_i^2V_i\right). \nonumber
\end{align} 
Combining Equation~\eqref{eqn:a_1},~\eqref{eqn:a_2} and Theorem~\ref{thm:bound_solution_bfu}, 
\begin{eqnarray}
&&\|L'_ta_{t, h} - L'_{t, h}a_{t, h}\|_{L^2(\M)} \nonumber \\
&=&\left(\int_{\M'_t} \left|\left(L'_t(a_{t, h}) - L'_{t, h}(a_{t, h})\right)(\bx)\right|^2 \mathd\mu_\bx\right)^{1/2} \nonumber\\
&\le & \frac{Ch}{t^{3/2}} \left(\sum_{\bfp_i\in \M'_t}u_i^2V_i\right)^{1/2} \leq  \frac{Ch}{t^{3/2}}\|f\|_\infty.\nonumber
\end{eqnarray}
Assembling the parts together, we have the following upper bound.
\begin{eqnarray}
\label{eq:dis-error-l2-1}
&&\|L'_tu_{t, h} - L'_{t, h}u_{t, h}\|_{L^2(\M'_t)}\\
&\le& \|L'_ta_{t, h} - L'_{t, h}a_{t, h}\|_{L^2(\M'_t)}  + \|L'_tc_{t, h} - L'_{t, h}c_{t, h}\|_{L^2(\M'_t)} \nonumber \\
&\le& \frac{Ch}{t^{3/2}}\|f\|_\infty + \frac{Ch}{t}\|f\|_{\infty} \le \frac{Ch}{t^{3/2}} \|f\|_\infty.\nonumber 
\end{eqnarray}

The complete $L^2$ estimate follows from Equation~\eqref{eq:dis-error-l2-0},~\eqref{eq:dis-error-l2-2} and~\eqref{eq:dis-error-l2-1}. 

Next, we turn to upper bound $\|\nabla (L'_tu_{t}-L'_{t,h}u_{t, h})\|_{L^2(\M'_t)}$. 
Consider $\|\nabla (L'_ta_{t, h} - L'_{t, h}a_{t, h})\|_{L_2(\M'_t)}$, it can be splited into the summation of three terms. Next, we estimate these three terms separately. 
The first term is
\begin{eqnarray}
\label{eqn:da_1}
&&  \int_{\mathcal{M}'_t}\left|\nabla a_{t,h}(\bx)\right|^2\left|\int_{\mathcal{M}'_t}R_t(\bx,\by) \mathd \mu_\by-\sum_{\bfp_j\in \M'_t}R_t(\bx,\bfp_j)V_j\right|^2\mathd\mu_\bx\\
&\le& \frac{Ch^2}{t}\int_{\mathcal{M}'_t}\left|\nabla a_{t,h}(\bx)\right|^2\mathd\mu_\bx \nonumber\\
&\le & \frac{Ch^2}{t} \left(\int_{\mathcal{M}'_t} \left| \frac{1}{w_{t, h}(\bx)} \sum_{\bfp_j\in \M'_t}\nabla R_t(\bx,\bfp_j)u_jV_j \right|^2 \mathd\mu_\bx\right.\nonumber\\
&&\left.+\int_{\mathcal{M}'_t} 
\left| \frac{\nabla w_{t,h}(\bx)}{w^2_{t, h}(\bx)} \sum_{\bfp_j\in \M'_t} R_t(\bx,\bfp_j)u_jV_j \right|^2 \mathd\mu_\bx\right)\nonumber\\
&\le & \frac{Ch^2}{t^2} \int_{\mathcal{M}'_t} \left| \sum_{\bfp_j\in \M'_t} R_{2t}(\bx,\bfp_j)u_jV_j \right|^2 \mathd\mu_\bx\nonumber\\
&\le & \frac{Ch^2}{t^2} \int_{\mathcal{M}'_t} \left( \sum_{\bfp_j\in \M'_t}R_{2t}(\bx,\bfp_j)u_j^2V_j  \right) 
\left( \sum_{\bfp_j\in \M'_t}R_{2t}(\bx,\bfp_j)V_j  \right) \mathd\mu_\bx \nonumber \\
&\le & \frac{Ch^2}{t^2} \left( \sum_{\bfp_j\in \M'_t}u_j^2V_j \int_{\mathcal{M}'_t}R_{2t}(\bx,\bfp_j)  \mathd\mu_\bx    \right)  
\le  \frac{Ch^2}{t^2}\sum_{\bfp_j\in \M'_t}u_j^2V_j.
\nonumber
\end{eqnarray}
where $R_{2t}(\bx,\bfp_j)=C_tR\left(\frac{|\bx-\bfp_j|^2}{8t}\right)$. Here we use the assumption that $R(s)>\delta_0$ for all 
$0\le s\le 1/2$.

The second term is
\begin{eqnarray}
\label{eqn:da_2}
&&  \int_{\mathcal{M}'_t}\left| a_{t,h}(\bx)\right|^2\left|\int_{\mathcal{M}'_t}\nabla R_t(\bx,\by) \mathd \mu_\by-\sum_{\bfp_j\in \M'_t}\nabla R_t(\bx,\bfp_j)V_j\right|^2\mathd\mu_\bx\\
&\le & \frac{Ch^2}{t^2}\int_{\mathcal{M}'_t}\left| a_{t,h}(\bx)\right|^2\mathd\mu_\bx \le  \frac{Ch^2}{t^2}\sum_{\bfp_j\in \M'_t}u_j^2V_j.\nonumber
\end{eqnarray}
Let 
\begin{eqnarray}
B &=&  C_t\int_{\mathcal{M}'_t}\frac{1}{w_{t, h}(\by)}\nabla R\left(\frac{|\bx-\by|^2}{4t}\right)R\left(\frac{|\bfp_i-\by|^2}{4t}\right) \mathd \mu_\by\nonumber\\
 &-&C_t\sum_{\bfp_j\in \M'_t} \frac{1}{w_{t, h}(\bfp_j)}\nabla R\left(\frac{|\bx-\bfp_j|^2}{4t}\right)R\left(\frac{|\bfp_i-\bfp_j|^2}{4t}\right)V_j. \nonumber
\end{eqnarray}
We have $|B|<\frac{Ch}{t^{1/2}}$ for some constant $C$ independent of $t$. In addition, notice that
only when $|\bx-\bx_i|^2\leq 16t $ is $B\neq 0$, which implies 
\begin{eqnarray}
|B| \leq \frac{1}{\delta_0}|B|R\left(\frac{|\bx-\bfp_i|^2}{32t}\right). \nonumber
\end{eqnarray}
Then we have the upper bound of the third term
\begin{eqnarray}
\label{eqn:da_3}
&&\int_{\mathcal{M}'_t}\left|\int_{\mathcal{M}'_t}\nabla R_t(\bx,\by)a_{t,h}(\by)  \mathd \mu_\by-\sum_{\bfp_j\in \M'_t}
\nabla R_t(\bx,\bfp_j)a_{t,h}(\bfp_j)V_j\right|^2\mathd\mu_\bx\\
&=& \int_{\mathcal{M}'_t}\left(\sum_{\bfp_i\in \M'_t}C_tu_iV_i B \right)^2\mathd\mu_\bx\nonumber\\
&\le &\frac{Ch^2}{t^2} \int_{\mathcal{M}'_t}\left(\sum_{\bfp_i\in \M'_t} C_t|u_i|V_i R\left(\frac{|\bx-\bfp_i|^2}{32t}\right)  \right)^2 \mathd\mu_\bx \nonumber\\
&\le & \frac{Ch^2}{t^2} \left(\sum_{\bfp_i\in \M'_t}u_i^2V_i\right). 
\nonumber
\end{eqnarray} 
Combining Equation~\eqref{eqn:da_1}, ~\eqref{eqn:da_2} and ~\eqref{eqn:da_3}, we have
\begin{eqnarray}
&&\|\nabla (L'_ta_{t, h} - L'_{t, h}a_{t, h})\|_{L^2(\M'_t)} \nonumber \\
&=&\left(\int_{\M'_t} \left|\left(L_t(a_{t, h}) - L_{t, h}(a_{t, h})\right)(\bx)\right|^2 \mathd\mu_\bx\right)^{1/2} \nonumber\\
&\le & \frac{Ch}{t^{2}}\left(\sum_{\bfp_i\in \M'_t}u_i^2V_i\right)^{1/2}\le \frac{Ch}{t^{2}}\|f\|_{\infty}
\nonumber
\end{eqnarray}
Using a similar argument, we obtain
\begin{eqnarray}
\|\nabla (L'_tc_{t, h} - L'_{t, h}c_{t, h})\|_{L^2(\M'_t)} &\le & \frac{Ch}{t^{3/2}}\|f\|_\infty,\nonumber
\end{eqnarray}
and thus
\begin{eqnarray}
\|\nabla (L'_tu_{t, h} - L'_{t, h}u_{t, h})\|_{L^2(\M'_t)} \le \frac{Ch}{t^{2}}\|f\|_{\infty}.
\label{eq:dis-error-dl2-1}
\end{eqnarray}
At last, we complete the proof using \eqref{eq:dis-error-dl2-0}, \eqref{eq:dis-error-dl2-2} and \eqref{eq:dis-error-dl2-1}
\end{proof}


\bibliographystyle{abbrv}
\bibliography{poisson}

\begin{thebibliography}{10}

\bibitem{AS03}
D.~Adalsteinsson and J.~A. Sethian.
\newblock Transport and diffusion of material quantities on propagating
  interfaces via level set methods.
\newblock {\em J. Comput. Phys.}, 185:271--288, 2003.

\bibitem{WD08}
R.~Barreira, C.~Elliott, and A.~Madzvamuse.
\newblock Modelling and simulations of multi-component lipid membranes and open
  membranes via diffuse interface approaches.
\newblock {\em J. Math. Biol.}, 56:347--371, 2008.

\bibitem{BEM11}
R.~Barreira, C.~Elliott, and A.~Madzvamuse.
\newblock The surface finite element method for pattern formation on evolving
  biological surfaces.
\newblock {\em J. Math. Biol.}, 63:1095--1119, 2011.

\bibitem{belkin2003led}
M.~Belkin and P.~Niyogi.
\newblock Laplacian eigenmaps for dimensionality reduction and data
  representation.
\newblock {\em Neural Computation}, 15(6):1373--1396, 2003.

\bibitem{CFP97}
J.~W. Cahn, P.~Fife, and O.~Penrose.
\newblock A phase-field model for diffusion-induced grain-boundary motion.
\newblock {\em Ann. Statist.}, 36(2):555--586, 2008.

\bibitem{Coifman05geometricdiffusions}
R.~R. Coifman, S.~Lafon, A.~B. Lee, M.~Maggioni, F.~Warner, and S.~Zucker.
\newblock Geometric diffusions as a tool for harmonic analysis and structure
  definition of data: Diffusion maps.
\newblock In {\em Proceedings of the National Academy of Sciences}, pages
  7426--7431, 2005.

\bibitem{Du-SIAM}
Q.~Du, M.~Gunzburger, R.~B. Lehoucq, and K.~Zhou.
\newblock Analysis and approximation of nonlocal diffusion problems with volume
  constraints.
\newblock {\em SIAM Review}, 54:667--696, 2012.

\bibitem{DE-Acta}
G.~Dziuk and C.~M. Elliott.
\newblock Finite element methods for surface pdes.
\newblock {\em Acta Numerica}, 22:289--396, 2013.

\bibitem{EE08}
C.~Eilks and C.~M. Elliott.
\newblock Numerical simulation of dealloying by surface dissolution via the
  evolving surface finite element method.
\newblock {\em J. Comput. Phys.}, 227:9727--9741, 2008.

\bibitem{ES10}
C.~M. Elliott and B.~Stinner.
\newblock Modeling and computation of two phase geometric biomem- branes using
  surface finite elements.
\newblock {\em J. Comput. Phys.}, 229:6585--6612, 2010.

\bibitem{ESSW11}
C.~M. Elliott, B.~Stinner, V.~Styles, and R.~Welford.
\newblock Numerical computation of advection and diffusion on evolving diffuse
  interfaces.
\newblock {\em IMA J. Numer. Anal.}, 31:786--812, 2011.

\bibitem{GT09}
S.~Ganesan and L.~Tobiska.
\newblock A coupled arbitrary lagrangian eulerian and lagrangian method for
  computation of free-surface flows with insoluble surfactants.
\newblock {\em J. Comput. Phys.}, 228:2859--2873, 2009.

\bibitem{JL04}
A.~J. James and J.~Lowengrub.
\newblock A surfactant-conserving volume-of-fluid method for interfacial flows
  with insoluble surfactant.
\newblock {\em J. Comput. Phys.}, 201:685--722, 2004.

\bibitem{Lai13}
R.~Lai, J.~Liang, and H.~Zhao.
\newblock A local mesh method for solving pdes on point clouds.
\newblock {\em Inverse Problem and Imaging}, to appear.

\bibitem{LNR11}
M.~Lenz, S.~F. Nemadjieu, and M.~Rumpf.
\newblock A convergent finite volume scheme for diffusion on evolving surfaces.
\newblock {\em SIAM J. Numer. Anal.}, 49:15--37, 2011.

\bibitem{LSS}
Z.~Li, Z.~Shi, and J.~Sun.
\newblock Point integral method for solving poisson-type equations on manifolds
  from point clouds with convergence guarantees.
\newblock {\em arXiv:1409.2623}.

\bibitem{LuoSW09}
C.~Luo, J.~Sun, and Y.~Wang.
\newblock Integral estimation from point cloud in d-dimensional space: a
  geometric view.
\newblock In {\em Symposium on Computational Geometry}, pages 116--124, 2009.

\bibitem{NMWI11}
M.~P. Neilson, J.~A. Mackenzie, S.~D. Webb, and R.~H. Insall.
\newblock Modelling cell movement and chemotaxis using pseudopod-based
  feedback.
\newblock {\em SIAM J. Sci. Comput.}, 33:1035--1057, 2011.

\bibitem{SS14}
Z.~Shi and J.~Sun.
\newblock Convergence of the point integral method for the poisson equation on
  manifolds i: the neumann boundary.
\newblock {\em arXiv:1403.2141}.

\bibitem{SS-rate}
Z.~Shi and J.~Sun.
\newblock On the convergence of laplacian spectra in point integral method from
  point cloud.
\newblock {\em arXiv:1506.01788}.

\bibitem{XZ03}
J.~J. Xu and H.~K. Zhao.
\newblock An eulerian formulation for solving partial differential equations
  along a moving interface.
\newblock {\em J. Sci. Comput.}, 19:573--594, 2003.

\end{thebibliography}


\end{document}